\documentclass[12pt,a4paper]{article}

\usepackage{amsmath,amstext,amssymb,amscd,euscript}
 \usepackage{graphicx}

\usepackage[russian,english]{babel}
\usepackage[cp1251]{inputenc}

\oddsidemargin=5mm
 \textwidth=155mm
  \textheight=235mm
  \topmargin=-10mm

\sloppy

\newcommand{\Xcomment}[1]{}
\newtheorem{theorem}{Theorem}[section]
\newtheorem{lemma}[theorem]{Lemma}
\newtheorem{corollary}[theorem]{Corollary}
\newtheorem{prop}[theorem]{Proposition}

\newenvironment{proof}{\noindent{\bf Proof}\/}%
{\hfill$\qed$\medskip}

\def\qed{\Box}

\makeatletter \@addtoreset{equation}{section} \makeatother

\newenvironment{numitem1}{\refstepcounter{equation}\begin{enumerate}%
\item[(\thesection.\arabic{equation})]}{\end{enumerate}}

\newcommand{\refeq}[1]{(\ref{eq:#1})}  

 \makeatletter
\renewcommand{\section}{\@startsection{section}{1}{0pt}%
{-3.5ex plus -1ex minus -.2ex}{2.3ex plus .2ex}%
{\normalfont\Large}}
 \makeatother

 \makeatletter
\renewcommand{\subsection}{\@startsection{subsection}{2}{0pt}%
{-3.0ex plus -1ex minus -.2ex}{1.5ex plus .2ex}%
{\normalfont\normalsize\bf}}
 \makeatother

 \newcommand{\SEC}[1]{\ref{sec:#1}}  

\def\Rset{{\mathbb R}}

\def\Zset{{\mathbb Z}}
\def\Qset{{\mathbb Q}}

\def\Ascr{{\cal A}}
\def\Bscr{{\cal B}}

\def\Escr{{\cal E}}

\def\Hscr{{\cal H}}
\def\Iscr{{\cal I}}
\def\Jscr{{\cal J}}

\def\Lscr{{\cal L}}

\def\Pscr{{\cal P}}
\def\Qscr{{\cal Q}}
\def\Rscr{{\cal R}}
\def\Sscr{{\cal S}}
\def\Tscr{{\cal T}}

\def\frakC{\mathfrak{C}}

\def\tilde{\widetilde}
\def\hat{\widehat}
\def\bar{\overline}
\def\eps{\varepsilon}
\def\orar{\overrightarrow}

\def\deltain{\delta^{\rm in}}
\def\deltaout{\delta^{\rm out}}

\def\convh{{\rm conv}}

\def\xmin{x^{\rm min}}
\def\xmax{x^{\rm max}}
\def\ymin{y^{\rm min}}
\def\ymax{y^{\rm max}}

\def\Lrot{\Lscr^{\rm rot}}

\def\Tscrlow{\Tscr^{\rm low}}
\def\Tscrup{\Tscr^{\rm up}}

\def\Pibeg{\Pi^{\rm beg}}
\def\Piend{\Pi^{\rm end}}
\def\Ebeg{E^{\rm beg}}
\def\Eend{E^{\rm end}}

\def\convex{{\rm conv}}
\def\dimen{{\rm dim}}

\def\rest#1{_{\,\vrule height 1.5ex width 0.05em depth 1pt\, #1}}

\def\pic{\pi^{\rm c}}


\begin{document}
\baselineskip=14pt
\parskip=3pt

\title{On stable assignments generated by choice functions of mixed type}

\author{Alexander V.~Karzanov
\thanks{Central Institute of Economics and Mathematics of
the RAS, 47, Nakhimovskii Prospect, 117418 Moscow, Russia; email:
akarzanov7@gmail.com.}
}
\date{}

 \maketitle
\vspace{-0.7cm}
 \begin{abstract}
We consider one variant of stable assignment problems in a bipartite graph endowed with nonnegative capacities on the edges and quotas on the vertices. It can be viewed as a generalization of the stable allocation problem introduced by Ba\H{\i}ou and Balinsky, which arises when strong linear orders of preferences on the vertices in the latter are replaced by weak ones. At the same time, our stability problem can be stated in the framework of a theory by Alkan and Gale on stable schedule matchings generated by choice functions of a wide scope. In our case, the choice functions are of a special, so-called \emph{mixed}, type. 

The main content of this paper is devoted to a study of \emph{rotations} in our mixed model, functions on the edges determining ``elementary'' transformations between close stable assignments. These look more sophisticated compared with rotations in the stable allocation problem (which are generated by simple cycles). We efficiently construct a poset of rotations and show that the stable assignments are in bijection with the so-called closed functions for this poset; this gives rise to a ``compact'' affine representation for the lattice of stable assignments and leads to an efficient method to find a stable assignment of minimum cost.
 \medskip
 
\noindent\emph{Keywords}: stable marriage, allocation, diversification, choice function, rotation
 \end{abstract}


\section{Introduction}  \label{sec:intr}

Since the famous work of Gale and Shapley~\cite{GS} on stable matchings (``marriages'') in bipartite graphs, there have appeared a variety of researches on stability models in graphs. A well-known model among them was introduced by Ba\H{\i}ou and Balinsky~\cite{BB} under the name of \emph{stable allocation problem}. 

In a setting of that problem, there is given a bipartite graph $G=(V,E)$ whose vertices are partitioned into two subsets (color classes), say, $F$ and $W$, interpreted as the sets of \emph{firms} and \emph{workers}. An edge $e\in E$ connecting vertices (or ``agents'') $u$ and $v$ represents a possible ``contract'' between them, and its intensity (or an \emph{assignment} on $e$) is restricted by a nonnegative real upper bound, or \emph{capacity}, $b(e)\in \Rset_+$. Besides, each vertex $v\in V$ is endowed with a quota $q(v)\in\Rset_+$ which restricts the total assignment on the set $E_v$ of edges incident to $v$, and there is a (strong) linear order $>_v$ establishing preferences on the edges in $E_v$, namely, $e>_v e'$ means that ``agent'' $v$ prefers ``contract'' $e$ to ``contract'' $e'$. An assignment (or \emph{allocation}) $x:E\to\Rset_+$ satisfying the capacity and quota constraints is called \emph{stable} if, roughly, there is no edge $e=\{u,v\}$ such that $x(e)$ can be increased, without violation of the capacities and quotas anywhere, possibly simultaneously decreasing $x$ on an edge $e'\in E_u$ with $e'<_u e$ and/or an edge $e''\in E_v$ with $e''<_v e$. 

Ba\H{\i}ou and Balinsky~\cite{BB} proved that a stable allocation always exists and can be found by an efficient algorithm. Also they showed that if the capacities and quotas are integer-valued, then there exists an integral stable allocation, and obtained some other impressive results. (Hereinafter an algorithm dealing with $G=(V,E)$ is said to be \emph{efficient} if it runs in time polynomial in $|V|,|E|$.)

Alkan and Gale~\cite{AG} considered much more general problem in a bipartite graph with capacities $b$ and, possibly, quotas $q$ as above. Here each vertex $v\in V$ is endowed with a \emph{choice function} $C_v$ that maps the set $\Bscr_v:=\{z\in \Rset_+^{E_v}\colon z(e)\le b(e)\; \forall e\in E_v\}$ into itself. The role of $C_v$ is twofold; in the case with quotas, this can be briefly explained as follows. First, when the amount $|z|:=\sum(z(e)\colon e\in E_v)$ exceeds the quota $q(v)$, it reduces $z$ so as to get the equality $|z'|=q(v)$, where $z'=C_v(z)$. Second, it gives a way to compare the vectors in $\Bscr_v$ obeying the quotas, by use of the preference relation $\succeq\,=\,\succeq_v$ defined by $z\succeq z'\Longleftrightarrow C_v(z\vee z')=z$ (where $z\vee z'$ takes the values $\max\{z(e),z'(e)\}$). By imposing to the choice functions $C_v$, $v\in V$, the axioms of \emph{consistence} and \emph{persistence} (or \emph{substitutability}) (which have been known, sometimes under different names, for simpler models, going back to Kelso and Crawford~\cite{KC}, Roth~\cite{roth}, Blair~\cite{blair}, and some other works) and by adding to them the axiom of \emph{size-monotonicity} (cf. Fleiner~\cite{flein}) or, stronger, of \emph{quota-filling}, Alkan and Gale developed a rather powerful and elegant theory of stability for weighted bipartite graphs. A central result in it (generalizing earlier results for particular cases) is that the set $\Sscr$ of stable assignments (or stable schedule matchings, in terminology of~\cite{AG}) is nonempty and forms a distributive lattice under the preference relations $\succeq_F$ determined by the choice functions $C_v$ with $v$ contained in the vertex part $F$ of $G$. (Here $x,y\in \Sscr$ satisfy $x\succeq_F y$ if and only if for each $f\in F$, the restrictions $z,z'$ of $x,y$ to $E_v$ (respectively) satisfy $z\succeq_f z'$.)

As an illustration of the setting with quotas, Alkan and Gale pointed out two special cases (Examples 1,2 in Sect.~2 of~\cite{AG}). One of them is just the above-mentioned stable allocation problem, that we will briefly denote as \emph{SAP}. In this case, for each $v\in V$, the choice function $C_v$ acts on $z\in\Bscr_v$ with $|z|> q(v)$ so as to respect more preferred and ignore less preferred edges in $E_v$. More precisely, $C_v$ selects the element $\hat e\in E_v$ such that $p:=\sum(z(e)\colon e>_v \hat e)< q(v)$ and $p+z(\hat e)\ge q(v)$, preserves the values for all $e>_v \hat e$, reduces to zero the values for all $e<_v \hat e$, and changes the value on $\hat e$ to $q(v)-p$. The second special case, that we abbreviate as \emph{SDP}, deals with the so-called \emph{diversifying} choice function for each $v\in V$, which uniformly reduces values of $z\in \Bscr_v$ with $|z|>q(v)$; more precisely, it computes the number (``height'') $r$ such that $\sum(\min\{z(e), r\}\colon e\in E_v)=q(v)$, and changes $z(e)$ to $\min\{z(e),r\}$, $e\in E_v$. In fact, this corresponds to the equivalence (or disorder) relation between all elements of $E_v$ from the viewpoint of the ``agent'' $v$, in contrast to the allocation case where $E_v$ is totally ordered. A rather surprising fact shown in~\cite{karz2} is that SDP is poor: for any $b$ and $q$, there exists \emph{exactly one} stable solution.

In this paper, we consider a common generalization of SAP and SDP, which is interesting theoretically and may find reasonable applications. It differs from SAP by admitting that for each $v\in V$, the preferences on $E_v$ are established by a \emph{weak} linear order. Equivalently, the set $E_v$ is partitioned into subsets $\pi^1,\ldots,\pi^k$, called \emph{ties}, such that $v$ prefers $e\in\pi^i$ to $e'\in\pi^j$ if $i<j$, and regards $e,e'$ as equivalent if $i=j$. We write $e>_v e'$ if $i<j$, and $e\sim_v e'$ if $i=j$. Then an assignment $x: E\to\Rset_+$ satisfying the capacity and quota constraints as before is stable if, roughly, there is no edge $e=\{u,v\}$ such that $x(e)$ can be increased, without violation of the capacities and quotas anywhere, possibly simultaneously decreasing $x$ on an edge $e'\in E_u$ such that either $e'<_u e$ and $x(e')>0$, or $e'\sim_u e$ and $x'(e)>x(e)$, and/or an edge $e''\in E_v$ such that either $e''<_v e$ and $x(e'')>0$, or $e''\sim_v e$ and $x''(e)>x(e)$. 
Our case is shown to be within the framework of Alkan-Gale's model as well; here for $v\in V$, the choice function $C_v$ respects the assignments on more preferred ties, ignore less preferred ties, and acts as in the diversifying case for one tie in between those (a precise definition will be given later). We refer to a $C_v$ of this sort as a choice function of \emph{mixed type}, and denote the arising stable assignment problem as \emph{SMP}. 

(It maybe of interest to note that the behavior of ``agents'' $v\in V$ in SMP can be expressed, to some extent, in terms of maximizing a certain concave utility function $U(z)$ over $z\in \Rset_+^{E_v}$ (subject to $z\le b\rest{E_v}$ and $|z|\le q(v)$). Considering the ties $\pi^1,\ldots,\pi^k$ for $\ge_v$ as above, one can take as $U(z)$ the function $\sum_{i=1}^k (k+1-i)z(\pi^i))- \eps\sum(z^2(e)\colon e\in E_v)$ for a sufficiently small $\eps>0$ (where $z(\pi^i):=\sum(z(e)\colon e\in\pi^i)$). Here the linear part of $U(z)$ reflects the preferences on the ties, while the quadratic part serves to ``align'' assignments within each tie.)

We will take advantages from both ways to state SMP: via weak linear orders (a \emph{combinatorial setting}), and via choice functions of mixed type (an \emph{analytic setting}). On the second way, relying on Alkan-Gale's theory in~\cite{AG}, we utilize the fact that the set $\Sscr=\Sscr^{\rm mix}(G,b,q,\ge)$ of stable assignments in SMP is nonempty and forms a distributive lattice under the corresponding preference relations on $\Sscr$, and appeal to the construction of the $F$-optimal stable assignment, that we denote as $\xmin$.

The main content of this paper is devoted to a study of \emph{rotations}, ``elementary augmentations'' connecting close stable assignments in SMP, which leads to clarifying the structure of the lattice on $\Sscr$. This is already beyond the content of~\cite{AG}, but is parallel, in a sense, to a study of rotations for SAP in~\cite{DM}. (Note that originally nice properties of rotations and their applications have been revealed in connection with the stable marriage and roommate problems in~\cite{irv,IL,ILG}. One can also mention a recent work~\cite{FZ} on the Boolean case of Alkan-Gale's model (generalizing stable matchings of many-to-many type), where rotations are extensively studied.) 

It turns out that the formation and properties of rotations in SMP are more intricate compared with SAP. Whereas a rotation in SAP is generated by a simple cycle whose edges have weights $\pm 1$, a rotation in SMP is represented by an integer-valued function $\rho$ on the edges of a certain strongly connected graph $L$, and it is possible that $\rho$ takes large values and $L$ has large vertex degrees. As a consequence, SMP with integral $b,q$ need not have an integral stable solution (in contrast to SAP); moreover, the convex hull of $\Sscr$ may have vertices with exponentially large denominators. 

Nevertheless, we can explicitly construct, in time polynomial in $|E|$, the set of all possible rotations $\rho$ in SMP, along with their maximal possible weights $\tau(\rho)$ (that can be used in ``elementary'' transformations $x\rightsquigarrow x+\tau(\rho)\rho$ of stable assignments). Moreover, we efficiently construct a poset of rotations $\rho$ endowed with weights $\tau(\rho)$ and show that the so-called \emph{closed functions} related to this poset one-to-one correspond to the stable assignments in SMP, yielding a sort of ``explicit and compact representation''  for $\Sscr$.

We then take advantages from this bijection between the set $\Sscr$ of stable assignments in SMP and the set $\frakC$ of closed functions for the rotational poset; this is in parallel with the corresponding results on the above-mentioned stability problems in~\cite{DM,FZ} and earlier results on rotations and their applications in stable matching problems of one-to-one and many-to-many types in~\cite{IL,ILG,BAM}. Using the fact that the number of rotations is $O(|E|)$, one can solve, in strongly polynomial time, the weighted strengthening of stable assignment problem: given weights (or \emph{costs}) $c(e)\in\Zset$ of edges $e\in E$, find $x\in\Sscr$ minimizing $cx:=\sum(c(e)x(e)\colon e\in E)$  (similar to results in~\cite{ILG,BAM,DM}). This is due to a reduction to the problem of minimizing a linear function over $\frakC$, which in turn is reduced to the classical minimum cut problem, by Picard's method~\cite{pic}.

As an additional observation, note that once we are able to minimize a linear function over $\Sscr$ in polynomial time, the separation problem over $\Sscr$ is polynomially solvable as well, due to the polynomial equivalence between linear optimization and separation problems established in~\cite{GLS} based on the ellipsoid method. 

(Recall that the \emph{separation problem} for $\Sscr$ is: given $\xi\in\Rset^E$, decide whether $\xi$ belongs to the convex hull $\convh(\Sscr)$ of $\Sscr$, and if not, find a hyperplane in $\Rset^E$ separating $\xi$ from $\Sscr$. Note that since the separation problem for $\Sscr$ is efficiently solvable, it is tempting to ask of a possibility to explicitly characterize facets  of the convex hull $\convh(\Sscr)$ of $\Sscr$. A characterization of this sort is well known for the set of stable marriages in a bipartite graph, due to Vande Vate~\cite{VV}. Another example of availability of a good facet characterization concerns the Boolean case of Alkan--Gale's model for which~\cite{FZ} shows that the convex hull of stable objects is affinely congruent to an appropriate Stanley's order polytope. One more example is the unconstrained case $b\equiv\infty$ of SAP for which an affine congruence to an order polytope is shown in~\cite{MS}. As to our SMP model, the facet characterization issue is open even in the unconstrained case.)

Concerning computational complexity aspects, it should be noted that all algorithms that we present in this paper are, in fact, strongly polynomial, i.e. (roughly) they use standard arithmetic and logic operations, and the number of these is bounded by a polynomial in the \emph{amount} (not the size) of input data; in a sense, they are adjusted to work with real $b,q$ as well.  Moreover, when dealing with integer-valued $b,q$, the memory size needed for storing intermediate data should be bounded by a polynomial in the size of input data. Note that we often do not care of precisely estimating time bounds of the algorithms that we devise and restrict ourselves merely by establishing  their (strongly) polynomial-time complexity; equivalently, we may say that an algorithm takes time polynomial in $|V|,|E|$.
\smallskip

This paper is organized as follows. Section~\SEC{def} contains basic definitions and settings and reviews important results from~\cite{AG} needed to us. Here we recall that in a general case: the set $\Sscr$ of stable assignments is nonempty; the partial orders $\succeq_F$ and $\succeq_W$ on $\Sscr$ that express preferences for the parts $F$ (``firms'') and $W$ (``workers'') are polar to each other, i.e., for $x,y\in\Sscr$, if $x\succeq_F y$ then $y\succeq_W x$, and vice versa; and $(\Sscr,\succeq_F)$ is a distributive lattice. In this lattice, we denote the best assignment for $F$ (and the worst for $W$) by $\xmin$, and the worst assignment for $F$ by $\xmax$. 
In Section~\SEC{begin} we recall the general method from~\cite{AG} to construct a sequence of assignments that converges to $\xmin$. This method may take infinitely many iterations even for our SMP model. For this reason, in the Appendix we devise an efficient modification to find $\xmin$ in SMP.

Section~\SEC{rotat} is devoted to an in-depth study of rotations and related aspects in SMP. Here for the lattice $(\Sscr,\succeq_F)$ and an element $x\in \Sscr-\{\xmax\}$, we consider the set $X^\eps(x)$ of stable assignments $x'$ satisfying $x\succ_F x'$ and $|x-x'|:=\sum(|x(e)-x'(e)|\colon e\in E)=\eps$ for a sufficiently small real $\eps>0$. The set $X^\eps(x)$ forms a polytope whose vertices can be characterized combinatorially. More precisely, we construct a certain directed graph $\orar\Gamma(x)$ such that the maximal strong (viz. strongly connected) components in it are in bijection with the vertices of $X^\eps(x)$. (Here a strong component $L$ of $\orar\Gamma(x)$ is called \emph{maximal} if no other strong component of $\orar\Gamma(x)$ is reachable by a directed path from $L$.) We denote by $\Lrot(x)$ the set of maximal strong components in $\orar\Gamma(x)$ and associate with each $L=(V_L,E_L)\in\Lrot$ a certain integer vector $\rho_L\in \Zset^E$ having the support $E_L$ and forming an integer circulation. 

This vector $\rho_L$ is just what we mean under the \emph{rotation} for $x$ generated by $L\in\Lrot(x)$. One shows that the vectors $x+\lambda\rho_L$, where $L\in\Lrot(x)$ and $\lambda=\eps/|\rho_L|$, are exactly the vertices of $X^\eps(x)$ (cf. Corollary~\ref{cor:vertP}). For each $L\in\Lrot(x)$, we define $\tau(\rho_L)$ to be the maximal value $\lambda\in\Rset_+$ such that $x+\lambda\rho_L$ is stable. For $0<\lambda\le\tau(\rho_L)$, we say that the assignment $x':=x+\lambda\rho_L$ (which is shown to be stable) is obtained from $x$ by the \emph{rotational shift} of weight $\lambda$ along $\rho_L$; when $\lambda=\tau(\rho_L)$, the shift is called \emph{full}. 

Section~\SEC{addit_prop} discusses additional, including computational, aspects of rotations. We explain that for $x\in\Sscr-\{\xmax\}$, the set of rotations $\rho=\rho_L$ applicable to $x$ (i.e. $L\in\Lrot$) and their maximal weights $\tau(\rho)$ can be found in time polynomial in $|E|$ (by solving a certain linear system). Also one shows that any $x\in\Sscr$ can be reached from $\xmin$ by a sequence of $O(|E|)$ rotational shifts. The section finishes with an example of rotation $\rho_L$ taking ``big'' values while $L$ is ``small''; this is extended to a series of rotations whose norms grow exponentially in the number of vertices (in Remark~3). 

Section~\SEC{poset_rot} presents crucial results of this paper (generalizing known properties for SAP and earlier stability problems). We first consider an arbitrary sequence $T$ of stable assignments $x_0,x_1,\ldots,x_N$ such that $x_0=\xmin$, $x_N=\xmax$, and each $x_i$ is obtained from $x_{i-1}$ by a full rotational shift, referring to $T$ as a \emph{route}, and show that the set $\Rscr$ of rotations $\rho$ and their maximal weights $\tau(\rho)$ applied to form a route are the same for all routes (cf. Proposition~\ref{pr:equal_rot}). This set is extended to poset $(\Rscr,\tau,\lessdot)$ in which $\rho,\rho'\in\Rscr$ obey relation $\rho\lessdot\rho'$ if $\rho$ occurs earlier than $\rho'$ in all routes.  One proves that the set $\Sscr$ of stable assignments in SMP is in bijection with the set $\frakC$ of closed functions for $(\Rscr,\tau,\lessdot)$  (cf. Proposition~\ref{pr:biject}). Here $\lambda:\Rscr\to\Rset_+$ with $\lambda\le\tau$ is called \emph{closed} if $\rho\lessdot\rho'$ and $\lambda(\rho')>0$ imply $\lambda(\rho)=\tau(\rho)$. This gives an infinite analog of Birkhoff's representation theorem, as we notice in Remark~4. We further explain that the relation $\lessdot$ can be computed efficiently, by constructing a directed graph $(\Rscr,\Escr)$ where the reachability by directed paths determines $\lessdot$. 

Based on these results, in Section~\SEC{weight}, we discuss an explicit affine representation $x=\xmin+A\lambda$ (where $A$ is an $E\times \Rscr$ matrix) involving stable assignments $x$ and closed functions $\lambda$, and finish with the minimum cost strengthening of SMP. Also we briefly discuss (in Remark~5) interrelations between optimization, separation and facet description issues concerning SMP and earlier stability problems.

In the Appendix, we describe an efficient algorithm to find the $F$-optimal stable assignment $\xmin$ in SMP. It modifies the method from Sect.~\SEC{begin}, by aggregating ``long'' sequences of iterations in the latter; this is based on solving a certain linear program. Also, as a partial alternative, we present (in Remark~6) an efficient and relatively simple ``combinatorial'' algorithm which, given an instance of SMP, recognizes the situation when a stable assignment must attain quotas with equalities in all vertices, and if so, finds an optimal stable assignment. The Appendix finishes with an open question.
 \vspace{-0.3cm}


\section{Definitions, settings, backgrounds}  \label{sec:def}

$\bullet$ ~The stability problem SMP that we deal with is defined as follows. There is given a bipartite graph $G=(V,E)$ in which the set $V$ of vertices is partitioned into two subsets (color classes) $F$ and $W$ whose elements are conditionally called \emph{firms} and \emph{workers}, respectively. The edges $e\in E$ are equipped with \emph{upper bounds}, or \emph{capacities}, $b(e)\in\Rset_+$, and the vertices $v\in V$ with  \emph{quotes} $q(v)\in\Rset_+$. We may assume that all values $b(e)$ and $q(v)$ are positive; otherwise the edge/vertex can be removed without affecting the problem. The set of edges incident to a vertex  $v\in V$ is denoted by $E_v$. 

For each vertex $v\in V$, there are \emph{preferences} on the edges in $E_v$; they are described by a \emph{weak} linear order, a transitive relation $\geq_v$ on pairs in $E_v$ in which different edges $e,e'$ are admitted to satisfy simultaneously both comparisons $e\geq_v e'$ and  $e'\geq_v e$. We write $e>_v e'$ if $e\geq_v e'$ but not $e'\geq_v e$, which means that from the viewpoint of the vertex (``agent'')  $v$, the edge (``contract'') $e$ is  \emph{preferred} to, or  \emph{better} than, the edge  $e'$;  whereas if both  $e\geq_v e'$ and  $e'\geq_v e$ take place, then the edges $e,e'$ are regarded by $v$ as \emph{equivalent}, which is denoted as $e\sim_v e'$. 

Equivalently, the weak order $\geq_v$ can be described via the ordered partition  $\Pi_v=(\pi^1,\pi^2,\ldots,\pi^{k})$ of the set $E_v$, where $e\in \pi^i$ and $e'\in\pi^j$ satisfy $e>_v e'$ if $i<j$, and $e\sim_v e'$ if $i=j$. We refer to $\Pi_v$ as a \emph{cortege}, and to its members $\pi^i$ as \emph{ties}, and may write $\pi^1>_v \pi^2>_v\cdots >_v\pi^k$. When $i<j$, we say that the tie $\pi^i$ is more preferable, or better, or earlier than $\pi^j$ (and $\pi^j$ is less preferable, or worse, or later than $\pi^i$).

Next we associate the weak order $\geq_v$ with an appropriate choice function $C_v$.
 \medskip

\noindent $\bullet$ ~The set  of \emph{admissible assignments} for $(G,b)$ (without involving quotas $q$) is defined to be the box $\Bscr:=\{x\in\Rset_+^E\colon x\le b\}$. For a vertex $v\in V$, the restriction of $x\in \Bscr$ to the edge set $E_v$ is denoted by $x_v$; the set of these restrictions is denoted as $\Bscr_v$. The \emph{choice function} (CF)  $C_v$ is a map of $\Bscr_v$ into itself such that: for each $z\in \Bscr_v$, there holds $C_v(z)\le z$ (as usual for choice functions), and $C_v$ is agreeable with the order $\geq_v$ and the quota $q(v)$ in the following way: 
  \begin{numitem1} \label{eq:z-q}
  \begin{itemize}
\item[(a)] if $|z|\le q(v)$ then $C_v(z)=z$;
\item[(b)] if $|z|\ge q(v)$, then in the cortege $\Pi_v$, we take the tie $\pi^i$ such that $z(\pi^1\cup\cdots\cup \pi^{i-1})< q(v)\le z(\pi^1\cup\cdots\cup \pi^{i})$ and define:
 \begin{equation} \label{eq:Cvz}
  C_v(z)(e):=\begin{cases} 
                   z(e) &\mbox{if $e\in \pi^j$, $j<i$,} \\
                        0 &\mbox{if $e\in \pi^j$, $j>i$,} \\
                        r\wedge z(e) &\mbox{if $e\in\pi^i$,}
                \end{cases}
   \end{equation}
where $r=r^z$ (\emph{the cutting height} in $\pi^i$) satisfies
  $$
  z(\pi^1\cup\cdots\cup \pi^{i-1})+\sum(r\wedge z(e) : e\in \pi^i)=q(v).
  $$
  \end{itemize}
  \end{numitem1}
(Hereinafter, for real-valued functions $a,b$ on a set $S$, we use the standard notation $a\wedge b$ and $a\vee b$ for the functions on $S$ taking values $\min\{a(e),b(e)\}$ and $\max\{a(e),b(e)\}$, $e\in S$, respectively. If $S'$ is a finite subset of $S$, we write $a(S')$ for $\sum(a(e) : e\in S')$. When $S$ is finite, $\sum(|a(e)|\colon e\in S)$ is denoted as $|a|$. So $a(S)=|a|$ if $a$ is nonnegative.)

The tie  $\pi^i$ figured in case~\refeq{z-q}(b) is called \emph{critical} and denoted as $\pic_v(z)$. The set of ties $\pi^i\in \Pi_v$ \emph{better} than $\pic_v(z)$ (i.e. $\pi^i>_v \pic_v(z)$) is denoted by $\Pibeg_v(z)$, and the set of ties $\pi^j$ \emph{worse} than $\pic_v(z)$ (i.e. $\pic_v(z)>\pi^j$)  by $\Piend_v(z)$). 
Also we denote the set of edges occurring in the ties in $\Pibeg_v(x)$ (resp. $\Piend_v(x)$) by $\Ebeg_v(x)$ (resp. $\Eend_v(x)$). Note that $C_v(z)$ takes zero values on all edges in $\Eend_v(z)$.
 
We call $C_v$ defined as in~\refeq{z-q} a choice function of  \emph{mixed type}, or simply a \emph{mixed} CF. Here the term ``mixed'' is justified by the fact that $C_v$ of this sort arises as a generalization of two known cases of choice functions, pointed out as examples in~\cite{AG}.
  \medskip
  
\noindent\textbf{Example~1.} Let $\geq_v$ be a strong linear order on $E_v$; in other words, each tie in the cortege $\Pi_v$ consists of a unique element. Then $C_v$ as above is a CF associated with the stable \emph{allocation} problem of Ba\H{\i}ou and Balinsky~\cite{BB} (denoted as SAP).
  \medskip
 
 \noindent\textbf{Example~2.} If all edges in $E_v$ are equivalent (and therefore $\Pi_v$ has a unique tie), then $C_v$ is called a \emph{diversifying} choice function, following terminology in~\cite{AG}.
  \medskip 

\noindent $\bullet$ ~Like the choice functions in the above examples, a CF of mixed type satisfies the standard axioms of consistence and persistence (in terminology of~\cite{AG}); let us check this for completeness of our description.

 \begin{lemma} \label{lm:axioms}
$C=C_v$ defined in~\refeq{z-q} satisfies the following conditions: for $z,z'\in\Bscr_v$,
 \begin{itemize}
\item[\rm(A1)] if $z\ge z'\ge C(z)$, then $C(z')=C(z)$ \emph{(consistence)};
\item[\rm(A2)] if $z\ge z'$, then $C(z)\wedge z'\leq C(z')$ \emph{(persistence}, or \emph{substitutability)}.
  \end{itemize}
  \end{lemma}
   \begin{proof}
~Let us check~(A1). In case $|z|\le q(v)$, we have $C(z)=z$, and $C(z')=C(z)$ trivially follows. And in case $|z|>q(v)$, consider the tie $\pi^i$ as in~\refeq{z-q}(b). Let $H:=\{e\in \pi^i\colon z(e)\ge r\}$. Then~\refeq{Cvz} shows that $C(z)(e)=z(e)$ for all $e\in\Ebeg_v(z) \cup(\pi^i-H)$;\; $C(z)(e)=r$ for all $e\in H$; and $C(z)(e)=0$ for all $e\in\Eend_v(z)$. Now $z\ge z'\ge C(z)$ implies $z'(e)=z(e)$ for all $e\in \Ebeg_v(z)\cup(\pi^i-H)$, and $z'(e)\ge r$ for all $e\in H$, whence $C(z')=C(z)$ easily follows.

Next we check~(A2). If $|z'|\le q(v)$, then $C(z')=z'\ge z'\wedge C(z)$, as required. Now let $|z'|> q(v)$. Then $z\ge z'$ implies $|z|> q(v)$. Define $\pi^i$ and $r$ as in~\refeq{z-q}(b) for $z$, and define $\pi^{i'}$ and $r'$ as in~\refeq{z-q}(b) for $z'$. Let $H:=\{e\in \pi^i\colon z(e)\ge r\}$ and  $H':=\{e\in\pi^{i'}\colon z'(e)\ge r'\}$. From $z\ge z'$ it follows that $i\le i'$. Suppose that $i<i'$. Then for $e\in \Ebeg_v(z)\cup(\pi^i-H)$, we have $C(z)(e)\wedge z'=z(e)\wedge z'(e)=z'(e)=C(z')(e)$, and for $e\in H$, we have $C(z)(e)\wedge z'(e)=r\wedge z'(e)\le C(z')(e)$. It follows that $C(z)\wedge z'\le C(z')$ (taking into account that $C(z)(e')=0$ for $e'\in\Eend_v(z)$). 

Finally, suppose that $i=i'$. Using $z\ge z'$ and $|C(z)|=|C(z')|=q(v)$, one can conclude that $r'\ge r$ and $H'\subseteq H$. As before, for $e\in \Ebeg_v(z)\cup(\pi^i-H)$, we have $C(z)(e)\wedge z'=z(e)\wedge z'(e)=C(z')(e)$. And for $e\in H$, we have $C(z)(e)\wedge z'(e)=r\wedge z'(e)\le r'\wedge z'(e)\le C(z')(e)$ (using $r'\ge r$ and $H'\subseteq H$). As a result, we obtain the desired relation $C(z)\wedge z'\le C(z')$.  
  \end{proof}

In particular, (A1) provides the equality $C(C(z))=C(z)$ for all $z\in \Bscr_v$. The set of assignments $z\in\Bscr_v$ such that $C(z)=z$ is denoted by $\Ascr_v$ (referring to $\Ascr_v$ as the set of \emph{stationary} assignments on $E_v$). Following a general model of Alkan and Gale, the set $\Ascr_v$ is endowed with the binary relation of ``revealed preferences'' $\succeq\,=\,\succeq_v$, where for $z,z'\in\Ascr_v$, one writes $z\succeq z'$ and says that $z$ is \emph{preferred to}, or \emph{better than}, $z'$ if
  \begin{equation} \label{eq:Czzp}
  C(z\vee z')=z.
  \end{equation}
Relation $\succeq$ is transitive and turns $\Ascr_v$ into a lattice, with join operation $\curlyvee$ viewed as $z\curlyvee z'=C(z\vee z')$, and meet operation $\curlywedge$ viewed as $z\curlywedge z'=C(\bar z\wedge \bar z')$, where for  $z''\in\Ascr_v$, the function (\emph{closure}) $\bar z''$ is defined to be $\sup\{ y\in\Bscr_v : C(y)=z''\}$.

One more axiom imposed on CFs in Alkan--Gale's theory is: 
 \begin{itemize}
\item[\rm(A3)] for $z,z'\in \Bscr_v$, if $z\ge z'$, then $|C_v(z)|\ge |C_v(z')|$ (\emph{size monotonicity}).
 \end{itemize}
A sharper form of~(A3) is the \emph{quota acceptability} axiom which says that $|C(z)|=\min\{|z|,q(v)\}$; this is automatically valid for the mixed CF $C_v$. 
  \medskip 

\noindent $\bullet$ ~For further needs, it is useful to specify the features of relation $\succeq$ in our case. We say that $z\in\Ascr_v$ is \emph{quota filling} if $|z|=q(v)$, and \emph{deficit} if $|z|<q(v)$. For a quota filling $z$, in the critical tie $\pic_v(z)=\pi^i$, we distinguish the subset 
  \begin{equation} \label{eq:HT}
  H_v(z):=\{e\in \pic_v(z) \colon  z(e)=r^z\},
  \end{equation}
called the \emph{head} of $z$. The set of edges occurring in $\Ebeg_v(z)\cup(\pic_v(z)-H_v(z))$ is denoted by $T_v(z)$ and called the \emph{tail} of $z$. When $z$ is deficit, we formally define $H_v(z):=\emptyset$ and $T_v(z):=E_v$. 

It is useful to reformulate the ``revealed preference'' relation for assignments in $\Ascr_v$ in terms of heads and tails, as follows.
  \begin{lemma} \label{lm:zsuccsp} 
{\rm(i)} For $z,z'\in\Ascr_v$,  the relation $z\succeq_v z'$ holds if and only if all edges $e\in T_v(z)$ satisfy $z(e)\ge z'(e)$.
 
{\rm(ii)} if  $z\succ_v z'$ and both $z,z'$ are quota filling, then $\pic_v(z)\ge\pic_v(z')$, and in case $\pic_v(z)=\pic_v(z')$, there hold $H_v(z')\subseteq H_v(z)$ and $r^{z'}>r^z$.
  \end{lemma}
 \begin{proof} ~The inequalities for edges in~(i) easily imply~\refeq{Czzp} if $|z|<q(v)$ (in view of $T_v(z)=E_v$). And if $|z|=q(v)$, then these inequalities imply $C(z\vee z')(e)=C(z)(e)=z(e)$ for all $e\in T_v(z)$. Also $|z|=q(v)$ implies $|C(z)|=q(v)=|C(z\vee z')|$. These relations give $C(z\vee z')(e')=C(z)(e')$ for all $e'\in H_v(z)$, and~\refeq{Czzp} follows. The converse implication in~(i) is easy as well.  

To see~(ii), let $\Pi_v=(\pi^1>\cdots>\pi^k)$ and $\pic_v(z)=\pi^i$. Using the inequalities $z(e)\ge z'(e)$ for $e\in T_v(z)$ (valid by~(i)), we have
  $$
z'(H_v(z))+z'(\Eend_v(z))=q(v)-z'(T_v(z)) 
 \ge q(v)-z(T_v(z))=z(H_v(z)).
 $$
It follows that $\pic_v(z')\in\{\pi^i,\ldots,\pi^k\}$, i.e. $\pic_v(z')\le\pi^i=\pic_v(z)$. If  $\pic_v(z')=\pic_v(z)$ takes place, then $z'(H_v(z))\ge z(H_v(z))=r^z|H_v(z)|$ (since $z'(\Eend_v(z))=0$). Then 
  $$
  r^{z'}\ge\max\{z'(e)\colon e\in H_v(z)\}\ge r^z.
  $$
This and $z'(e)\le z(e)< r^z$ for all $e\in \pic_v(z)-H_v(z)\subseteq T_v(z)$ imply $H_v(z')\subseteq H_v(z)$ and $r^{z'}>r^z$ (the equality $r^{z'}=r^z$ would lead to $z'=z$, contrary to $z'\prec_v z$.)
 \end{proof}

\noindent$\bullet$ ~The following definition is of importance.
 \medskip
 
\noindent\textbf{Definition 1.} For a vertex $v$ and an assignment $z\in\Ascr_v$, an edge $e\in E_v$ is called \emph{interesting} (in the sense that an increase at $e$ could cause the appearance of an assignment better than $z$) if there exists $z'\in\Bscr_v$ such that $z'(e)>z(e)$, $z'(e')=z(e')$ for all $e'\ne e$, and $C_v(z')(e)>z(e)$. (In terminology of~\cite{AG}, such an $e$ is called \emph{non-satiated}.)
 \medskip

For the mixed CF $C_v$, one can see that: 
 \begin{numitem1} \label{eq:int-mixed}
for $z\in\Ascr_v$, an edge $e\in E_v$ with $z(e)<b(e)$ is interesting for $v$ under $z$ if and only if $e$ belongs to the tail $T_v(z)$.
  \end{numitem1}
Indeed, let $z'$ be obtained from $z$ by increasing the value at $e$ by a small amount $\eps>0$. Then $z'\in \Bscr_v$. In case $|z|<q(v)$, we have $e\in T_v(z)$, $z'\in\Ascr_v$, and $e$ is interesting under $z$ (since $C(z')(e)=z'(e)>z(e)$). Now let $|z|=q(v)$. If $e\in T_v(z)$, then $C_v$ does not change $z'$ within the tail $T_v(z)$ and decreases the value on each edge in $H_v(z)$ by $\eps/|H_v(z)|$ (cf.~\refeq{Cvz}); therefore, $C_v(z')(e)=z'(e)>z(e)$, and $e$ is interesting under $z$. And if $e\notin T_v(z)$ (i.e., $e$ is in $H_v(z)$ or in $\Eend_v(z)$), then $C_v$ applied to $z'$ decreases $z'(e)$ by $\eps$. So $C(z')=z$ and $e$ is not interesting under $z$.
\medskip
 
\noindent $\bullet$ ~Now we leave ``individual agents'' and come to assignments on the whole edge set $E$. We define $\Ascr$ to be the set of admissible assignments $x\in\Bscr$ such that for all $v\in V$, the restriction $x_v$ of $x$ to $E_v$ belongs to $\Ascr_v$ (referring to $\Ascr$ as the set of \emph{fully stationary} assignments on $E$). So in our case, $\Ascr$ consists of the assignments respecting both the capacities $b$ and the quotas $q$.

In what follows, an edge connecting vertices $f\in F$ and $w\in W$ is denoted as $fw$.
\medskip

\noindent\textbf{Definition 2.} Let $x\in\Ascr$. An edge $e=fw\in E$ is called \emph{blocking} for $x$ if $e$ is interesting for both vertices $f$ and $w$ (when considering the restrictions $x_f=x\rest{E_f}$ on $\Ascr_f$, and $x_w$ on $\Ascr_w$). An assignment $x$ is called \emph{stable} if it admits no blocking edge.
 \medskip
 
We denote the set of stable assignments by $\Sscr=\Sscr(G,b,q,\ge)$. Using~\refeq{int-mixed}, we can characterize the blocking edges for assignments in $\Ascr$ as follows:

  \begin{numitem1} \label{eq:blocking}
for  $x\in\Ascr$, an edge $e=fw\in E$ is blocking if and only if $x(e)<b(e)$ and $e$ belongs to both tails $T_f(x)$ and $T_w(x)$ (where $T_v(x)$ means $T_v(x_v)$).
 \end{numitem1}

For stable $x,y\in\Sscr$, when the relation $x_f\succeq_f y_f$ takes place for all ``firms'' $f\in F$, we write $x\succeq_F y$; similarly, when $x_w\succeq_w y_w$ holds for all ``workers'' $w\in W$, we write $x\succeq_W y$. According to general results on stability for bipartite graphs shown by Alkan and Gale in~\cite{AG}, subject to axioms~(A1)--(A3), the following properties are valid:
  \begin{numitem1} \label{eq:stab-mix}
the set $\Sscr$ of stable assignments is nonempty, and the relation $\succeq_F$ turns $\Sscr$ into a distributive lattice (with operations $\curlywedge$ and $\curlyvee$ induced by those in the restrictions $\Ascr_f$ for $f\in F$); furthermore:
 \begin{itemize}
\item[(a)] (\emph{polarity}) the order $\succeq_F$ is converse to $\succeq_W$, namely: if $x\succeq_F y$, then $y\succeq_W x$, and vice versa;
\item[(b)] (\emph{size invariance}) for each fixed $v\in V$, the value $|x_v|$ is the same for all $x\in \Sscr$.
 \end{itemize}
 \end{numitem1}
We denote the minimal and maximal elements in $(\Sscr,\succeq_W)$ by $\xmin$ and $\xmax$, respectively (so the former is the best, and the latter is the worst w.r.t. the part $F$). 

Let us say that a vertex $v$ is \emph{fully filled} by $x\in\Sscr$ if the restriction function $x_v$ is quota filling (i.e. $|x_v|=q(v)$), and \emph{deficit} otherwise (i.e. $|x_v|<q(v)$); this does not depend on the choice of stable $x$, by~\refeq{stab-mix}(b). We will use one more property shown in~\cite[Corollary~3]{AG} for the quota-acceptable case (and therefore, applicable to our mixed model); it strengthens~\refeq{stab-mix}(b) for deficit vertices, saying that
  \begin{numitem1} \label{eq:strength}
if $v\in V$ is deficit, then the restriction functions $x_v$ are the same for all $x\in \Sscr$.
 \end{numitem1}


\section{Finding an initial stable assignment}  \label{sec:begin}

In order to construct the stable matching $\xmin$ (which is the best for the part $F$ and the worst for the part $W$) in our mixed problem SMP, one can use a method due to Alkan and Gale applied in~\cite[Sec.~3.1]{AG} to show the existence of a stable assignment in a general case considered there. It iteratively constructs triples of functions $(b^i,x^i,y^i)$ for $i=0,1,\ldots,i,\ldots$\,\,.

In the beginning of the process, one puts $b^0:=b$. A current, $i$-th, iteration considers the function $b^i$ (which is already known) and proceeds as follows. 

At the \emph{first stage} of the iteration, $b^i$ is transformed into $x^i\in\Rset_+^E$, by putting  $x^i_f:=C_f(b^i_f)$ for each $f\in F$. 
(Then $x^i$ satisfies the quotas $q(f)$ for all ``firms'' $f\in F$, when dealing with our setting with quotas; however, $x^i$ can exceed quotas $q(w)$ for some ``workers'' $w\in W$.)

At the \emph{second stage}, the obtained $x^i$ is transformed into $y^i\in\Rset_+^E$, by putting $y^i_w:= C_w(x^i_w)$ for each $w\in W$. (This $y^i$ already satisfies the quotas $q(v)$ for all $v\in V$, but it need not be stable.)

The obtained functions $x^i$ and $y^i$ are then used to generate the upper bounds $b^{i+1}$ for the next iteration, by putting
 \begin{equation} \label{eq:bi}
  b^{i+1}(e):=\begin{cases} 
                   b^i(e) &\mbox{if $y^i(e)=x^i(e)$,} \\
                   y^i(e) &\mbox{if $y^i(e)<x^i(e)$.}
                \end{cases}
   \end{equation}
Then $b^{i+1}$ is transformed into $x^{i+1}$ and, further, into $y^{i+1}$, and so on.

The process terminates if at some iteration, $i$-th say, we obtain the equality $y^i=x^i$ (which implies $b^{i+1}=b^i$, by~\refeq{bi}); then $x^i$ is just the required assignment (see the proposition below). 
In reality, it may happen that the sequence of iterations is infinite. Nevertheless, one can see (by arguing as in~\cite{AG}) that the process always converges. Indeed, the construction shows that for each $i$,
  \begin{equation} \label{eq:converg}
  b^{i}\ge x^i\ge y^i\qquad \mbox{and} \qquad  b^i\ge b^{i+1}\ge 0.
  \end{equation}
Therefore, $b^0,b^1,\ldots$ converge to some function $\hat b\ge 0$. This implies that $x^0,x^1,\ldots$ converge to the function $\hat x$, defined by $\hat x_f:=C_f(\hat b_f)$ for $f\in F$, and, further, that $y^0,y^1,\ldots$ converge to the function $\hat y$ which is equal to $\hat x$ (assuming that all choice functions $C_v$ are continuous). 

\begin{prop} \label{prop:limit} The limit function $\hat x$ is stable and, moreover,
$\hat x$ is optimal for $F$, i.e., $\hat x=\xmin$.
 \end{prop}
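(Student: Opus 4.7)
(i) rationality, (ii) stability, (iii) $F$-optimality.

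\emph{Rationality.} Continuity of $C_f$ applied to $b^i_f \to \hat b_f$ gives $\hat x_f = C_f(\hat b_f)\in\Ascr_f$. Continuity of $C_w$ applied to $x^i_w \to \hat x_w$ combined with the terminal equality $\hat y=\hat x$ gives $\hat x_w = C_w(\hat x_w)\in\Ascr_w$. Hence $\hat x\in\Ascr$, and blocking edges and $\succeq_F$ are well defined for $\hat x$.

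\emph{Stability.} Suppose for contradiction that some $e=fw$ is blocking: by \refeq{blocking}, $\hat x(e)<b(e)$ and $e\in T_f(\hat x)\cap T_w(\hat x)$. Since a mixed CF preserves the values on its tail, $e\in T_f(\hat x)$ together with $\hat x_f=C_f(\hat b_f)$ forces $\hat x(e)=\hat b_f(e)<b(e)=b^0(e)$. The monotone non-increasing sequence $b^i(e)$ therefore decreases by the positive amount $b(e)-\hat b(e)$, so at some iterations one has $y^i(e)<x^i(e)$, i.e., $C_w$ strictly cuts $x^i_w$ at $e$, which means $e$ must lie in $H_w(x^i_w)$ or in a tie strictly worse than $\pi^c_w(x^i_w)$. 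On the other hand, the membership of $e$ in a tie of $w$ is fixed, the convergence $x^i_w\to\hat x_w$ holds, and the strict inequality $\hat x_w(\pi^1\cup\cdots\cup\pi^{i^*-1})<q(w)$ built into \refeq{z-q}(b) ensures that, for all sufficiently large $i$, the critical tie of $x^i_w$ coincides with that of $\hat x_w$ and the cutting height $r^{x^i_w}$ converges to $r^{\hat x_w}$. Combined with $e\in T_w(\hat x)$ (so $\hat x(e)$ is strictly below $r^{\hat x_w}$, or $e$ lies in a strictly earlier tie), this forces $y^i(e)=x^i(e)$ for all large $i$, so $b^i(e)$ stabilises strictly above $\hat b(e)$ --- contradicting $b^i(e)\to\hat b(e)$.

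\emph{$F$-optimality.} For arbitrary $x^*\in\Sscr$ I would prove by induction on $i$, following~\cite{AG}, that $x^i_f\succeq_f x^*_f$ for every $f\in F$. For the base case ($i=0$): since $b=b^0\ge x^*_f\vee x^0_f$ for every $f$, persistence of $C_f$ gives $x^0_f=C_f(b)\wedge(x^0_f\vee x^*_f)\le C_f(x^0_f\vee x^*_f)$ pointwise, and size monotonicity gives $|C_f(x^0_f\vee x^*_f)|\le|C_f(b)|=|x^0_f|$; combined, these force the equality $x^0_f=C_f(x^0_f\vee x^*_f)$, i.e., $x^0_f\succeq_f x^*_f$. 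The inductive step is analogous, using stability of $x^*$ (no blocking edges) to verify that the cuts made by workers at iteration $i$ do not erode the inductive relation on the updated bound $b^{i+1}$. Passing to the limit yields $\hat x_f\succeq_f x^*_f$ for every $f$, hence $\hat x\succeq_F x^*$ and $\hat x=\xmin$.

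\emph{Main obstacle.} The subtlest point is the continuity analysis in the stability argument in degenerate configurations where $\hat x_w(\pi^1\cup\cdots\cup\pi^{i^*})=q(w)$ exactly: then the critical index of $x^i_w$ need not coincide with that of $\hat x_w$ even for large $i$. This is handled by the observation that in this boundary case every edge of $\hat x_w$'s critical tie lies in $T_w(\hat x)\cup H_w(\hat x)$ with no cut applied in the limit, and by continuity of the partial sums $C_w$ also leaves these values intact when applied to $x^i_w$ for all sufficiently large $i$, yielding the required equality $y^i(e)=x^i(e)$.
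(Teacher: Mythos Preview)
The paper does not give its own proof of this proposition: it simply cites Alkan--Gale~\cite{AG}, Theorems~1 and~2. Your proposal is thus an attempt to reconstruct that argument, and its overall architecture (rationality, stability by contradiction on a blocking edge, $F$-optimality by induction on iterations) is indeed the Alkan--Gale outline.

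However, your stability argument contains a genuine gap in its final step. You correctly derive that $e\in T_f(\hat x)$ forces $\hat x(e)=\hat b(e)<b(e)$, and you correctly argue that for all sufficiently large $i$ one has $y^i(e)=x^i(e)$. But your conclusion ``so $b^i(e)$ stabilises \emph{strictly above} $\hat b(e)$'' is false: if $y^i(e)=x^i(e)$ for all $i\ge N$, then rule~\refeq{bi} gives $b^{i+1}(e)=b^i(e)$ for $i\ge N$, so $b^i(e)$ stabilises \emph{at} $b^N(e)=\hat b(e)$, which is perfectly consistent with $b^i(e)\to\hat b(e)$. There is no contradiction. The drop from $b^0(e)=b(e)$ down to $\hat b(e)$ occurred at some early iteration $j<N$ where $C_w$ did cut at $e$; your large-$i$ analysis says nothing about iteration $j$, because $x^j_w$ need not be close to $\hat x_w$.

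What is missing is a monotonicity ingredient that propagates information from early iterations to the limit. In Alkan--Gale this is supplied by showing (via persistence) that $y^{i+1}_w\succeq_w y^i_w$ for every $w$ and $i$: once $C_w$ has cut at $e$ (so $e$ is non-interesting for $w$ under $y^j_w$), the improving sequence $y^j_w\preceq_w y^{j+1}_w\preceq_w\cdots$ forces $e$ to remain non-interesting in the limit $\hat x_w$, contradicting $e\in T_w(\hat x)$. Your $F$-optimality sketch is also too brief at the crucial point: ``the inductive step is analogous'' hides exactly the place where stability of $x^*$ must be invoked to show that whenever a worker cuts an edge, that edge was not interesting for the worker under $x^*$ either, so the bound $b^{i+1}$ still dominates $x^*$ in the required sense.
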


This result was proved in~\cite{AG} for a general case (subject to axioms (A1)--(A3)); see Theorem~1 (for the stability) and Theorem~2 (for the optimality) in that paper. 

Fortunately, for our mixed model the above method can be modified, by aggregating some series of consecutive iterations, so as to obtain an efficient algorithm to find the assignment $\xmin$; its running time is polynomial in $|E|$. We describe this modified method for SMP in the Appendix.


\section{Rotations}  \label{sec:rotat}

Throughout this section, we consider a fixed stable assignment $x\in\Sscr$ for our mixed problem SMP. Our aim is to characterize the set of stable assignments $x'$ that are close to $x$ and satisfy $x\succ_F x'$, namely, the set
  \begin{equation} \label{eq:Xeps}
  X^\eps=X^\eps(x):=\{x'\in\Sscr\colon x'\prec_F x,\; |x'-x|=\eps\}
  \end{equation}
for a sufficiently small real $\eps>0$ (where $|x'-x|$ denotes $\sum(|x'(e)-x(e)|\colon e\in E)$). We will use a technique of \emph{rotations}, generalizing a concept and constructions developed for earlier stability problems, starting with~\cite{IL}. (In particular, it will be shown that $X^\eps$ is a section of the polyhedral cone pointed at $x$ and generated by the rotations regarded as vectors in $\Rset^E$.) First of all we specify some notation needed in what follows. 
\smallskip

\noindent$\bullet$ We denote by $V^=$ the set of \emph{fully filled} vertices $v\in V$, which means that $|x_v|=q(v)$. The sets $V^=\cap F$ and $V^=\cap W$ are denoted by $F^=$ and $W^=$, respectively. (By~\refeq{stab-mix}(b), these sets do not depend on the choice of $x\in\Sscr$. In view of~\refeq{strength}, any transformation $x\rightsquigarrow x'\in\Sscr$ must involve only edges connecting $F^=$ and $W^=$.) The vertices in $V-V^=$ are called \emph{deficit}.
\smallskip

\noindent$\bullet$
Following definitions and notation from Sect.~\SEC{def} and considering $x$ as above, for a fully filled vertex $v\in V^=$, we denote by $\pic_v(x)$, $H_v(x)$, $\Pibeg_v(x)$, $\Piend_v(x)$, respectively, the \emph{critical tie} for $x$ in the cortege $\Pi_v$; the \emph{head} of $x$ in $\pic_v(x)$;  the set of ties in $\Pi_v$ better than $\pic_v$; the set of ties worse than $\pic_v$. Also we denote by $\Ebeg_v(x)$ and $\Eend_v(x)$ the sets of edges contained in the ties of $\Pibeg_v(x)$ and $\Piend_v(x)$, respectively. The complement to $H_v(x)$ in $\Ebeg_v(x)\cup\pic_v(x)$ is what we call the \emph{tail} for $x$ and $v$ and denote by $T_v(x)$.  ((Hereinafter, in brackets  we write $x$ instead of $x_v=x\rest{E_v}$ for short. Note also that since $|x_v|=q(v)$ and $\pic_v(x)$ is critical, we have: $x(\pic_v)>0$, $H_v(x)\ne\emptyset$, and $x$ is zero on all edges in $\Eend_v(x)$.) When $v$ is deficit, one sets $H_v(x):=\emptyset$ and $T_v(x):=E_v$.
\smallskip

\noindent$\bullet$
An edge $e\in E$ is called \emph{saturated} by $x$ if $x(e)=b(e)$, and \emph{unsaturated} otherwise.
\smallskip

\noindent$\bullet$ When otherwise is not explicitly said, we write $\succeq$ for $\succeq_F$.
 \smallskip

We start our description with the observation that property~\refeq{blocking} implies that the stability of $x$ is equivalent to the following property:
  \begin{numitem1} \label{eq:nonfill}
for each unsaturated edge $e=fw\in E$, at least one vertex $v$ among $f$ and $w$ is such that: $v$ is fully filled and $e$ belongs to $H_v(x)\cup \Eend_v(x)$. 
 \end{numitem1}

The construction of rotations is based on an analysis of stable assignments close to $x$. It is convenient to assume that $\eps$ is chosen so that:
  \begin{numitem1} \label{eq:epsE}
$\eps$ is less than any nonzero value among $x(e)$ and $b(e)-x(e)$ for $e\in E$, and less than $x(e)-x(e')$ for edges $e\in H_w(x)$ and $e'\in\pic_w(x)-H_w(x)$ among all $v\in W^=$.
 \end{numitem1}
 
Then useful relations between $x$ and $x'\in X^\eps$ are described in the next two lemmas. 
  \begin{lemma} \label{lm:xwyw}
Let $w\in W^=$ and $x'_w\ne x_w$. Then $\pic_w(x')=\pic_w(x)$, \;$H_w(x')=H_w(x)$, and $H_w(x)$ is exactly the set of edges $e\in E_w$ such that $x'(e)<x(e)$.
  \end{lemma}
 \begin{proof}  
~ By the polarity~\refeq{stab-mix}(b), $x'\prec_F x$ implies $x'_w\succ_w x_w$. Applying Lemma~\ref{lm:zsuccsp}(ii) to $z:=x'_w$ and $z':=x_w$, we have $\pic_w(x')\ge\pic_w(x)$. If $\pic_w(x')>\pic_w(x)$, then taking $e\in H_w(x)$, we obtain $\eps=|x-x'|\ge x(e)-x'(e)=x(e)>\eps$ (in view of~\refeq{epsE}); a contradiction. Therefore, $\pic_w(x')=\pic_w(x)$ and $H_w(x')\supseteq H_w(x)$. Moreover,   $H_w(x')=H_w(x)$ must take place (for otherwise, taking $e\in H_w(x)$ and $e'\in H_w(x')- H_w(x)$, we would have $\eps\ge(x(e)-x'(e))+(x'(e')-x(e')=x(e)-x(e')>\eps$). Finally, for $e\in E_w-H_w(x)$, we have $x'(e)\ge x(e)$ if $e\in T_w(x)$ (by Lemma~\ref{lm:zsuccsp}(i)), and $x'(e)=x(e)=0$ if $e\in\Eend_w(x)$, implying  $H_w(x)=\{e\in E_w\colon x'(e)<x(e)\}$.
\end{proof}

\noindent\textbf{Definition~3.} For $f\in F^=$, let $\hat \Pi_f(x)$ denote the set of ties $\pi^i\in \Pi_f$ such that: (a) the set $D^i=D^i_f(x)$ of unsaturated edges $e=fw\in \pi^i$ such that $e\in T_w(x)$ is nonempty, and (b) there is no tie better or equal to $\pi^i$ that contains an unsaturated edge incident to a deficit vertex in $W$. When $\hat \Pi_f(x)$ is nonempty, we denote the best (most preferred) tie $\pi^i$ in  $\hat \Pi_f(x)$ by $\hat \pi_f(x)$, and denote $D^i$ by $D_f(x)$, called the \emph{potential head} for $f$ and $x$. Otherwise we put $\hat \pi_f(x):=\{\emptyset\}$ and $D_f(x):=\emptyset$. When $\hat \pi_f(x)=\pic_f(x)$, we also denote $D_f(x)$ as $\tilde H_f(x)$ and call it the \emph{reduced head}. 
  \medskip
  
(Note that since $x$ is stable, each edge $e$ satisfying the condition in~(a) (if exists) belongs to $H_f(x)\cup \Eend_f(x)$, cf.~\refeq{nonfill}. Also if an edge $e=fw$ is unsaturated and $w$ is deficit, then $e$ cannot belong to $T_f(x)$. In case $\hat \pi_f(x)=\pic_f(x)$, we have $D_f(x)=\tilde H_f(x)\subseteq H_f(x)$ and any edge $fw$ in $H_f(x)-\tilde H_f(x)$ belongs to $T_w(x)$.)

 \begin{lemma} \label{lm:xfyf}
Let $f\in F^=$ and $x'_f\ne x_f$. Then $\pic_f(x')\le \pic_f(x)$,\; $H_f(x')$ coincides with the set of edges $\{e\in E_f\colon x'(e)>x(e)\}$, and if $\pic_f(x')=\pic_f(x)$ then $H_f(x')\subseteq H_f(x)$. Furthermore, $\pic_f(x')=\hat \pi_f(x)$ and $H_f(x')=D_f(x)$.
  \end{lemma}
  \begin{proof}
~The inequality $\pic_f(x')\le \pic_f(x)$ and the inclusion $H_f(x')\subseteq H_f(x)$ in case $\pic_f(x')=\pic_f(x)$ follow from Lemma~\ref{lm:zsuccsp}(ii) with $z:=x_f$ and $z':=x'_f$. In order to see the equality $H_f(x')=\{e\in E_f\colon x'(e)>x(e)\}$, observe that for an edge $e=fw$, if $x'(e)>x(e)$, then $x'(e)\le x(e)+\eps<b(e)$ (in view of~\refeq{epsE}) and there holds $e\in T_w(x)=T_w(x')$ (by Lemma~\ref{lm:xwyw}). Therefore, $e\notin T_f(x')$ (since $x'(e)<b(e)$ and $x'$ is stable, cp.~\refeq{nonfill}). This implies $e\in H_f(x')$ (taking into account that $e\in \Eend_f(x')$ is impossible since $x'(e)>0$).

Finally, by the stability of $x'$, there is no edge $e=fw$ in $\Ebeg_f(x')\cup (\pic_f-H_f(x'))$ such that $x'(e)<b(e)$ and $e\in T_w(x')$ ($=T_w(x)$) (in particular, $w$ cannot be deficit). Also $H_f(x')$ is nonempty and each edge $e$ in it is unsaturated by $x$ (since $x(e)<x'(e)$). Using these facts, one can conclude that $\pic_f(x')=\hat \pi_f(x)$ and $H_f(x')=D_f(x)$.
  \end{proof}

Lemmas~\ref{lm:xwyw} and~\ref{lm:xfyf} together with property~\refeq{strength} imply that
  \begin{numitem1} \label{eq:no_deficit}
for $x'\in X^\eps$, if $w\in W^=$ and $x'_w\ne x_w$, then no edge in $H_w(x)$ is incident to a deficit vertex in $F$; and if $f\in F^=$ and $x'_f\ne x_f$, then $D_f(x)\ne\emptyset$.
  \end{numitem1}

This can be generalized as follows.
  \begin{lemma} \label{lm:sing_path}
Let $(v_0,e_1,v_1,\ldots, e_k,v_k)$ be a path in $G$ such that: the vertices $v_0,\ldots,v_{k-1}$ are fully filled, and for $i=1,\ldots,k$, the edge $e_i$ connecting $v_{i-1}$ and $v_i$ belongs to $H_{v_{i-1}}(x)$ (when $v_{i-1}\in W^=$) or $D_{v_{i-1}}(x)$ (when $v_{i-1}\in F^=$). Suppose that either $v_k\in F-F^=$, or $v_k\in W$ and $D_{v_{k-1}}(x)=\emptyset$. Then any $x'\in X^\eps(x)$ and $i=1,\ldots,k$ satisfy $x'(e_i)=x(e_i)$.
  \end{lemma}

 \begin{proof}  
~This is shown by induction on $k-i$. If $v_k\in F$ is deficit, we have $x'(e_k)=x(e_k)$ (by~\refeq{strength}), implying $x'(e)=x(e)$ for all edges $e\in E_{v_{k-1}}$ (since $e_k\in H_w(x')=H_w(x)$). And if $v_k\in W$ and $D_{v_{k-1}}(x)=\emptyset$, then $x'_{v_{k-1}}=x_{v_{k-1}}$. See~\refeq{no_deficit}. Now for $i=0,\ldots,k-2$, the equality $x'(e_i)=x(e_i)$ follows from $x'(e_{i+1})=x(e_{i+1})$ and the balance condition $\sum(x'(a)-x(a)\colon a\in E_{v_i})=0$, taking into account that for $a,b\in E_{v_i}$, the values $x'(a)-x(a)$ and $x'(b)-x(b)$ are equal if both $a,b$ belong to the corresponding head ($H_{v_i}(x)$ or $D_{v_i}(x)$), and have the same sign if both $a,b$ are not in this head.
 \end{proof}
   
In what follows, since $x$ is fixed, we may abbreviate notation $H_v(x),  T_v(x), D_v(x)$  to  $H_v, T_v, D_v$. 
 
Lemma~\ref{lm:sing_path} motivates the following \emph{Cleaning procedure} which is intended to extract a set $V^0$ of vertices $v\in F^=\cup W^=$ where no changes of $x$ on $E_v$ are possible.
 \begin{itemize}
\item[(C):] We grow $V^0$, step by step, as follows. Initially, relying on~\refeq{no_deficit}, we insert in $V^0$ each vertex $w\in W^=$ whose head $H_v$ contains an edge incident to a deficit vertex, and each vertex $f\in F^=$ whose potential head $D_f$ is empty. At a current  step, if there appears a (new) vertex $v\in F^=\cup W^=$ whose head ($D_v$ or $H_v$) has an edge incident to a vertex already contained in $V^0$, then we insert $v$ in $V^0$ as well. 
 \end{itemize}

The fully filled vertices $v\in V^=$ occurring in $V^0$ upon termination of this procedure are called \emph{singular}, and the remaining fully filled vertices are called \emph{regular}. We denote $V^+:=V^=-V^0$, \,$F^+:=V^+\cap F$ and $W^+:=V^+\cap W$.    Note that
  \begin{numitem1} \label{eq:f-w}
for each $f\in F^+$, we have $D_f\ne\emptyset$, and all edges $fw\in D_f$ satisfy $w\in W^+$; similarly, for each $w\in W^+$, all edges $fw\in H_w$ satisfy $f\in F^+$.
 \end{numitem1}
 
By reasonings above, all transformations $x\rightsquigarrow x'\in X^\eps(x)$ can be performed only within the sets $H_w$ for regular $w\in W^+$ and $D_f$ for regular $f\in F^+$; we call the edges in these sets \emph{active}. (The heads $H_w$ and $D_f$ may be called active as well.)

(As we have seen, the construction of active sets for $F^+$ looks more involved compared with those for $W^+$. The reason of this asymmetry is caused by their different roles in a small transformation of $x$; namely, to provide more preferred assignments in the part $W$, and less preferred in the part $F$,  while preserving stability, we should slightly decrease $x$ within full current heads for $W$, but increase it either in reduced heads for $F$, or even in ties later than the critical ones where all values are zeros.)

Next we will associate with the regular set $V^+$ two important structures: a graph generated by active edges, and a system of linear equations. We first prefer to describe the second structure. 

By Lemmas~\ref{lm:xwyw} and~\ref{lm:xfyf}, the closeness of $x'$ to $x$ subject to $x\succ_F x'$  corresponds to the property of \emph{homogeneity}, namely:
  \begin{numitem1} \label{eq:homogen}
for each $f\in F^+$, the values $x'(e)-x(e)$ are equal to the same number $\phi_f\ge 0$ for all edges $e\in D_f$; and for each $w\in W^+$, the values  $x(e)-x'(e)$ are equal to the same number $\psi_w\ge 0$ for all $e\in H_w$.
  \end{numitem1}
  
These values $\phi_f$ and $\psi_w$ serve as variables in the following system where we use as coefficients the scalars $h_f:=|D_f|$ and $h_w:=|H_w|$: 
  \begin{gather}
   h_f\phi_f-\sum(\psi_w\colon w\in W^+,\; fw\in E_f-D_f)=0,\quad f\in F^+; \label{eq:lp1}\\
  h_w\psi_w-\sum(\phi_f\colon f\in F^+,\; fw\in E_w-H_w)=0,\quad w\in W^+; \label{eq:lp2}\\
   \sum(h_f\phi_f\colon f\in F^+)+\sum(h_w\psi_w\colon w\in W^+)= \eps;    \label{eq:lp3} \\
      \phi\ge 0,\; \psi\ge 0.  \label{eq:lp4}
   \end{gather}

Here $\phi_f$ means a uniform increase of $x(e)$ at the edges $e\in D_f$, and $\psi_w$ a uniform decrease of $x(e)$ at the edges $e\in H_w$. (Recall that for $x'\in X^\eps$ with $\eps$ small, $H_f(x')=D_f(x)$ for all $f\in F^+$, and $H_w(x')=H_w(x)$ for all $w\in W^+$.)  The equalities in~\refeq{lp1} describe the balance conditions for the vertices $f\in F^+$ (saying that the total increase of $x$ on $D_f$ is compensated by the total decrease on $E_f-D_f$, keeping the corresponding quota fillings), and the ones in~\refeq{lp2} describe similar balance conditions for the vertices $w\in W^+$. And~\refeq{lp3} expresses the condition $|x'-x)=\eps$.
   
Our aim is to show that the set (polytope)
  \begin{equation} \label{eq:polPscr}
  \Pscr^\eps=\Pscr^\eps(x):=\{(\phi,\psi)\}\subset \Rset^{F^+}\times \Rset^{W^+},
  \end{equation}
where $(\phi,\psi)$ runs through the solutions to system~\refeq{lp1}--\refeq{lp4}, matches the set of augmentations $x'-x$ for all $x' \in X^\eps(x)$. 

More precisely, let us associate with $(\phi,\psi)\in\Pscr^\eps$  the following vector $\Delta^{\phi,\psi}\in\Rset^E$:
  \begin{numitem1} \label{eq:Deltapp}
for $e=fw\in E$, define $\Delta^{\phi,\psi}(e):=\phi_f$ if $f\in F^+$ and $e\in D_f$,\; $-\psi_j$ if $w\in W^+$ and $e\in H_w$,\; and 0 otherwise.
  \end{numitem1} 
 \begin{theorem} \label{tm:close}
Let $x\in\Sscr-\{\xmax\}$. Then:

{\rm(a)} for a sufficiently small real $\eps>0$ (in particular, satisfying~\refeq{epsE}), the set $X^\eps(x)$ defined in~\refeq{Xeps} coincides with the polytope $\{ x+\Delta^{\phi,\psi}\colon (\phi,\psi)\in\Pscr^\eps\}$;
  
 {\rm(b)} if $y\in\Sscr$ and $y\prec x$, then for a sufficiently small $\eps>0$, there exists $x'\in X^\eps(x)$ satisfying $y\preceq x'\prec x$.
  \end{theorem}
  
\noindent (Also we shall see that system~\refeq{lp1}--\refeq{lp4} has no solution if and only if $x=\xmax$.)
  
The proof of this theorem is given in the rest of this section. One auxiliary assertion that we use in the proof, which is  interesting in its own right, is as follows.

 \begin{lemma} \label{lm:plus-minus}
Let $A$ and $B$ be two disjoint nonempty subsets of $E$, and let $(J,A\cup B)$ be the subgraph of $G$ induced by $A\cup B$. For a vertex $v\in J$, let $A_v$ ($B_v$) denote the set of edges in $A$ (resp. $B$) incident to $v$. Suppose that
\emph{  \begin{numitem1} \label{eq:ABnonempty} 
\emph{for each edge $fw\in A$, the set $B_w$ is nonempty, and similarly, for each $fw\in B$, the set $A_f$  is nonempty.}
  \end{numitem1} }
\noindent Then there exist \emph{nonzero} functions $\alpha:A\to \Rset_+$ and $\beta:B\to \Rset_+$ such that:

{\rm(i)} for each $v\in J$, ~$\sum(\alpha(e)\colon e\in A_v)=\sum(\beta(e)\colon e\in B_v)$;

{\rm(ii)} for each $f\in J\cap F$, the values $\alpha(e)$ for all edges $e$ in $A_f$ are equal, and similarly, for each $w\in J\cap W$, the values $\beta(e)$ for all $e\in B_w$ are equal.

\end{lemma}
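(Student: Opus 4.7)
The plan is to recast conditions (i) and (ii) as a homogeneous linear system and then apply Perron--Frobenius to an associated column-stochastic matrix.

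Condition (ii) forces $\alpha(fw) = \phi_f$ on $A$ (depending only on the firm end) and $\beta(fw) = \psi_w$ on $B$ (depending only on the worker end), where $\phi \ge 0$ is indexed by $F \cap J$ and $\psi \ge 0$ by $W \cap J$. Writing $h_f := |A_f|$ and $h_w := |B_w|$, condition (i) at each vertex reduces to the balance system
\[
h_f \phi_f = \sum_{fw \in B_f} \psi_w \quad (f \in F \cap J), \qquad h_w \psi_w = \sum_{fw \in A_w} \phi_f \quad (w \in W \cap J).
\]
Hypothesis \refeq{ABnonempty} ensures $h_v > 0$ for every $v \in J$: if, say, $f \in F \cap J$ had $A_f = \emptyset$, then $f$ would be incident only to $B$-edges, and \refeq{ABnonempty} applied to any such edge would force $A_f \ne \emptyset$, a contradiction; symmetrically for workers.

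Next, substitute $x_f := h_f \phi_f$ and $y_w := h_w \psi_w$. The system rewrites as $z = P z$, where $z = (x, y)$ and $P$ is the nonnegative matrix on $J$ with off-diagonal blocks $P(f, w) = 1/h_w$ for $fw \in B$ and $P(w, f) = 1/h_f$ for $fw \in A$ (and all other entries zero). A direct computation shows that each column of $P$ sums to $1$: the column at $w \in W \cap J$ equals $|B_w|/h_w = 1$, and the column at $f \in F \cap J$ equals $|A_f|/h_f = 1$. Hence $P$ is column-stochastic.

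By Perron--Frobenius for nonnegative column-stochastic matrices, $P$ has spectral radius $1$ (witnessed by the left eigenvector $\mathbf{1}^T$) and admits a right eigenvector $z \ge 0$, $z \ne 0$, with $Pz = z$. Recovering $\phi_f := x_f/h_f$ and $\psi_w := y_w/h_w$ yields a nonnegative nonzero solution of the balance system. Neither $\phi$ nor $\psi$ can vanish identically: if $\phi \equiv 0$, then the second set of equations forces $\psi \equiv 0$, contradicting $z \ne 0$, and symmetrically for $\psi$. Thus $\alpha := \phi$ on $A$ and $\beta := \psi$ on $B$ give the desired functions. The only nontrivial step is spotting the column-stochastic structure of $P$, which hinges on the identities $h_f = |A_f|$ and $h_w = |B_w|$; after that, Perron--Frobenius closes the argument immediately.
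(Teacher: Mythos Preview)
Your proof is correct. Both your argument and the paper's hinge on a fixed-point principle, but they invoke different theorems. The paper defines two averaging operators $p:\Rset_+^{J_F}\to\Rset_+^{J_W}$ and $q:\Rset_+^{J_W}\to\Rset_+^{J_F}$, checks that the composition $r=q\circ p$ maps a simplex $Q\subset\Rset_+^{J_F}$ into itself (this is where the column-sum-one computation appears in disguise), and then applies Brouwer's fixed-point theorem. You instead assemble the full system into a single column-stochastic matrix $P$ on $J$ and appeal to Perron--Frobenius to produce a nonnegative eigenvector for the spectral radius $1$. Your route is arguably more tailored to the linear structure: once column-stochasticity is observed, the conclusion is immediate, and no continuity or convexity argument is needed. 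The paper's Brouwer-based argument is slightly more hands-on but has the virtue of being self-contained without needing the (weak form of the) Perron--Frobenius theorem for possibly reducible matrices. Substantively the two are the same computation viewed through different lenses: your matrix $P$ is precisely the block matrix whose off-diagonal blocks encode the paper's operators $p$ and $q$, and a fixed point of $r$ corresponds exactly to a $1$-eigenvector of $P$.
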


\begin{proof}
Let $J_F:=J\cap F$ and $J_W:=J\cap W$. For $a\in\Rset_+^{J_F}$, denote by $\alpha^a$ the function on $A$ taking the value $a(f)$ for each edge $fw\in A_f$.  For $b\in\Rset_+^{J_W}$, denote by $\beta^b$ the function on $B$ taking the value $b(w)$ for each edge $fw\in B_w$. 

Define operators $p:\Rset_+^{J_F}\to\Rset_+^{J_W}$ and $q:\Rset^{J_W}\to\Rset_+^{J_F}$ as follows:
  \begin{numitem1}\label{eq:pq}
  \begin{itemize}
\item[(a)] for $a\in\Rset_+^{J_F}$ and $w\in J_W$, put $p(a)_w:=\sum(a(f)\colon  fw\in A_w)\,/\,|B_w|$;
\item[(b)] for $b\in\Rset_+^{J_W}$ and $f\in J_F$, put $q(b)_f:=\sum(b(w)\colon  fw\in B_f)\,/\, |A_f|$;
  \end{itemize}
  \end{numitem1}
and let $r$ be their composition $q\circ p$ (which maps $\Rset_+^{J_F}$ into itself). These $p,q,r$ are well defined since condition~\refeq{ABnonempty} implies  that for each $f\in J_F$,  $A_f\ne\emptyset$, and similarly for each $w\in J_W$, $B_w\ne\emptyset$ (though some $A_w$ ($w\in J_W$) or $B_f$ ($f\in J_f$) may be empty). 

Fix a number $\xi>0$, and consider the set $Q$ formed by the vectors $a\in\Rset_+^{J_F}$ such that $\sum(a(f)|A_f|\colon f\in J_F)=\xi$. This set forms a nonempty polytope (taking into account that each $a(f)$ is nonnegative and bounded, in view of $A_f\ne\emptyset$). 

Now using~\refeq{pq}(a), for $a\in\Rset_+^{J_F}$ and $b:=p(a)$, we obtain
 \begin{multline*} 
 \sum(b(w)|B_w|\colon w\in J_W)=\sum\nolimits_{w\in J_W} \sum(a(f)\colon fw\in A_w)               = \sum(a(f)|A_f|\colon f\in J_F).\quad
  \end{multline*}

Similarly, \refeq{pq}(b) implies that for $b\in\Rset_+^{J_W}$ and $a':=q(b)$,
  $$
  \sum(a'(f)|A_f|\colon f\in J_F)=\sum\nolimits_{f\in J_F}\sum(b(w)\colon
      fw\in B_f)=\sum(b(w)|B_w|\colon w\in J_W).
    $$
    
It follows that  for $a\in Q$, the vector $a':=r(a)=q(p(a))$ belongs to $Q$ as well. So the operator $r$ maps $Q$ into itself. Since $Q$ is a polytope and $r$ is, obviously, continuous, we can apply Brouwer fixed point theorem and conclude that there exists $a\in Q$ such that $r(a)=a$. Then the functions $\alpha=\alpha^a$ and $\beta=\beta^{p(a)}$ are as required in~(i)--(ii).
 \end{proof}

\noindent\textbf{Definition 4.} Refining the term ``active'' mentioned above, we call an edge $e=fw$ \emph{active for} $F$ if  $f\in F^+$ and $e\in D_f$, and \emph{active for} $W$ if $w\in W^+$ and $e\in H_w$. The subgraph of $G$ induced by the set of active edges is called the \emph{active graph} and denoted as $\Gamma=\Gamma(x)=(V_\Gamma,E_\Gamma)$. The directed counterpart of the active graph, denoted as $\orar\Gamma=\orar\Gamma(x)=(V_\Gamma,\orar {E}_\Gamma)$, is obtained by replacing each edge $e=fw\in E_\Gamma$ by the directed edge $\orar e=(f,w)$ if $e\in D_f$, and by the directed edge $\orar e=(w,f)$ if $e\in H_w$. A (weak) \emph{component} of $\Gamma$ (or $\orar\Gamma$) is a maximal subgraph $K$ in it such that any two vertices of $K$ are connected by a (not necessarily directed) path. 
  \medskip
  
Using Lemma~\ref{lm:plus-minus}, one can obtain the following
 \begin{lemma} \label{lm:reg_comp}
Let $K=(V_K,E_K)$ be a component of the active graph $\Gamma$. Then~\refeq{lp1}--\refeq{lp4} has a solution $(\phi,\psi)$ taking zero values on all vertices outside $K$.
 \end{lemma}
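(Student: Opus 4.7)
The plan is to obtain the desired solution by invoking Lemma~\ref{lm:plus-minus} inside $K$. I let $A$ be the edges of $E_K$ active for $F$ (those $fw$ with $f\in F^+$, $fw\in D_f$) and $B$ the edges active for $W$ (those $fw$ with $w\in W^+$, $fw\in H_w$); by the definition of the active graph, $E_K=A\cup B$, and these sets are disjoint, since if $fw\in D_f$, then Definition~3 places $fw$ in $T_w(x)$, which is disjoint from $H_w$, so $fw\notin B$.

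Before applying Lemma~\ref{lm:plus-minus}, I verify~\refeq{ABnonempty} inside the induced subgraph $(V_K,A\cup B)$. Given $fw\in A$, property~\refeq{f-w} gives $w\in W^+$, hence $H_w\ne\emptyset$; every $f'w\in H_w$ is active for $W$ and incident to $w\in V_K$, so it sits in $B_w$, whence $B_w\ne\emptyset$. A symmetric argument using $D_f\ne\emptyset$ for $f\in F^+$ handles edges in $B$. Lemma~\ref{lm:plus-minus} then supplies nonzero $\alpha\colon A\to\Rset_+$ and $\beta\colon B\to\Rset_+$ with common values $\alpha_f$ on $A_f=D_f$ and $\beta_w$ on $B_w=H_w$ (for $v\in V_K$), and with $\alpha(A_v)=\beta(B_v)$ at every $v\in V_K$. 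I extend these to all of $F^+\cup W^+$ by putting $\phi_f:=\alpha_f$ on $V_K\cap F^+$ and $0$ otherwise, and similarly for $\psi_w$.

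For $f\in V_K\cap F^+$, the identity $\alpha(A_f)=\beta(B_f)$ reads $h_f\phi_f=\sum_{fw\in B_f}\psi_w$, which is precisely~\refeq{lp1}: the only $fw\in E_f-D_f$ contributing a nonzero $\psi_w$ are those with $fw\in H_w$ and $w\in W^+$, i.e.\ the edges of $B_f$. Symmetrically, \refeq{lp2} holds at each $w\in V_K\cap W^+$. For $f\in F^+\setminus V_K$ the right-hand side of \refeq{lp1} also vanishes: a nonzero $\psi_w$ would force $w\in V_K$ and $fw\in H_w$, but then $fw$ would be an active edge incident to $V_K$ and hence inside $E_K$, forcing $f\in V_K$ by maximality of the component~$K$---a contradiction; the symmetric check handles $w\notin V_K$. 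Positivity~\refeq{lp3} is inherited from $\alpha,\beta\ge 0$, and since $A\ne\emptyset$ (any $B$-edge supplies $f\in F^+$ by~\refeq{f-w} and a nonempty $D_f\subseteq A$), the sum $\sum_f h_f\phi_f$ is strictly positive, so a single positive rescaling of $(\phi,\psi)$ achieves the normalization~\refeq{lp4}. I expect the main obstacle to be this last bookkeeping step: matching the sums in \refeq{lp1}--\refeq{lp2}, which nominally range over $E_f-D_f$ or $E_w-H_w$, with the restricted edge sets $B_f$, $A_w$ returned by Lemma~\ref{lm:plus-minus}---a reconciliation that rests on the fact that no active edge can straddle $V_K$ and its complement.
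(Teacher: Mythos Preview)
Your proof is correct and follows essentially the same approach as the paper: take $A=\bigcup_{f\in F_K}D_f$ and $B=\bigcup_{w\in W_K}H_w$, invoke Lemma~\ref{lm:plus-minus}, read off $(\phi,\psi)$, and extend by zero outside $K$. The only cosmetic difference is in achieving~\refeq{lp4}: the paper appeals to the proof of Lemma~\ref{lm:plus-minus} to pick $\xi=\eps$ directly, whereas you rescale at the end---both are fine, since the system~\refeq{lp1}--\refeq{lp3} is homogeneous. Your verification that the balance equations hold at vertices outside $V_K$ (via the observation that no active edge can straddle $V_K$ and its complement) is more explicit than the paper's one-line assertion, which simply states that the extended $(\phi,\psi)$ ``satisfy~\refeq{lp1}--\refeq{lp4}, as required.''
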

  \begin{proof}
~Take as $A$ the set $\cup(D_f\colon f\in F_K)$ and take as $B$ the set $\cup(H_w\colon w\in W_K)$, where $F_K:=F\cap V_K$ and $W_K:=W\cap V_K$. Property~\refeq{f-w} implies that $A,B$ obey condition~\refeq{ABnonempty}. Then, by Lemma~\ref{lm:plus-minus}, there exist $\alpha: A\to \Rset_+$ and $\beta:B\to\Rset_+$ satisfying properties~(i),(ii) in that lemma; moreover, as is seen from its proof, for any number $\xi>0$, the functions $\alpha,\beta$ can be chosen  so as to satisfy the equality $\sum(\alpha(e)\colon e\in A)=\xi$. Take $\xi:=\eps/2$, define $\phi_f:=\alpha(e)$ for $f\in F_K$ and $e\in D_f$, and define $\psi_w:=\beta(e)$ for $w\in W_K$ and $e\in H_w$. These $\phi,\psi$ extended by zero outside $K$ satisfy~\refeq{lp1}--\refeq{lp4}, as required.
 \end{proof}

Lemma~\ref{lm:reg_comp} has a sharper version. To state it, consider the directed active graph $\orar\Gamma=(V_\Gamma,\orar{E}_\Gamma)$ (see Definition~4), and let $\Lscr=\Lscr(x)$ denote the set of strong components in $\orar\Gamma$. (Recall that a \emph{strong component} of a directed graph is a maximal (by inclusion) subgraph $L=(V_L,E_L)$ in it such that $|V_L|>1$ and for any ordered pair $(u,v)$ in $V_L$, there is a directed path from $u$ to $v$. In other words, $L$ is a maximal connected subgraph representable as the union of directed cycles.) We write $F_L$ for $V_L\cap F$ and $W_L$ for $V_L\cap W$. There is a natural partial order on $\Lscr$ in which $L\in\Lscr$ is regarded as \emph{preceding} $L'\in\Lscr$ if $\orar\Gamma$ has a directed path going from $L$ to $L'$. We denote by $\Lrot=\Lrot(x)$ the set of \emph{maximal} components in $\Lscr$ under this order (i.e., those having no succeeding components). Note that~\refeq{f-w} implies that if $v$ is a vertex of $L\in\Lrot$, then any edge of $\orar\Gamma$ leaving $v$ belongs to $L$ as well.

We associate with $(\phi,\psi)\in \Rset_+^{F^+}\times\Rset_+^{W^+}$ the function $\orar\Delta=\orar\Delta^{\phi,\psi}$ on the set of active directed edges $\orar{E}_\Gamma$, taking the value $\phi_f$ on the edges $(f,w)\in D_f$, and $\psi_w$ on the edges $(w,f)\in H_w$ (Compare with $\Delta^{\phi,\psi}$ defined in~\refeq{Deltapp}). When $(\phi,\psi) \in \Pscr^\eps$, this $\orar\Delta$ represents a \emph{circulation} in $\orar\Gamma$, because the equalities in~\refeq{lp1} are equivalent to the conservation (or balance) conditions $\orar\Delta(\deltaout(f))=\orar\Delta(\deltain(f))$ for $f\in F^+$, and similarly for~\refeq{lp2} and $W^+$. Here $\deltain(v)$ ($\deltaout(v)$) denotes the set of edges in $\orar\Gamma$ entering (resp. leaving) a vertex $v$. The following fact is of use:
  \begin{numitem1} \label{eq:nonzero_path}
for $(\phi,\psi)\in\Pscr^\eps$, if vertices $u,v$ are connected in $\orar\Gamma$ by a directed path from $u$ to $v$, and if $(\phi,\psi)_u>0$, then $(\phi,\psi)_v>0$ as well.
 \end{numitem1}
(It suffices to see this for a single edge $e=(u,v)$ in $\orar\Gamma$. This edge belongs to the active head at $u$ (the directed version of either $H_u$ or $D_u$), and if $(\phi,\psi)_u>0$, then $\orar\Delta^{\phi,\psi}(e)>0$, implying $(\phi,\psi)_v>0$ by the balance condition at $v$.)

Now Lemma~\ref{lm:reg_comp} is strengthened as follows.  
  \begin{lemma} \label{lm:strong_comp}
For any maximal strong component $L\in\Lrot$, there exists a solution $(\phi,\psi)\in \Rset_+^{F^+}\times\Rset_+^{W^+}$ to system~\refeq{lp1}--\refeq{lp4} whose values outside $L$ are zeros, and this solution is unique.
  \end{lemma}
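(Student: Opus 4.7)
The plan is to establish existence by applying Lemma~\ref{lm:plus-minus} to the substructure induced by $L$, and uniqueness by combining the strong connectivity of $L$ with the support-propagation property~\refeq{nonzero_path}.

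For existence, I mimic the proof of Lemma~\ref{lm:reg_comp} but restrict the sets to $L$. Set $F_L := F \cap V_L$, $W_L := W \cap V_L$, $A := \bigcup_{f \in F_L} D_f$, and $B := \bigcup_{w \in W_L} H_w$. The remark preceding the lemma---every edge of $\orar\Gamma$ leaving $V_L$ stays in $L$---ensures that $A$ and $B$ live entirely inside $L$. To verify condition~\refeq{ABnonempty}, take $fw \in A$: the directed edge $(f,w)$ lies in $L$, so $w \in W_L$; since $L$ is strong, $w$ has an outgoing edge in $L$, which must belong to $H_w$, giving $B_w \ne \emptyset$. The case $fw \in B$ is symmetric. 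Lemma~\ref{lm:plus-minus} then produces nonnegative $\alpha$ on $A$ and $\beta$ on $B$ with the required balance and homogeneity. Defining $\phi_f := \alpha(e)$ for $f \in F_L$ and any $e \in D_f$, $\psi_w := \beta(e)$ for $w \in W_L$ and any $e \in H_w$, and zero outside $V_L$, yields a solution to~\refeq{lp1}--\refeq{lp3}; the normalization parameter $\xi$ in the proof of Lemma~\ref{lm:plus-minus} is free, so we scale it to satisfy~\refeq{lp4}.

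For uniqueness, the crucial step is the following \emph{positivity claim}: any nonzero nonnegative solution of the homogeneous balance system~\refeq{lp1}--\refeq{lp2}, supported on $V_L$, is strictly positive throughout $V_L$. Indeed, if some $v_0 \in V_L$ carries a positive value, then the single-edge balance argument underlying~\refeq{nonzero_path} propagates positivity along any directed edge out of a positive vertex; since $L$ is strongly connected and (by the remark) outgoing edges from $V_L$ cannot leave $L$, this propagation reaches every vertex of $V_L$ without leaking out. Now, given two solutions $(\phi,\psi)$ and $(\phi',\psi')$ as in the lemma, set $\lambda := \min_{v \in V_L} (\phi,\psi)_v / (\phi',\psi')_v$, well-defined because $(\phi',\psi')$ is strictly positive on $V_L$ by the claim. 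The function $(\phi,\psi) - \lambda(\phi',\psi')$ is nonnegative, supported on $V_L$, satisfies the homogeneous system~\refeq{lp1}--\refeq{lp2}, and vanishes at the minimizing vertex; the positivity claim (contrapositive form) then forces it to vanish identically, so $(\phi,\psi) = \lambda(\phi',\psi')$. Comparing~\refeq{lp4} on both sides yields $\lambda = 1$, hence the two solutions coincide. The main obstacle is precisely the positivity propagation: it succeeds because $L$ is \emph{maximal}, so positivity cannot escape $V_L$ to hit a forced zero and come back to destroy positivity inside $V_L$ via a balance equation.
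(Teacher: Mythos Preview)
Your proof is correct and follows essentially the same approach as the paper: existence via Lemma~\ref{lm:plus-minus} applied to the sets $A=\bigcup_{f\in F_L}D_f$ and $B=\bigcup_{w\in W_L}H_w$ (using that $\deltaout(v)\subseteq E_L$ for $v\in V_L$), and uniqueness via the positivity-propagation property~\refeq{nonzero_path} combined with the strong connectedness of $L$. Your uniqueness argument is in fact slightly cleaner than the paper's, since you work directly with the homogeneous system and invoke the normalization~\refeq{lp4} only at the end to pin down $\lambda=1$, whereas the paper's phrasing that the difference ``satisfies~\refeq{lp1}--\refeq{lp4}'' tacitly requires an additional constraint on the scalars $\lambda,\mu$.
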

  \begin{proof}
~Consider  $L=(V_L,E_L)\in\Lrot$. As is mentioned above, for each vertex
$v\in V_L$, the set $\deltaout(v)$ of edges of $\orar\Gamma$ leaving $v$ is contained in $L$. So we can apply Lemma~\ref{lm:plus-minus} to the sets $A:=\cup(D_f\colon f\in F_L)$ and $B:=\cup(H_w\colon w\in W_L)$. This gives rise to a solution  $(\phi,\psi)\in\Pscr^\eps$ to~\refeq{lp1}--\refeq{lp4} having the support $\{v\colon (\phi,\psi)_v\ne 0\}$ within $L$. 
 
To see the uniqueness of $(\phi,\psi)$, we first observe that its support is exactly $V_L$. (This follows for the strong connectedness of $L$ and property~\refeq{nonzero_path}.)

Now suppose, for a contradiction, that there is another solution $(\phi',\psi')\in \Pscr^\eps$ having the support within $L$. Then one can choose $\lambda,\mu> 0$ such that the function $(\phi'',\psi''):=(\lambda\phi,\lambda\psi)-(\mu\phi',\mu\psi')$ is nonnegative, satisfies~\refeq{lp1}--\refeq{lp4}, and takes zero value on some vertex $f\in F_L$ or $w\in W_L$. But this is impossible by the observation above. 
  \end{proof}

On the other hand, one can show the following
  \begin{lemma} \label{lm:rest}
Any function $(\phi,\psi)\in \Rset_+^{F^+}\times\Rset_+^{W^+}$ satisfying~\refeq{lp1},\refeq{lp2},\refeq{lp4} takes zero value on all vertices not contained in maximal strong components of $\orar\Gamma$.
  \end{lemma}
  \begin{proof}
~Let $Y$ be the set of vertices occurring in components in $\Lrot$. Consider a function $(\phi,\psi)$ satisfying~\refeq{lp1},\refeq{lp2},\refeq{lp4}, and suppose, for a contradiction, that the set $Z$ of vertices $v\in V_\Gamma-Y$ with nonzero values $(\phi,\psi)_v$ is nonempty. Since $\Lrot$ consists of maximal components, the set  $\deltaout(Y)$ of edges of $\orar\Gamma$ going from $Y$ to $V_\Gamma-Y$ is empty.

On the other hand, for any vertex $u$ in $V_\Gamma-Y$, in particular, for $u\in Z$, there is a directed path $P$ going from $u$ to a maximal strong component of $\orar\Gamma$. Then, by~\refeq{nonzero_path}, $(\phi,\psi)_u>0$ implies $(\phi,\psi)_{v}>0$ for all vertices $v$ on this path; therefore, the edge $e$ of $P$ leaving $V_\Gamma-Y$ and entering $Y$ satisfies $\orar\Delta^{\phi,\psi}(e)>0$.

As a consequence, we obtain
    $$
  \orar\Delta(\deltain(Y))-\orar\Delta(\deltaout(Y))=\orar\Delta(\deltain(Y))>0,
  $$
where $\deltain(Y)$ is the set of edges of $\orar\Gamma$ entering $Y$. But the difference in the left hand side must be zero (as this is the sum, over the vertices $v$ of $Y$, of ``divergences''  $\orar\Delta(\deltain(v))-\orar\Delta(\deltaout(v))$, which are zeros). A contradiction. 
  \end{proof}

By Lemma~\ref{lm:strong_comp}, for each component $L\in\Lrot$, there is a unique solution to~\refeq{lp1}--\refeq{lp4} having the support in $L$ (which is, moreover, exactly $V_L$); we denote this solution as $(\phi_L^\eps,\psi_L^\eps)=(\phi_L^\eps(x),\psi_L^\eps(x))$. Then Lemmas~\ref{lm:strong_comp} and~\ref{lm:rest} imply
   \begin{corollary} \label{cor:vertP}
The vertices of polytope $\Pscr^\eps(x)$ (defined in~\refeq{polPscr}) are exactly the vectors $(\phi_L^\eps,\psi_L^\eps)$ for $L\in\Lrot(x)$. The supports of these vectors are pairwise disjoint.
  \end{corollary}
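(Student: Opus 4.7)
The plan is to prove the corollary by decomposing an arbitrary point of $\Pscr^\eps(x)$ along the maximal strong components in $\Lrot$. The claim about disjoint supports is immediate: distinct strong components of $\orar\Gamma$ have pairwise disjoint vertex sets, and by Lemma~\ref{lm:strong_comp} the support of each $(\phi_L^\eps,\psi_L^\eps)$ is precisely $V_L$.

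Next I would check that each $(\phi_L^\eps,\psi_L^\eps)$ is an extreme point of $\Pscr^\eps(x)$. If it equaled a strict convex combination of two distinct points $(\phi',\psi'),(\phi'',\psi'')\in\Pscr^\eps(x)$, then, since all coordinates are nonnegative, both summands would have support contained in $V_L$; the uniqueness assertion of Lemma~\ref{lm:strong_comp} would then force each of them to coincide with $(\phi_L^\eps,\psi_L^\eps)$, contradicting their distinctness.

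The converse --- that every $(\phi,\psi)\in\Pscr^\eps(x)$ is a convex combination of the vectors $(\phi_L^\eps,\psi_L^\eps)$, $L\in\Lrot$ --- is the technical core. By Lemma~\ref{lm:rest} the support of $(\phi,\psi)$ lies in $\bigcup_{L\in\Lrot}V_L$, so I would decompose $(\phi,\psi)=\sum_{L\in\Lrot}(\phi^L,\psi^L)$, where $(\phi^L,\psi^L)$ agrees with $(\phi,\psi)$ on $V_L$ and vanishes off $V_L$; the disjointness of the sets $V_L$ makes this decomposition unambiguous. The crucial step is to verify that each $(\phi^L,\psi^L)$ still satisfies \refeq{lp1}--\refeq{lp3}. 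Fix $v\in V_L$; the balance equation at $v$ couples the coordinate at $v$ with coordinates at the $\orar\Gamma$-neighbors of $v$. By maximality of $L$ no edge of $\orar\Gamma$ leaves $V_L$; and any edge of $\orar\Gamma$ entering $V_L$ from some vertex $u\notin V_L$ must have $u$ outside every maximal strong component (otherwise the edge would leave another component in $\Lrot$, contradicting its maximality), so by Lemma~\ref{lm:rest} the coordinate at $u$ vanishes. Therefore the balance equation at $v$ in the restricted system receives only contributions from within $V_L$ and is satisfied. Setting $\eps_L:=\sum(h_f\phi^L_f\colon f\in V_L\cap F^+)$, one has $\sum_L\eps_L=\eps$; when $\eps_L>0$, the rescaled vector $(\eps/\eps_L)(\phi^L,\psi^L)$ lies in $\Pscr^\eps(x)$ with support in $V_L$ and hence equals $(\phi_L^\eps,\psi_L^\eps)$ by Lemma~\ref{lm:strong_comp}, yielding $(\phi^L,\psi^L)=(\eps_L/\eps)(\phi_L^\eps,\psi_L^\eps)$; when $\eps_L=0$, nonnegativity forces $\phi^L$ to vanish on $V_L\cap F^+$, and then \refeq{lp2} gives $\psi^L\equiv 0$ on $V_L\cap W^+$ as well. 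Summing over $L$ produces the required convex combination. The main obstacle is securing the decoupling of the balance equations in the middle step --- one must combine the maximality of $L$ with Lemma~\ref{lm:rest} to rule out contributions from outside $V_L$ --- after which the remainder is a direct appeal to the uniqueness clause of Lemma~\ref{lm:strong_comp}.
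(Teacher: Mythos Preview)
Your argument is correct and is precisely the elaboration the paper has in mind: the corollary is stated there without proof, merely as an immediate consequence of Lemmas~\ref{lm:strong_comp} and~\ref{lm:rest}, and you have filled in exactly the decomposition-by-components argument that these two lemmas are designed to support. In particular, your handling of the decoupling step---using maximality of $L$ to exclude outgoing edges and Lemma~\ref{lm:rest} to kill contributions along incoming edges from outside $\bigcup_{L'\in\Lrot}V_{L'}$---is the intended mechanism.
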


This enables us to specify the notion of rotations in our model.  
\medskip

\noindent\textbf{Definition 5.} By a $\Zset$-\emph{circulation} we mean an integer-valued function $\rho: E\to\Zset$ such that $\rho\ne 0$ and for each vertex $v\in V$, the sum $\rho(E_v)$ ($=\sum(\rho(e)\colon e\in E_v)$) is zero. Such a $\rho$ is called \emph{aligned} if $\rho(fw),\rho(fw')>0$ implies $\rho(fw)=\rho(fw')$, and, symmetrically, $\rho(fw),\rho(f'w)<0$ implies $\rho(fw)=\rho(f'w)$. We say that $\rho$ is \emph{normalized} if the g.c.d. of  values $\rho(e)$, $e\in E$, is one. We refer to a normalized aligned $\Zset$-circulation $\rho$ for which the subgraph of $G$ induced by the support $\{e\in E\colon \rho(e)\ne 0\}$ of $\rho$ is connected  as a \emph{pre-rotation} in $G$.
 \medskip
  
\noindent\textbf{Definition 6.} Consider a maximal strong component $L\in\Lrot$ and the corresponding vector $(\phi_L^\eps,\psi_L^\eps)\in\Pscr^\eps(x)$  with the support $V_L$ (existing by Lemma~\ref{lm:strong_comp}). Then there is a normalized aligned $Z$-circulation (viz. pre-rotation) proportional to the function $\Delta=\Delta^{\phi_L^\eps,\psi_L^\eps}$ on $E$ defined as in~\refeq{Deltapp}; we denote this circulation by $\rho_L$ and call it the \emph{rotation} for $x$ associated with $L$. Its analog for $\orar\Gamma$ is denoted as $\orar{\rho}_L$ (which for each $\orar e\in\orar E_\Gamma$, takes the value $|\Delta(e)|$). We say that a \emph{weight} $\lambda\in\Rset_+$ is \emph{admissible} for $L$, or for $\rho_L$, under the assignment $x$ if the following holds: 
  \begin{numitem1} \label{eq:adm_weight}
(a)~the function $x':=x+\lambda\rho_L$ is nonnegative and does not exceed the upper bound $b$, and~(b) for each $w\in W^+$, either $\pic_w(x')=\pic_w(x)$ and $H_w(x')\supseteq H_w(x)$, or $x'(e)=0$ for all edges $e$ in $\pic_w(x)$ (in the latter case, $x(e)=\lambda$, and the critical tie at $w$ becomes better: $\pic_w(x')>\pic_w(x)$). 
  \end{numitem1}
The maximal admissible weight $\lambda$ for $L$ is denoted by $\tau_L(x)$ and called the \emph{maximal rotational weight} for $L$, or for $\rho_L$, under $x$.
  \medskip
  
\noindent\textbf{Remark 1.} It is well known that all rotations arising in instances of stable allocation problem (SAP) are $0,\pm 1$ functions whose supports are formed by simple cycles. In contrast, in SMP, rotations may take large values (even exponentially large in $|E|$). A construction of ``big'' rotations will be given in Example~3 and Remark~3 in Sect.~\SEC{addit_prop}.  
 \medskip  
  
Lemma~\ref{lm:strong_comp} provides the existence of an admissible weight $\lambda>0$ for each $L\in\Lrot$. Clearly if $\lambda$ is admissible, then any $\lambda'$ in the interval $[0,\lambda]$ is admissible as well. Moreover, as a refinement of~\refeq{adm_weight} in case $\lambda<\tau_L$, one can see that
  \begin{numitem1} \label{eq:lambda<}
if $\lambda<\tau_L(x)$, then the transformation of $x$ into $x':=x+\lambda\rho_L$ preserves the active graph $\Gamma$ and: for each $w\in V^+$, we have $\pic_w(x')=\pic_w(x)$ and $H_w(x')=H_w(x)$, and for each $f\in F^+$, we have $H_f(x')=D_f(x)$.
  \end{numitem1} 
  
This implies that in case $\lambda<\tau_L(x)$, the rotation $\rho_L$ remains applicable to the new $x'$, but now the maximal weight for $\rho_L$ reduces to $\tau_L(x)-\lambda$ ($=\tau_L(x')$). 

Next, the importance of admissible weights is emphasized by the following
   \begin{lemma} \label{lm:tau_L}
For any $\lambda\le \tau_L(x)$, the assignment $x':=x+\lambda \Delta_L$ is stable and $x'\prec x$.
  \end{lemma}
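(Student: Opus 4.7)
\medskip
\noindent\textbf{Proof plan.} I will verify three things in turn, from which the conclusion $x'\prec x$ follows directly: (i) $x'$ is an admissible assignment; (ii) $x'$ is rational, i.e. $C_v(x'_v)=x'_v$ for every $v$; and (iii) $x'$ has no blocking edge. For (i), the bounds $0\le x'\le b$ are exactly the content of~\refeq{adm_weight}(a). Since $\rho_L$ is a $\Zset$-circulation, $\rho_L(E_v)=0$ for every $v$, so $|x'_v|=|x_v|$; in particular $|x'_v|=q(v)$ for all fully filled $v$.

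For (ii), vertices outside the support $V_L$ of $\rho_L$ are unaffected. For $w\in V_L\cap W^+$, property~\refeq{adm_weight}(b) leaves two alternatives: either $\pic_w(x')=\pic_w(x)$ with $H_w(x')\supseteq H_w(x)$, or every edge of $\pic_w(x)$ becomes zero and the critical tie moves upward; in both sub-cases $x'_w$ is rational because the homogeneity relation~\refeq{homogen} makes the values on the (new) head uniform and the balance $\rho_L(E_w)=0$ preserves $|x'_w|=q(w)$. For $f\in V_L\cap F^+$, Definition~3 gives two sub-cases: $D_f\subseteq H_f(x)\subseteq \pic_f(x)$, or $D_f\subseteq \pi^i$ for some $\pi^i\in\Piend_f(x)$. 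In the first sub-case the critical tie and tail of $x$ persist, while the head may shrink to $D_f$; in the second the critical tie of $x'_f$ drops to $\pi^i$, the head becomes exactly $D_f$ with common value $\lambda\phi_f$, the old $\pic_f(x)$ lies in $\Pibeg_f(x')$ with values unchanged, and~\refeq{Cvz} is then easy to verify.

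For (iii) I use the characterization~\refeq{blocking}: an unsaturated edge $e=fw$ is non-blocking iff some fully filled endpoint $v$ has $e\in H_v(x')\cup \Eend_v(x')$. I split on the sign of $\rho_L(e)$. If $\rho_L(e)>0$, then $e\in D_f$, and~\refeq{lambda<} (for $\lambda<\tau_L$) or the limiting description in~\refeq{adm_weight}(b) (for $\lambda=\tau_L$) puts $e$ into $H_f(x')$. If $\rho_L(e)<0$, then $e\in H_w(x)$, and~\refeq{adm_weight}(b) puts $e$ into $H_w(x')$ or moves it into $\Pibeg_w(x')$ (when the critical tie at $w$ strictly improves), so $e\notin T_w(x')$. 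Finally, if $\rho_L(e)=0$ and $e$ is unsaturated, then $x'(e)=x(e)$ and, since $x$ is stable, some fully filled endpoint $v$ witnesses $e\in H_v(x)\cup\Eend_v(x)$; I then use the structural description of $H_v(x'),T_v(x'),\Eend_v(x')$ obtained in step~(ii) to check that $e$ is still in $H_v(x')\cup \Eend_v(x')$. This last check is the main obstacle: when $v\in F^+$ has its critical tie drop, or when $v\in W^+$ has its critical tie rise, the head/tail partition changes, and one must confirm that no previously covered edge escapes into the new tail. The key observation is that, by construction of $D_f$ as the \emph{best nonempty} potential head (Definition~3), the edges in $\Pibeg_f(x')\setminus\Pibeg_f(x)$ consist of ties all of whose non-saturated edges have a fully filled mate in $W^+$ with the edge in the head there, which forces them into $H_w(x')\cup\Eend_w(x')$.

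\medskip
\noindent Finally, $x'\prec x$ is immediate from~\refeq{zzp}: for every $f\in F^+$, the edges on which $x'$ strictly exceeds $x$ all lie in $D_f\subseteq H_f(x)\cup \Eend_f(x)$, hence $x(e)\ge x'(e)$ on the tail $T_f(x)$, giving $x_f\succeq_f x'_f$, with strict preference since $x'_f\neq x_f$; for $f\notin F^+$ we have $x_f=x'_f$.
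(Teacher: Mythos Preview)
Your plan follows essentially the same route as the paper's proof: split on whether $\rho_L(e)\ne 0$ or $\rho_L(e)=0$, and in the latter case transfer the ``non-blocking witness'' for $x$ to one for $x'$. Two concrete gaps, however, need fixing.

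\medskip
\textbf{(1) A slip in the $\rho_L(e)<0$ case.} When the critical tie at $w$ strictly improves, the old tie $\pic_w(x)$ (and hence $e\in H_w(x)$) lands in $\Piend_w(x')$, not in $\Pibeg_w(x')$. As written, your claim ``$e\in\Pibeg_w(x')$, so $e\notin T_w(x')$'' is false in general (edges in $\Ebeg_w(x')$ are in the tail). The intended conclusion $e\in\Eend_w(x')$ is what you need.

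\medskip
\textbf{(2) A missed sub-case when $\rho_L(e)=0$.} Your ``key observation'' treats only the ties in $\Pibeg_f(x')\setminus\Pibeg_f(x)$, i.e.\ only the situation in which the critical tie at $f$ actually drops. But the witness at $f$ can also fail when the critical tie stays put: if $D_f=\tilde H_f(x)\subsetneq H_f(x)$, then any $e\in H_f(x)\setminus D_f$ has $x'(e)=x(e)<x'(e')$ for $e'\in D_f$, so $e$ leaves the head and enters $T_f(x')$. You must then switch the witness to $w$. Definition~3 gives this: since $e$ is non-saturated and $e\in H_f(x)$ but $e\notin\tilde H_f(x)$, we have $e\notin T_w(x)$; and since $x(e)>0$ (head values are positive), $e\notin\Eend_w(x)$, hence $e\in H_w(x)$. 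If $w\in V_L$ this would force $e\in E_L$, contradicting $\rho_L(e)=0$; so $w\notin V_L$, $x'_w=x_w$, and $e\in H_w(x')$. The paper organizes exactly this kind of witness-switching via a systematic four-case split (cases (i)--(iv)) on whether the original witness is $H_w$, $\Eend_w$, $H_f$, or $\Eend_f$, always reducing to the $w$-side when the $f$-witness collapses.

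\medskip
A minor additional imprecision: in your observation you write ``fully filled mate in $W^+$ with the edge in the head there''. Definition~3 only yields $w\in W^=$ and $e\notin T_w(x)$, i.e.\ $e\in H_w(x)\cup\Eend_w(x)$; you then still need the one-line check (as above) that this persists for $x'$ when $w\in V_L$.
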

  \begin{proof}
~We have $0\le x'\le b$ (by~\refeq{adm_weight}), and $|x'_v|=|x_v|$ for all $v\in V$  (by the definition of $\rho_L$). So $x'$ obeys both $b$ and $q$ and preserves the fully filled sets $F^=$ and $W^=$. 

To show the stability of $x'$, consider an edge $e=fw\in E$ with $x'(e)<b(e)$. We have to show that $e$ is not blocking for $x'$; equivalently, there is $v\in\{f,w\}$ such that the edge $e$ either belongs to the head $H_v(x')$ or occurs in $\Eend_v(x')$. Then $e$ is not interesting for $v$ under $x'$, and we are done. (Here it suffices to consider the case when both $f,w$ are fully filled; for otherwise Cleaning procedure (C) ensures that $x'_f=x_f$ and $x'_w=x_w$.)

To show this, we first suppose that $x'(e)\ne x(e)$. The case $x'(e)<x(e)$ is possible only if $w\in V_L$ and $e\in H_w(x)$. Then (cf.~\refeq{adm_weight}(b)) either $e\in H_w(x')$ or $e\in\pic_w(x)<\pic_w(x')$ (whence $e\in\Eend_w(x')$). And if $x'(e)>x(e)$, then $f\in V_L$ and $e\in D_f(x)$, implying $e\in H_f(x')$. Hence in both cases, $e$ is not blocking for $x'$.

Now let $x'(e)=x(e)<b(e)$. Then $e\notin E_L$. Since  $x$ is stable, we are in at least one of the following cases: (i) $e\in H_w(x)$; (ii) $e\in\Eend_w(x)$; (iii) $e\in H_f(x)$; (iv) $e\in\pi^i\subseteq \Eend_f(x)$ (where $w\in W^=$ in cases~(i),(ii), and $f\in F^=$ in cases (iii),(iv)). 

Note that if none of $f,w$ is in $V_L$, then $x'_f=x_f$ and $x'_w=x_w$, and the result follows. So assume that some of $f,w$ is in $V_L$ (while $e\notin E_L$). Moreover, we may assume that $w\in V_L$ in cases (i),(ii); for otherwise $x'_w=x_w$, and $e$ is not interesting for $w$ under $x'$ (since this is so under $x$). Similarly, we may assume that $f\in V_L$ in cases~(iii),(iv). Now consider these cases in detail. 

In case~(i) with $w\in V_L$, we have $e\in E_L$; so this is not the case.

In case~(ii) with $w\in V_L$, we have $\pic_w(x')\ge\pic_w(x)$. Then $e\in\Eend_w(x)$ implies $e\in\Eend_w(x')$, whence $e$ is not interesting for $w$ under $x'$.

In case~(iii) with $f\in V_L$, we have $e\in H_f(x)$ but $e\notin\tilde H_f(x)$ (for otherwise there would be $e\in\tilde H_f(x)=D_f(x)$, implying $x'(e)>x(e)$). This implies that $e\notin T_w(x)$, and therefore, $e\in H_w(x)$. (Here we appeal to Definition~3 and notice that $w$ cannot be deficit; for otherwise, we would have $D_f(x)=\emptyset$, whence $f$ should be inserted in $V^0$ by Cleaning procedure.) Then we are as in case~(i).

Finally, in case~(iv) with $f\in V_L$, compare $\pi^i\in\Pi_f$ (containing $e$) with $\pic_f(x')=\pi^j$ (containing $H_f(x')=D_f(x)$). If $\pi^i<\pi^j$, we are done. So assume that $\pi^i\ge \pi^j$. The vertex $w$ is not deficit. Also $e\notin D_f(x)$ (otherwise we obtain $x'(e)>x(e)$). It follows (cf. Definition~3) that the critical tie $\pic_w(x)$ must be better than the tie in $\Pi_w$ containing $e$, i.e. $e\in\Eend_w(x)$. This implies that $e\in\Eend_w(x')$, and $e$ is not interesting for $w$ under $x'$.

Thus, in all cases, $e$ is not blocking for $x'$, as required. The fact $x'\prec x$ is easy.
  \end{proof}

In fact, different rotations in $\Rscr(x):=\{\rho_L\colon L\in\Lrot(x)\}$ can be applied independently, and the following generalization of Lemma~\ref{lm:tau_L} is valid.
  \begin{prop} \label{pr:many}
Let $\lambda:\Lrot(x)\to\Rset_+$ satisfy $\lambda(L)\le\tau_L(x)$ for all $L\in\Lrot(x)$. Then $x':= x+\sum(\lambda(L)\rho_L\colon L\in\Lrot(x))$ is a stable assignment.
 \end{prop}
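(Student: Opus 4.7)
The plan is to reduce the simultaneous application of the rotations to the single-rotation analysis of Lemma~\ref{lm:tau_L}, by exploiting the disjointness guaranteed by Corollary~\ref{cor:vertP}. First I would record the key structural fact: the vertex-supports $V_L$ of distinct components $L\in\Lrot(x)$ are pairwise disjoint, and since each $\rho_L$ has its edge-support inside $E_L$ (edges with both ends in $V_L$), the edge-supports of the $\rho_L$ are pairwise disjoint as well. Consequently, for every vertex $v$ there is at most one $L=L(v)\in\Lrot(x)$ with $v\in V_L$, and $x'_v$ coincides with the modification of $x_v$ produced by applying the single rotation $\rho_{L(v)}$ alone with weight $\lambda(L(v))\le\tau_{L(v)}$ (or with $x_v$ itself when no such $L(v)$ exists).

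From here I would verify admissibility and rationality: the bound $0\le x'\le b$ follows from~\refeq{adm_weight}(a) edge-by-edge, since at most one rotation touches any given edge; the identity $|x'_v|=|x_v|$ holds because each $\rho_L$ is a $\Zset$-circulation, so the sets $F^=$ and $W^=$ are unchanged; and the rationality $x'_v\in\Ascr_v$ together with the update of $\pic_v$, $H_v$ and $T_v$ at each vertex follows locally from the single-rotation verification contained in the proof of Lemma~\ref{lm:tau_L} (via~\refeq{adm_weight}(b) and~\refeq{lambda<}).

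The core of the argument is to show that no edge $e=fw$ with $x'(e)<b(e)$ is blocking for $x'$, and I would split into two cases. If $e$ lies in $E_L$ for some (necessarily unique) $L$, then the local structures at both $f$ and $w$ under $x'$ are exactly what a single-rotation shift by $\rho_L$ would produce, so the argument of Lemma~\ref{lm:tau_L} transfers verbatim. If $e$ lies in no $E_L$, then $x'(e)=x(e)$, and by the stability of $x$ I can fix some $v\in\{f,w\}$ at which $e$ is non-interesting under $x$. The main obstacle is this second case, where $f$ and $w$ may lie in distinct strong components $L(f)\ne L(w)$ and both restrictions $x'_f$, $x'_w$ differ from their $x$-counterparts; the remedy is the observation made just before Lemma~\ref{lm:strong_comp} that every edge of $\orar\Gamma$ leaving a vertex of a maximal strong component stays inside that component. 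Combined with $e\notin E_{L(v)}$, this forces $e\notin D_f$ when $v=f\in V_{L(f)}$ and $e\notin H_w$ when $v=w\in V_{L(w)}$, placing us in one of the subcases~(ii)--(iv) of Lemma~\ref{lm:tau_L}'s proof, which depend only on the local modification at the single vertex $v$ and therefore go through unchanged. The polarity part $x'\preceq x$ is then automatic from $x'\ne x$ and the componentwise analysis of the head structure at the firms in $\bigcup_L V_L\cap F$.
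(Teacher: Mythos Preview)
Your overall strategy coincides with the paper's: exploit the pairwise disjointness of the supports of the $\rho_L$ to localize the modification at each vertex, and then rerun the edge-by-edge case analysis from the proof of Lemma~\ref{lm:tau_L}. The paper likewise reduces to cases~(i)--(iv) of that proof.

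There is, however, a genuine imprecision in your final step. You assert that subcases~(ii)--(iv) ``depend only on the local modification at the single vertex $v$ and therefore go through unchanged.'' This is true for case~(ii), but not for cases~(iii) and~(iv). In the proof of Lemma~\ref{lm:tau_L}, case~(iii) (where $e\in H_f(x)$ with $f\in V_L$) is resolved by deducing $e\in H_w(x)$ and \emph{reducing to case~(i) at~$w$}; case~(iv) (in the subcase $\pi^i\ge\pi^j$) concludes by showing $e\in\Eend_w(x')$. Both conclusions involve the other endpoint $w$, and in the present multi-rotation setting $w$ may lie in a \emph{different} component $L'\ne L(f)$, so $x'_w\ne x_w$ and the single-rotation argument does not apply verbatim. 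The paper faces exactly this issue and resolves it by an explicit ``switch'': if $w\in V_{L'}$ for some other $L'\in\Lrot$, one replaces $L$ by $L'$ and observes that with respect to $L'$ one is now in case~(i) or~(ii); case~(i) is then vacuous (it would force $e\in E_{L'}$, contradicting $e\notin\bigcup_L E_L$), and case~(ii) is the genuinely local one. Your write-up needs this switching step, or an equivalent observation (e.g., that the deductions $e\in H_w(x)$ in~(iii) and $e\in\Eend_w(x)$ in~(iv) place us back at $w$ in case~(i) or~(ii) relative to $L(w)$, where the argument does close). Once that is added, your proof matches the paper's.
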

  \begin{proof}
This is shown by use of Lemma~\ref{lm:tau_L} and the fact that the maximal strong components of the active graph are pairwise disjoint (the latter implies, in particular, that $x'$ does not exceed upper bounds and quotas). More precisely, we have to show that each edge $e=fw\in E$ with $x'(e)<b(e)$ is non-blocking for $x'$. This is immediate if both vertices $f,w$ belong to the same component in $\Lrot$ (by appealing to Lemma~\ref{lm:tau_L}), or if  both equalities $x'_f=x_f$ and $x'_w=x_w$ take place (since $e$ is not blocking for $x$, implying that $e$ is not blocking for $x'$ as well).

So we may assume that $e$ belongs to no component in $\Lrot$ (implying $x'(e)=x(e)$) and that either $x'_f\ne x_f$ or $x'_w\ne x_w$ (or both) takes place.

Arguing as in the proof of Lemma~\ref{lm:tau_L}, we get into at least one of cases~(i)--(iv) examined there; moreover, we have $w\in V_L$ in case~(i) or~(ii), and $f\in V_L$ in case~(iii) or~(iv), where $L\in\Lrot$. In cases~(i),(ii) (when $w\in V_L$), our analysis is similar to that in the previous proof. And in cases~(iii),(iv) (when $f\in V_L$), if the other end $w$ of $e$ does not belong to any other component in $\Lrot$, then we again argue as in the previous proof. And if $w$ belongs to another component $L'\in\Lrot$, then we can switch to consideration of this  $L'$ rather than $L$, where we get into case~(i) or~(ii).
  \end{proof}

As a consequence of this proposition, we obtain the following
 \begin{corollary} \label{cor:Pgood}
For a sufficiently small $\eps>0$, any assignment $x':=x+\Delta^{\phi,\psi}$ with $(\phi,\psi)\in\Pscr^\eps(x)$ (defined in~\refeq{polPscr}) is stable, i.e. $x'\in X^\eps(x)$ (defined in~\refeq{Xeps}) .
  \end{corollary}

This gives one direction in assertion~(a) of Theorem~\ref{tm:close}. The other (easier) direction, saying that any stable assignment $x'\in\Sscr$ close to $x$ and satisfying $x'\prec x$ belongs to $\{ x+\Delta^{\phi,\psi}\colon (\phi,\psi)\in
\Pscr^\eps(x)\}$ for an appropriate $\eps$, can be concluded from our discussion in the beginning of this section (including Lemmas~\ref{lm:xwyw} and~\ref{lm:xfyf} and Cleaning procedure).

Now we show part (b) of this theorem (which will lead to the nice property that any stable $y\prec x$ can be reached from $x$ by a sequence of rotations).

 \begin{lemma} \label{lm:second}
If $x,y\in\Sscr$ are such that $y\prec x$, then for a sufficiently small $\eps>0$, there exists $x'\in X^\eps(x)$ between $x$ and $y$, namely, $y\preceq x'\prec x$. 
 \end{lemma}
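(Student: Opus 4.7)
The aim is to construct, for sufficiently small $\eps>0$, a vector $(\phi,\psi)\in\Pscr^\eps(x)$ so that the associated assignment $x':=x+\Delta^{\phi,\psi}$ satisfies $y\preceq x'$. The companion requirement $x'\prec x$ is then automatic from $(\phi,\psi)\neq 0$, and the stability of $x'$ follows from Corollary~\ref{cor:Pgood} once $\eps$ is small; so only the comparison $y\preceq_F x'$ really needs proof.

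\textbf{Step 1 (localisation of $\Delta:=y-x$).} By size invariance~\refeq{strength} the restrictions $x_v$ and $y_v$ coincide at every deficit $v$, so $\Delta$ vanishes on edges incident to $V\setminus(F^=\cup W^=)$. I then propagate the equality $y_v=x_v$ through the labelling order of the Cleaning procedure, mirroring~\refeq{sing_path}: at a vertex $v$ just labelled singular because its head touches an already ``frozen'' (deficit or previously singular) neighbour $u$, the balance $\Delta(E_v)=0$ combined with the comparison $x_v\succeq_v y_v$ (or its polar form at workers) and the case analysis on $\pic_v(y)$ versus $\pic_v(x)$ from~\refeq{zzp} forces $\Delta|_{E_v}=0$. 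Consequently $\Delta$ is supported inside the regular subgraph and $\Delta(E_v)=0$ for every $v\in F^+\cup W^+$.

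\textbf{Step 2 (extracting an aligned $(\phi,\psi)$).} Applying~\refeq{zzp} to $x_f\succ_f y_f$ at each $f\in F^+$ with $y_f\neq x_f$ gives $\Delta\leq 0$ on $T_f(x)$; dually, $y_w\succ_w x_w$ at $w\in W^+$ yields $\Delta\geq 0$ on $T_w(y)$. Together with the stability of $y$ and the blocking criterion~\refeq{blocking}, these sign conditions force the positive part of $\Delta$ at $f$ to live on edges that are compatible with the best nonempty candidate head $D_f(x)$ from Definition~3, and the negative part at $w$ to live on $H_w(x)$. Averaging across these heads, I then set $\phi_f:=\Delta(D_f(x))/h_f$ for $f\in F^+$ and $\psi_w:=-\Delta(H_w(x))/h_w$ for $w\in W^+$. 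Non-negativity follows from the sign analysis, and the vertex-balance $\Delta(E_v)=0$ coming from Step~1 translates directly into the equalities~\refeq{lp1}--\refeq{lp2}; rescaling $(\phi,\psi)$ by a sufficiently small positive $\alpha$ to enforce~\refeq{lp4} with the target $\eps$ then places the result inside $\Pscr^\eps(x)$.

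\textbf{Step 3 (verification $y\preceq x'$).} For $\eps$ small, \refeq{lambda<} gives $T_f(x')=T_f(x)$ and $T_w(x')=T_w(x)$, with all critical ties and heads preserved at regular vertices. On each $T_f(x')=T_f(x)$ we therefore have $x'(e)=x(e)$, and by Step~2 also $y(e)\leq x(e)=x'(e)$; so~\refeq{zzp} yields $x'_f\succeq_f y_f$. The symmetric argument at each $w\in W^+$ together with polarity~\refeq{stab-mix}(a) then gives $y\preceq_F x'$, as required.

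\textbf{Main obstacle.} The delicate point is Step~2: when $\pic_f(y)<\pic_f(x)$, the positive part of $\Delta$ at $f$ may in principle sit in a tie in $\Piend_f(x)$ that lies outside $D_f(x)$, and the naive averaging would fail. Resolving this requires combining three ingredients — the stability of $y$ together with the blocking criterion~\refeq{blocking} at the opposite endpoint $w$; the ``best nonempty candidate'' extremality built into Definition~3 of $D_f$; and the vertex-balance propagation of Step~1 — to channel every positive contribution into a coherent aligned $(\phi,\psi)$, possibly at the cost of assembling contributions from several maximal strong components $L\in\Lrot(x)$ simultaneously (justified by Proposition~\ref{pr:many}).
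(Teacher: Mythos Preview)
Your Step~2 does not go through. The averaged pair $(\phi,\psi)$ you define --- $\phi_f=\Delta(D_f(x))/h_f$, $\psi_w=-\Delta(H_w(x))/h_w$ --- generally fails the balance equations~\refeq{lp1}--\refeq{lp2}, so no rescaling places it in $\Pscr^\eps(x)$. Equation~\refeq{lp1} with your definitions would require $\Delta(D_f)=\sum_{w:fw\in E_f\setminus D_f}(-\Delta(H_w)/h_w)$, whereas the true vertex balance gives $\Delta(D_f)=-\sum_{w:fw\in E_f\setminus D_f}\Delta(fw)$; these coincide only if $\Delta$ is constant on each $H_w$, i.e.\ only under the homogeneity of Lemma~\ref{lm:close_x}, which holds for \emph{close} $x'$ but not for an arbitrary $y\prec x$. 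There is a second obstruction you name but do not resolve: the positive part of $\Delta$ at $f$ lies in $\tilde H_f(x)\cup\bigcup_i\hat H^i_f(x)$ (this is~\refeq{E>f}), but need not lie in the single best candidate $D_f(x)$. If $\tilde H_f\ne\emptyset$ (so $D_f=\tilde H_f\subseteq H_f(x)$) while $\pic_f(y)$ sits in a later tie, then $E^>_f$ may live entirely in some $\hat H^i_f$ disjoint from $D_f$, and your $\phi_f$ can even be negative (since $E^<_f$ may meet $H_f(x)$ by~\refeq{f><}). Invoking Proposition~\ref{pr:many} does not help here: that proposition lets you combine rotations you already have, but you have not yet produced any element of $\Pscr^\eps(x)$.

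The paper's argument takes a different route and never tries to align $\Delta$. It uses $y$ only to guide a reachability construction inside $\orar\Gamma(x)$: starting from any $f_0$ with $y_{f_0}\ne x_{f_0}$, set $A_{f_0}:=D_{f_0}(x)$, show via stability of $y$ that each endpoint $w$ of $A_{f_0}$ also has $y_w\ne x_w$, set $A_w:=H_w(x)\subseteq E^<$, and iterate. The resulting subgraph of $\orar\Gamma(x)$ is closed under outgoing edges and therefore contains some $L\in\Lrot(x)$; then $x':=x+\lambda\rho_L$ for small $\lambda$ is checked directly to satisfy $y_w\succeq_w x'_w$ at every $w$. The point is that a \emph{single} rotation suffices, and one finds it combinatorially rather than by massaging $y-x$.
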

 \begin{proof}
Define $E^>:=\{e\in E\colon y(e)>x(e)\}$ and $E^<:=\{e\in E\colon y(e)<x(e)\}$.
For $v\in V$, define $E^>_v:=E^>\cap E_v$ and $E^<_v:=E^<\cap E_v$. Clearly each edge $fw$ in $E^>\cup E^<$ connects fully filled vertices: $f\in F^=$ and $w\in W^=$. Let $V^\gtrless$ denote the set of vertices covered by the edges in $E^>$ (and $E^<$).

First of all we specify the structure and interrelation of $E^>_v$ and $E^<_v$ for $v\in V^\gtrless$. We observe the following:
  \begin{numitem1} \label{eq:f><}
for $f\in V^\gtrless\cap F$, the equality $C_f(x_f\vee y_f)=x_f$ (in view of $x_f\succ y_f$) implies $E^>_f\subseteq H_f(x)\cup \Eend_f(x)$ and $E^<_f\subseteq \Ebeg_f(x)\cup\{\pic_f(x)\}$; and
  \end{numitem1}
  \begin{numitem1} \label{eq:w><}
for $w\in V^\gtrless\cap W$, the equality $C_w(x_w\vee y_w)=y_w$ (in view of $y\succ_W x$) implies that either $E^>_w\subseteq T_w(x)$ and $H_w(x)\subseteq E^<_w\subseteq \pic_w(x)$ (when $\pic_w(y)=\pic_w(x)$), or $E^>_w\subseteq \Ebeg_w(x)$ and $E^<_w=\{e\in \pic_w(x)\colon x(e)>0\}$ (when $\pic_w(y)>\pic_w(x)$). 
  \end{numitem1}
From there relations one can conclude that  
  \begin{numitem1} \label{eq:E>f}
for any $f\in V^\gtrless\cap F$, there holds $E^>_f\subseteq  \cup(D^i_f(x)\colon  \pi^i\in \hat \Pi_f(x) )$. 
  \end{numitem1}
(Where $D^i_f(x)$ and $\hat \Pi_f(x)$ are defined in Definition~3.)
Using these observations, we construct sets $A_v$ of active edges for $x$ and $v\in V^\gtrless$ by the following procedure. Initially, we fix an arbitrary vertex $f=f_0$ in $V^\gtrless\cap F$. The inclusion in~\refeq{E>f} implies  $D_f(x)\ne\emptyset$. Define $A_f:=D_f(x)$ and $W(A_f):=\{w\colon fw\in A_f\}$. We assert that
  \begin{equation} \label{eq:WAf}
  W(A_f)\subseteq V^\gtrless \cap W.
  \end{equation}
Indeed, suppose this is not so, and let $e=fw\in A_f$ be such that $w\notin V^\gtrless$. Then $y_w$ ($=y\rest{E_w}$) coincides with $x_w$. By the definition of $D_f(x)$, $e$ is unsaturated and belongs to the tail $T_w(x)$. Therefore, $e$ is interesting for $w$ under $x$ and $y$. At the same time, since $D_f(x)$ is the most preferable set among all $D^i_f(x)$ for $\pi^i\in\hat \Pi_f(x)$, the inclusion in~\refeq{E>f} implies that the tie in $\Pi_f$ containing $A_f$ is not worse than the critical tie $\pic_f(y)$ (which contains a nonempty subset of $E^>_f$). This implies that $e$ is interesting for $f$ under $y$ (taking into account that $y(e)=x(e)$, by the supposition). But then the edge $e$ is blocking for $y$; a contradiction.

Now using~\refeq{WAf}, we proceed as follows. For each $w\in W(A_f)$, assign $A_w:=H_w(x)$. Then $A_w\subseteq E^<_w$ (cf.~\refeq{w><}). For each $e=fw\in A_w$, the vertex $f$ belongs to $V^\gtrless$ (since $e\in E^<$), and we handle $f$ in a way similar to that applied to $f_0$ above. And so on.

As a result, this procedure constructs a collection $\Qscr$ of sets $A_v$ for certain vertices $v$ in $V^\gtrless$. Here if $v=f\in F$, then $A_f$ coincides with $D_f(x)$, and if $v=w\in W$, then $A_w=H_w$. Upon termination of the procedure, for each edge $fw\in A_f\in\Qscr$, the collection $\Qscr$ contains the set $A_w$, and similarly, for each edge $fw\in A_w\in \Qscr$, $\Qscr$ contains $A_f$. In other words, $\Qscr$ determines a subgraph $\orar\Gamma'$ of the active graph $\orar\Gamma(x)$, which is closed under the reachability by directed paths. Then $\orar\Gamma'$ contains a maximal strong component $L$ of $\orar\Gamma(x)$. We fix $L$, take the rotation $\rho_L$ along with a sufficiently small weight $\lambda>0$, and define $x':=x+\lambda\rho_L$. We assert that this $x'$ is as required.

To see this, we first observe that by the construction and explanations above,
  \begin{numitem1} \label{eq:AwD}
each $w\in V_L\cap W$ satisfies $A_w=H_w\subseteq E^<_w$, and each $e\in A_w$ satisfies $y(e)<x'(e)<x(e)$.
  \end{numitem1}
  
In turn, for each $f\in V_L\cap F$, the edges $e\in A_f$ satisfy $x(e)<x'(e)<b(e)$. We need somewhat stronger relations involving $y$. More precisely, we assert that
  \begin{numitem1} \label{eq:AfD}
for each $f\in V_L\cap F$, any edge $e=fw\in A_f$ ($=D_f(x)$) with $y(e)<b(e)$ belongs to either $E^>\cup H_w(y)$ or $\Eend_w(y)$.
  \end{numitem1}
Indeed, since $e$ is not blocking for $y$, $e$ belongs to at most one tail among $T_f(y)$ and $T_w(y)$ (cf.~\refeq{blocking}); therefore, at least one of the following takes place: (i) $e\in H_f(y)$; (ii) $e\in H_w(y)$; (iii) $e$ belongs to $\Eend_f(y)$; and (iv) $e$ belongs to $\Eend_w(y)$. 

In cases~(ii) and~(iv), we are done. Assume that we are not in these cases. Then $e\in T_w(y)$. Also, by the definition of $D_f(x)$, we have $e\in T_w(x)$. Let $\pi^i$ and $\pi^j$ be the ties in $\Pi_f$ containing $D_f(x)$ and $H_f(y)$, respectively. In case~(iii), $\pi^j$ is better than $\pi^i$ (containing $e$), which is impossible. Finally, in case~(i), we have $e\in D_f(x)\cap H_f(y)$, whence $i=j$. Suppose that $\pi^i=\pic_f(x)$. Then $y_f\preceq x_f$ and $y_f\ne x_f$ (in view of $f\in V^\gtrless$) imply that $H_f(y)\subseteq H_f(x)$ and $H_f(y)\subseteq E^>$, whence $e\in E^>$. And if $\pi^i$ is worse than $\pic_f(x)$, then $0=x(e)<y(e)$. In both cases, we obtain $e\in E^>$, completing the examination of~\refeq{AfD}.

Now considering the relations in~\refeq{AwD} and~\refeq{AfD}, we can conclude that $C_w(y_w\vee x'_w)=y_w$. Thus, $y_w\succeq_w x'_w$, and this relation holds for all $w\in V^\gtrless\cap W$, yielding $y\succeq_W x'$. By the polarity, this implies $y\preceq_F x'$, proving the lemma. 
 \end{proof} 

This completes the proof of Theorem~\ref{tm:close}. \hfill$\qed\; \qed$
 \medskip

In fact, the above analysis enables us to establish additional useful properties.
First of all the following sharper version of Lemma~\ref{lm:second} can be extracted from the above proof (it will be used in Sect~\SEC{addit_prop}). 
 \begin{lemma} \label{lm:strength_xp}
Let $x,y\in\Sscr$ and $y\prec x$. Then there exists a maximal strong component $L$ in $\orar\Gamma(x)$ and a number $\lambda\le \tau_L(x)$ such that the assignment $x':=x+\lambda\rho_L$ satisfies $y\preceq x'\prec x$. The maximum possible weight $\lambda$ (subject to $y\preceq x'$) is equal to the minimum between the number $\tau_L(x)$, the values $(y(e)-x(e))/\rho_L(e)$ over $e\in E_L$ such that $\rho_L(e)>0$ and $y(e)>x(e)$, and the values $(x(e)-y(e))/|\rho_L(e)|$ over $e\in E_L$ such that $\rho_L(e)<0$ and $x(e)>y(e)$. \hfill$\qed$
  \end{lemma}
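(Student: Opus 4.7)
The existence portion is already contained in the proof of Lemma~\ref{lm:second}: that argument constructs an active subgraph $\orar\Gamma'\subseteq\orar\Gamma(x)$ closed under forward reachability and picks any maximal strong component $L\in\Lrot(x)$ inside it, so that for sufficiently small $\lambda>0$ the assignment $x':=x+\lambda\rho_L$ satisfies $y\preceq x'\prec x$. What remains is to identify the precise maximum admissible $\lambda$, which I claim is the minimum of the three quantities in the statement.

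Three distinct obstructions can arise as $\lambda$ grows from $0$. First, stability of $x'$ breaks exactly when $\lambda>\tau_L$, by Lemma~\ref{lm:tau_L} and the defining maximality of $\tau_L$. Second, on an edge $e=fw\in E_L$ with $\rho_L(e)>0$ (necessarily $e\in D_f$ for some $f\in V_L\cap F$) the value $x'(e)$ is growing, and if $y(e)>x(e)$ then the bound $\lambda\le (y(e)-x(e))/\rho_L(e)$ is exactly the condition $x'(e)\le y(e)$. Third, symmetrically, on $e\in E_L$ with $\rho_L(e)<0$ (necessarily $e\in H_w$ for some $w\in V_L\cap W$) and $y(e)<x(e)$, the bound $\lambda\le(x(e)-y(e))/|\rho_L(e)|$ is exactly $x'(e)\ge y(e)$.

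For sufficiency, with $\lambda$ taken to be the overall minimum, I would re-run the final part of the proof of Lemma~\ref{lm:second}: relations \refeq{AwD} and \refeq{AfD} combine to yield $C_w(y_w\vee x'_w)=y_w$ for every $w\in V^\gtrless\cap W$, whence $y\succeq_W x'$ and, by polarity \refeq{stab-mix}(a), $y\preceq x'$. The only place ``$\lambda$ sufficiently small'' was used in that derivation was to ensure $y(e)\le x'(e)\le x(e)$ on $A_w=H_w$ and the analogous non-overshoot condition on $A_f=D_f$ coming from \refeq{AfD}; the stated bounds furnish exactly these inequalities, now possibly with equality at an edge where one of the minima is attained, and equality is harmless because characterization \refeq{zzp} asks only for weak inequalities on the tail.

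For extremality I would reason at a witness edge $e^\ast$ realizing the minimum. If $\lambda'>\tau_L$, stability itself fails; otherwise the second or third bound is tight at some $e^\ast\in E_L$, any $\lambda'$ strictly larger than the minimum makes $x'(e^\ast)$ strictly cross $y(e^\ast)$, and applying \refeq{zzp} at the worker endpoint $w^\ast$ of $e^\ast$ yields $C_{w^\ast}(y_{w^\ast}\vee x'_{w^\ast})\ne y_{w^\ast}$, violating $y\succeq_W x'$ and hence $y\preceq x'$. The main technical point, which I expect to be the hardest piece, is precisely this last verification in the case $\pic_w(y)>\pic_w(x)$ of \refeq{w><}: there the critical tie at $w$ genuinely shifts between $x$ and $y$, and one must carefully track how a single-edge overshoot at $e^\ast$ interacts with this change of critical tie so as to locate $e^\ast$ in the tail $T_{w^\ast}(y)$ and read off the violation of \refeq{zzp} there.
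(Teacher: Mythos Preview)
Your proposal takes essentially the same approach as the paper, which offers no independent argument for this lemma but simply notes that it ``can be extracted from the above proof'' of Lemma~\ref{lm:second}; your plan carries out precisely that extraction, and in fact supplies more detail on the extremality direction than the paper does.
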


Also, using Lemma~\ref{lm:second}, one can conclude with the following
   \begin{corollary} \label{cor:xmax}
For $x\in\Sscr$, the following are equivalent:
  \begin{itemize}
  \item[\rm(i)]  $x\ne\xmax$;
  \item[\rm(ii)] system~\refeq{lp1}--\refeq{lp4} has a solution with $\eps>0$;
  \item[\rm(iii)] the set of active edges (see Definition~4) is nonempty;
  \item[\rm(iv)] the active graph $\orar\Gamma(x)$ has a strongly connected component.
   \end{itemize}
  \end{corollary}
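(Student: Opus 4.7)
The plan is to establish the chain of implications (i) $\Rightarrow$ (ii) $\Rightarrow$ (iii) $\Rightarrow$ (iv) $\Rightarrow$ (i).

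For (i) $\Rightarrow$ (ii): since $\xmax$ is the bottom element of $(\Sscr,\succeq_F)$, the assumption $x\ne\xmax$ gives $\xmax\prec x$. Applying Lemma~\ref{lm:second} with $y:=\xmax$ produces, for sufficiently small $\eps>0$, some $x'\in X^\eps(x)$ with $\xmax\preceq x'\prec x$. By the definitions of $X^\eps(x)$ in \refeq{Xeps} and $\Pscr^\eps(x)$ in \refeq{polPscr}, this $x'$ is of the form $x+\Delta^{\phi,\psi}$ for some $(\phi,\psi)\in\Pscr^\eps(x)$, i.e.\ for a solution of \refeq{lp1}--\refeq{lp4}. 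For (ii) $\Rightarrow$ (iii), note that equation \refeq{lp4} demands $\sum_{f\in F^+} h_f\phi_f=\eps>0$, which forces $F^+$ to be nonempty; for any $f\in F^+$, property~\refeq{f-w} gives $D_f\ne\emptyset$, and every edge of $D_f$ is active for $F$ by Definition~4, so the set of active edges is nonempty.

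The main step is (iii) $\Rightarrow$ (iv), which requires lifting ``some active edge exists'' to ``some directed cycle exists''. The key observation is that every vertex of the directed active graph $\orar\Gamma(x)$ has positive out-degree. Indeed, $V_\Gamma\subseteq F^+\cup W^+$, because active edges connect only $F^+$ with $W^+$. For $f\in V_\Gamma\cap F^+$, the out-edges of $f$ in $\orar\Gamma$ are precisely the edges of $D_f$, which is nonempty by~\refeq{f-w}. For $w\in V_\Gamma\cap W^+$, the head $H_w$ is nonempty (any fully filled vertex has nonempty head, as recalled at the start of Sect.~\SEC{rotat}), and by~\refeq{f-w} every edge of $H_w$ is incident to a vertex of $F^+$, hence is active and corresponds to an edge of $\orar\Gamma$ leaving $w$. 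Now in a finite directed graph in which every vertex has positive out-degree, walking forward from any vertex must eventually revisit one, producing a directed cycle; such a cycle has length at least $2$ and lies inside a maximal strong component (in the sense of the paper, with $|V_L|>1$).

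Finally, (iv) $\Rightarrow$ (i) is immediate from the rotation machinery: given a strong component, any maximal one $L\in\Lrot(x)$ admits, via Lemma~\ref{lm:strong_comp} and the remark following it, a positive admissible weight, so $\tau_L>0$. For any $\lambda\in(0,\tau_L]$, Lemma~\ref{lm:tau_L} yields a stable assignment $x':=x+\lambda\rho_L$ with $x'\prec x$, which witnesses that $x$ is not the minimum of $(\Sscr,\succeq_F)$, i.e.\ $x\ne\xmax$. The only step that involves a small argument beyond quoting previous results is the positive-out-degree observation in (iii) $\Rightarrow$ (iv); the remaining implications are essentially bookkeeping on top of Lemmas~\ref{lm:tau_L}, \ref{lm:second} and \ref{lm:strong_comp}.
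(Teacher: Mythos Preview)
Your proof is correct and follows essentially the same approach as the paper, relying on Lemmas~\ref{lm:tau_L}, \ref{lm:second}, \ref{lm:strong_comp}, and property~\refeq{f-w}. The only organizational difference is that the paper proves the pairwise equivalences (i)$\Leftrightarrow$(ii), (iii)$\Leftrightarrow$(iv) (the latter by a terse appeal to the Cleaning procedure) and links them via (iii)$\Rightarrow$(ii) using Lemma~\ref{lm:reg_comp}, whereas you run a single cycle (i)$\Rightarrow$(ii)$\Rightarrow$(iii)$\Rightarrow$(iv)$\Rightarrow$(i); your explicit out-degree argument for (iii)$\Rightarrow$(iv) is exactly what the paper's reference to the Cleaning procedure amounts to.
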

  
Indeed, if (i) is valid, then for $y:=\xmax$, we have $y\prec x$, and Lemma~\ref{lm:second} implies validity of~(ii). Conversely, if system~\refeq{lp1}--\refeq{lp4} is solvable for $\eps>0$, then (by Lemma~\ref{lm:tau_L}) there exists a stable $x'$ such that $x'\prec x$, yielding~(i). The equivalence of~(iii) and~(iv) can be concluded from Cleaning procedure and~\refeq{f-w}. The implication (iii)$\to$(ii) follows from Lemma~\ref{lm:reg_comp}. The converse implication (ii)$\to$(iii) can be concluded from the discussion in the beginning part of this section.

\section{Additional properties of rotations} \label{sec:addit_prop}

In this section we discuss additional (in particular, computational) aspects related to rotations and expose an appealing example of rotations.

We will refer to the update $x\rightsquigarrow x':=x+\lambda\rho_L$, where $L\in\Lrot(x)$ and $\lambda$ is an admissible weight for $x$ and the rotation $\rho_L$, as the rotational \emph{shift} of weight $\lambda$ along $\rho_L$ applied to $x$. We also say that $x'$ is obtained from $x$ by the shift by $\lambda\rho_L$. If $\lambda=\tau_L(x)$, the shift is called \emph{full} (w.r.t. $x$). As before, we write $F_L$ for $F\cap V_L$, and $W_L$ for $W\cap V_L$. One can see that
  \begin{numitem1} \label{eq:tau_L}
the maximal admissible weight $\tau_L(x)$ is equal to the minimum of the following values: (a)  $x(e)/|\rho_L(e)|$ over all $e\in H_w$, $w\in W_L$; (b)  $(b(e)-x(e))/\rho_L(e)$ over all $e\in D_f$, $f\in F_L$; and (c) $(x(e)-x(e'))/(|\rho_L(e)|+\rho_L(e'))$ over all pairs $e,e'\in E_L$ such that $e\in H_w$ and $e'\in \pic_w(x)-H_w$ among $w\in W_L$.
  \end{numitem1}
(Here (a) and (b) ensure that $x'$ is nonnegative and does not exceed the upper bounds $b$, and (c) provides (when the critical tie preserves) the inclusion $H_w(x)\subseteq H_w(x')$ for $w\in W_L$. In other words, in case~(c), the new (increased) assignment in any edge of $\pic_w-H_w$ should not exceed the new (decreased) assignment on an edge of $H_w$.)
 \smallskip
 
Given a stable $x$, the extraction of critical ties and their heads, and, further, the construction of the active graph $\orar\Gamma$, take linear time $O(|E|)$. The task of finding the strong components in $\orar\Gamma$ can also be solved in linear time (e.g. using the first linear time algorithm for this task in~\cite{karz}). Then we can easily extract the set $\Lrot$ of maximal strong components. Next, for each $L\in\Lrot$, we efficiently construct the rotation $\rho_L$. To do this, we first solve system~\refeq{lp1}--\refeq{lp4} (in which $F^+$ and $W^+$ are replaced by $F\cap V_L$ and $W\cap V_L$, respectively, and one puts $\eps=1$, say). And second, we transform the obtained solution $(\phi,\psi)\in\Qset_+^{F^+}\times\Qset_+^{W^+}$ into the rotation $\rho_L\in\Zset^E$. 

(Here the system has size $(n+1)\times n$ (where $n:=|V|$) and integer entries $a_{ij}$ with $|a_{ij}|\le n$. It is solved by Gaussian eliminations in time $O(n^3)$ (and all intermediate data during the process are rational numbers with numerators and denominators whose encoding sizes (viz. numbers of bits to record) are of order $O(n^2\log n)$, as is shown by Edmonds for a general case, see e.g.~\cite[Sec.~4.3]{KV} or~\cite{schr}. Now to form the desired rotation, the obtained $(\phi,\psi)$ is scaled to get all entries integer-valued, and then we get rid of the common divisor by use of Euclidean algorithm (which takes $O(n^3\log n)$ time in total, cf.~\cite[Sec.~4.2]{KV}). At present we cannot offer a more combinatorial efficient method to construct the rotation related to a strong component $L$.)  

Finally, using~\refeq{tau_L}, one can compute the maximal rotational weights $\tau_L$ for all rotations $\rho_L$ in time $O(|E|)$ (since the sets $V_L$ for $L\in\Lrot$ are pairwise disjoint). 

Summarizing the above observations, we obtain the following complexity result.
  \begin{prop} \label{pr:time}
Given $x\in\Sscr$, we can fulfill, in time polynomial in $|E|$, the following tasks: {\rm(a)} recognize whether $x=\xmax$, and {\rm(b)} in case $x\ne\xmax$, find all rotations $\rho_L$, $L\in\Lrot(x)$, and their maximal admissible weights $\tau_L(x)$. Each entry in a rotation has the encoding size bounded by a polynomial in $|E|$, and similarly for the encoding sizes of intermediate data appeared in the process of computing the rotations.
  \end{prop}

Next, based on the analysis from the previous section, one can estimate the number of rotational transformations (shifts) applied in the process of moving from the minimal stable assignment $\xmin$ or from an arbitrary stable $x\in \Sscr$ to another stable assignment or to $\xmax$. Here the following observations are of use (cf.~\refeq{adm_weight} and~\refeq{tau_L}):
  \begin{numitem1} \label{eq:rotat_shift}
Let $x'$ be obtained from $x\in\Sscr$ by the shift of weight $\lambda$ along a rotation $\rho_L$; then:
  \begin{itemize}
\item[(a)] for each $f\in F^=$, either $\pic_f(x')=\pic_f(x)$ and $H_f(x')\subseteq H_f(x)$, or $\pic_f(x')$ is worse than $\pic_f(x)$; in its turn, for each $w\in W^=$, either $\pic_f(x')=\pic_f(x)$ and $H_w(x')\supseteq H_w(x)$, or $\pic_w(x')$ is better than $\pic_w(x)$;
\item[(b)] for $\lambda=\tau_L(x)$, at least one of the following takes place: (i) for some $f\in F^=$, either $\pic_f(x')=\pic_f(x)$ and $H_f(x')\subset H_f(x)$, or $\pic_f(x')<\pic_f(x)$; (ii) for some $w\in W^=$, either $\pic_w(x')=\pic_w(x)$ and $H_w(x')\supset H_w(x)$, or $\pic_w(x')>\pic_w(x)$.
  \end{itemize}
  \end{numitem1}
  
Note that this monotonicity behavior leads to the following:
  \begin{numitem1} \label{eq:monoton}
in the process of rotational shifts, for each edge $e\in fw\in E$, the updates of  current assignments $x(e)$ can be chronologically divided into two phases: at the first phase, the value at $e$ is monotone non-decreasing (when $e$ occurs in ``heads'' concerning the part $F$); and at the second phase, the value at $e$ is monotone non-increasing (when $e$ occurs in ``heads'' concerning the part $W$).
  \end{numitem1}
  
Also, using~\refeq{rotat_shift}, we obtain the following
  \begin{lemma} \label{lm:2E}
{\rm(i)} If  a stable assignment is constructed from another one by a sequence of full rotational shifts, then the number of shifts is at most $2|E|$.
 
{\rm(ii)} Any $x\in\Sscr$ can be constructed from $\xmin$ by $2|E|$ rotational shifts.
 \end{lemma}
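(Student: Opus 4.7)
The plan is to establish (i) by a monotonicity counting argument grounded in \refeq{rotat_shift}, and then to deduce (ii) from (i) together with a separate count of partial shifts based on the two-phase property \refeq{monoton}.

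For (i), I would call an \emph{event at} $v \in F^=\cup W^=$ after a shift $x\rightsquigarrow x'$ either a strict change of the critical tie $\pic_v$, or a strict change of the head $H_v$ (shrinkage for $v \in F^=$, growth for $v \in W^=$) occurring while $\pic_v(x')=\pic_v(x)$. By \refeq{rotat_shift}(a) all events at a fixed $v$ run monotonically in a single direction, so the critical-tie events at $v$ number at most $|E_v|$ (one per tie of $\Pi_v$, each visited once), and the head-only events at $v$ also number at most $|E_v|$ (each modifies $H_v$ by at least one edge inside the current critical tie $\pi^i$, and $\sum_i |\pi^i| = |E_v|$). Summing over $v$ bounds the total events by $\sum_v 2|E_v| = 4|E|$. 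Since \refeq{rotat_shift}(b) forces each full shift to produce at least one such event, the chain from $\xmin$ to $x$ has $O(|E|)$ full shifts.

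For (ii), I would iteratively apply Lemma~\ref{lm:strength_xp} with target $y$, starting from $x_0 := \xmin$. As long as $x_i \ne y$ we have $y \prec x_i$ (since $\xmin$ is the $\succeq_F$-maximum of $\Sscr$), and the lemma yields $x_{i+1} := x_i + \lambda_i \rho_{L_i}$ with $y \preceq x_{i+1} \prec x_i$ for some $\lambda_i \le \tau_{L_i}$. Call the step \emph{full} if $\lambda_i = \tau_{L_i}$ and \emph{partial} otherwise. The event count from (i) applies verbatim to this mixed sequence (both kinds of shift change $\pic_v$ and $H_v$ only monotonically, so the total event budget remains $O(|E|)$), and since only full shifts are guaranteed by \refeq{rotat_shift}(b) to consume an event, the number of full shifts is $O(|E|)$. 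For the partial shifts, the explicit maximum-weight formula in Lemma~\ref{lm:strength_xp} forces some edge $e_i \in E_{L_i}$ to satisfy $x_{i+1}(e_i) = y(e_i)$. By \refeq{monoton}, the trajectory $i \mapsto x_i(e)$ for each fixed $e$ is non-decreasing in a first phase and non-increasing in a second phase, so the specific level $y(e)$ is attained at most twice. Hence each edge can serve as a limiting edge for at most two partial shifts, giving at most $2|E|$ partial shifts, and the two bounds add to $O(|E|)$.

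The main obstacle I anticipate is the partial-shift bookkeeping: one must check that \refeq{monoton}, phrased for the generic rotational process, genuinely applies to the particular interleaved sequence generated by iterating Lemma~\ref{lm:strength_xp}, and one should pin down a canonical limiting edge $e_i$ whenever several edges simultaneously tighten in a single partial shift, so that the ``at most twice per edge'' accounting does not double count. Both points look routine once the two-phase structure is invoked for the combined trajectory, but they are the delicate spots in turning this plan into a clean proof.
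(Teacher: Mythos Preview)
Your argument for~(i) is essentially the paper's: both charge each full shift to a monotone event (a critical-tie move or a strict head change) and bound the total number of events by $O(|E|)$; you merely organize the bookkeeping by vertex rather than by edge.

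For~(ii) the two proofs diverge. The paper argues that in any sequence of rotational shifts from $\xmin$ to $x$, repeated occurrences of the same rotation $\rho_L$ can be merged into a single shift (since a non-full shift along $\rho_L$ leaves $L$ in $\Lrot$ of the updated assignment); after merging, every non-full shift corresponds to a distinct element of $\Lrot(x)$, giving $O(|V|)$ of them, and the full ones are bounded by~(i). You instead iterate Lemma~\ref{lm:strength_xp} with the maximum admissible weight and charge each partial shift to a limiting edge $e$ at which $x_{i+1}(e)=y(e)$; the two-phase monotonicity~\refeq{monoton} then caps the partial shifts at $2|E|$. Both arguments are correct. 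The paper's merging trick is more structural and dovetails with the commutation reasoning used later for Proposition~\ref{pr:equal_rot}; your direct charging is self-contained and sidesteps the need to reorder or combine rotations. The concerns you flag at the end are not real obstacles: \refeq{monoton} is derived from~\refeq{rotat_shift}(a), which applies to \emph{every} rotational shift, so it holds verbatim for the sequence produced by iterating Lemma~\ref{lm:strength_xp}; and when several edges tighten simultaneously at a partial step you may charge the step to any one of them, since once $x_{i+1}(e)=y(e)$ the strict inequality in the weight formula of Lemma~\ref{lm:strength_xp} excludes $e$ from being limiting again in the same (increasing or decreasing) phase.
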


\begin{proof}
To show~(i), consider corresponding cases in~\refeq{rotat_shift} and associate with a full shift $x\rightsquigarrow x'$ in a given sequence one edge $e=fw$ as follows. Choose $e\in H_f(x)-H_f(x')$ in case $H_f(x')\subset H_f(x)$; choose $e\in H_f(x)$ in case $\pic_f(x')<\pic_f(x)$; choose $e\in H_w(x')-H_w(x)$ in case $H_w(x')\supset H_w(x)$; and choose $e\in H_w(x')$ in case $\pic_w(x')>\pic_w(x)$. Then each edge is associated with at most two full shifts in the sequence, whence~(i) follows.   

To show~(ii), it is useful to note that, given $x\in\Sscr$, the task of constructing $x$ by rotational shifts from $\xmin$ is equivalent to that of constructing $\xmax$ from $x$ (for we can ``swap'' the parts $F$ and $W$ and ``reverse'' rotations). So we can consider the latter problem. (By the way, if needed, we can efficiently verify whether a function $x$ on $E$ obeying $b,q$ is indeed a stable assignment, by checking the absence of blocking edges.) 

It can be seen (e.g., from Lemma~\ref{lm:strength_xp}) that $\xmax$ can be obtained from any $x\in \Sscr$ by a finite sequence of rotational shifts with some admissible weights. If in this sequence, one and the same rotation $\rho_L$ (equivalently, the same maximal strong component $L$ in current active graphs) occurs several times, then we can combine the shifts related to these occurrences into one shift along $\rho_L$, summing up the rotational weights. (Here we use the fact that if a shift along $\rho_L$ is not full, then $L$ continues to be a maximal strong component in the current active graph, cf. the proof of Proposition~\ref{pr:many}.) Thus, the initial sequence can be transformed into another one, going from $x$ to $\xmax$ as well, so that each rotation in it occur exactly once. Then it is easy to see that all rotational shifts are full, and now the result follows from~(i).
   \end{proof}
 
\noindent\textbf{Remark 2.} Note that in general the set $\Sscr$ of stable assignments in SMP need not be convex in $\Rset^E$. (This is so already for SAP with  $c\equiv 1$ and $q\equiv 1$. For example, take as $G$ the 6-cycle $(v_0,e_1,v_1,\ldots,e_6,v_6=v_0)$ and add to it edge $a=v_1v_4$. Assign $e_i<e_{i+1}$ for $i=1,\ldots,6$ (letting $e_7:=e_1$), $e_1<a<e_2$ and $e_4<a<e_5$. Then the incidence vectors of $\{e_1,e_3,e_5\}$ and $\{e_2,e_4,e_6\}$ give stable allocations but their half-sum $x$ does not, since the edge $a$ is blocking for $x$.)  However, using Proposition~\ref{pr:many}, one can give the following geometric description of this set via rotations. Let $\Sscr^{\rm full}$ be the set of stable assignments that can be obtained from $\xmin$ by a sequence of full rotational shifts. Then $\Sscr$ is the union, over $x\in\Sscr^{\rm full}$, of the rectangular parallelepipeds $x+Q(x)$, where $Q(x)$ is the Minkowsky sum of orthogonal segments $[0,\tau_L\rho_L]$, $L\in\Lrot(x)$.
  \medskip   

\noindent\textbf{Example 3.} As is mentioned in Remark~1 from Sect.~\SEC{rotat}, there exist rotations with ``big'' values. An example is illustrated in Fig.~\ref{fig:fract}.
Here we draw an active graph $\Gamma$ with vertex parts $F=\{f_1,f_2,f_3\}$ and $W=\{w_1,w_2,w_3\}$, the edges active for $F$ ($W$) are drawn by bold (resp. dotted) lines, and the numbers on edges point out values of rotation $\rho$ (this is a normalized aligned $Z$-circulation); they vary from $-8$ through 7. 

Note that this rotation appears in the following instance of SMP: $G=\Gamma$; the weak orders at vertices of $G$ are defined as:
 \begin{eqnarray*}
 \mbox{for vertex $f_i$:} && f_iw_1\sim f_iw_2\sim f_iw_3,\quad i=1,2,3; \\ 
 \mbox{for vertex $w_1$:} && f_1w_1> f_3w_1>f_2w_1; \\ 
 \mbox{for vertex $w_2$:} && f_1w_2> f_2w_2>f_3w_2; \\ 
 \mbox{for vertex $w_3$:} && f_2w_3> f_1w_3\sim f_3w_3. 
 \end{eqnarray*}
For two arbitrarily chosen numbers $0<a<b$,  the upper bound $b(e)$ for each edge $e$ is equal to $b$, and the quota $q(v)$ of each vertex $v$ is equal to $3a$. Let $x(e):=a$ for all edges $e$ of $G$. One can see that: all vertices in $F^=(x)=F$ and $W^=(x)=W$ are fully filled by $x$; for all $w_j\in W$, the critical ties $\pic_{w_j}(x)$ consist just of the dotted edges and they coincide with the heads $H_{w_j}(x)$; for all $f_i\in F$, the heads $D_{f_i}(x)$ consist of the bold edges. As a result, $x$ is stable. All edges are unsaturated by $x$, and one can see that the directed active graph $\orar\Gamma(x)$ has a unique strong component $L$ involving all edges. Then $\rho_L=\rho$. Applying~\refeq{tau_L}, we obtain that the maximum rotational weight $\tau_L(x)$ is equal to the minimum of $a/8$ and $(b-a)/7$.

 \begin{figure}[htb]
\begin{center}
\vspace{-0.1cm}
\includegraphics[scale=0.8]{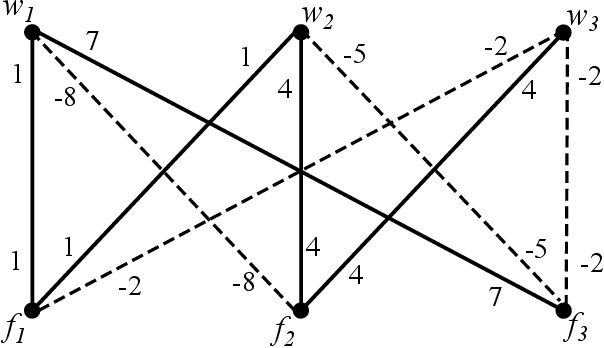}
\end{center}
\vspace{-0.5cm} 
\caption{A  rotation with ``big'' values}
 \label{fig:fract}
\end{figure}

\medskip
\noindent\textbf{Remark~3.} We can extend the above example as follows. Take copies $\Gamma^1,\ldots,\Gamma^k$ of the graph $\Gamma$, denoting the copies of vertices $f_i$ and $w_i$ ($i=1,2,3$) in the graph $\Gamma^j$ ($j=1,\ldots,k$) as $f_i^j$ and $w_i^j$, respectively. For $j=1,\ldots,k-1$, identify the vertex $w_1^j$ of $\Gamma^j$ with the vertex $w_3^{j+1}$ of $\Gamma^{j+1}$, denoting the obtained vertex as $\hat w^j$. Let $\hat\Gamma$ be the resulting graph, the union of $\Gamma^1,\ldots,\Gamma^k$. (In particular, $\hat w^1$ has three $F$-active (``bold'') edges, namely, $f_1^1w_1^1,\, f_3^1w_1^1,\, f^2_2w_3^2$, and three $W$-active (``dotted'') edges, namely, $f_2^1w_1^1,\, f_1^2w_3^2,\, f_3^2w_3^2$ (where $w_1^1=w_3^2=\hat w^1$).) Let $\hat \rho$ be the rotation generated by $\hat\Gamma$, and $\rho^j$ its restriction on $\Gamma^j$. One can see that $\rho^1$ coincides with the rotation $\rho$ for $\Gamma$ (taking into account that $\rho^1$ is determined by the balance and align conditions on the five vertices $f_1^1,f_2^1,f_3^1,w_2^1,w_3^1$). Then $\hat\rho(f_2^1\hat w^1)=\rho(f_2w_1)=-8$, which implies that $\hat\rho(f_1^2\hat w^1)=\hat\rho(f_3^2\hat w^1) =\hat\rho(f_2^1\hat w^1)=-8$. Since $\rho(f_1w_3)=\rho(f_3w_3)=-2$, we obtain that $\rho^2$ is expressed as $4\rho$ (under identifying $\Gamma^2$ with $\Gamma$). Arguing similarly for $j=2$, we conclude that $\rho^3$ is expressed as $4\rho^2$, or as $16\rho$. Eventually, $\rho^k$ is expressed as $4^{k-1}\rho$. 

Thus, the rotation $\hat\rho$ in our construction has the $\ell_\infty$-norm $2\cdot 4^{k-1}$, whereas the graph $\hat\Gamma$ has $5k+1$ vertices, thus providing an exponential grow of the rotation norms.
 
  
\section{The poset of rotations} \label{sec:poset_rot}

In this section we explain that the stable assignments in our mixed model admit an efficient representation, via the so-called closed sets (weighted analogs of ideals) in a poset generated by the rotations. 

We have seen that any stable assignment $x\in\Sscr$ can be obtained from  $\xmin$ (which is the best for $F$ and the worst for $W$) by use of a sequence of $O(|E|)$ rotational shifts, see Lemma~\ref{lm:2E}. The case $x=\xmax$ is of an especial interest for us.
 \medskip
 
\noindent\textbf{Definition 7.}  Let $T$ be a sequence $x_0,x_1,\ldots,x_N$ of stable assignments such that each $x_i$ is obtained from $x_{i-1}$ by a rotational shift, i.e., $x_i=x_{i-1}+\lambda_i\rho_{L_i}$ (where $\rho_{L_i}$ is the rotation generated by an element (maximal strong component) $L_i$ of the set $\Lrot(x_{i-1})$, and $\lambda_i$ is an admissible weight: $\lambda_i\le\tau_{L_i}(x_{i-1})$; see Definition~6. We say that $T$ is \emph{non-expensive} is all rotations $\rho_{L_i}$ are different (as vectors in $\Rset^E$). In particular, $T$ is non-expensive if all involved shifts are full, in the sense that each $\lambda_i$ is the maximal rotational weight for $L_i$ at the moment it applies. A non-expensive $T$ with $x_0=\xmin$ and $x_N=\xmax$ is of most interest for us; such a $T$ is called a (complete) \emph{route}. The set of all routs is denoted by $\Tscr^0$. 

In what follows we will deal with only non-expensive sequences $T=(x_0,\ldots,x_N)$. When $x_0=\xmin$ and $x=x_N$ is arbitrary (resp. $x=x_0$ is arbitrary and $x_N=\xmax$), we call $T$ a \emph{lower} (resp. \emph{upper}) \emph{semi-route}. The set of lower (resp. upper) semi-routes for $x\in \Sscr$ is denoted by $\Tscrlow(x)$ (resp. $\Tscrup(x)$). The set of rotations used to form a route or semi-route $T$ is denoted as $\Rscr(T)$. The set of pairs $(\lambda,\rho)$ formed by rotations $\rho\in\Rscr(T)$ and their weights $\lambda$ used for $T$ is denoted by $\Omega(T)$.
 \medskip

An important fact is that all routes use the same set of rotations along with their weights. In other words, a strong component $L$ determining the rotation $\rho_L$ which is used in one route should necessarily appear in any other one and, moreover, each rotation uses the same weight in all occurrences. Initially the phenomenon of this sort was revealed for rotations in the stable marriage problem in~\cite{IL}; subsequently, this was established for other stability problems, in particular, for SAP in~\cite{DM}.

Borrowing a method of proof for SAP from~\cite{DM}, we show a similar property for SMP.
 \begin{prop} \label{pr:equal_rot}
The set of pairs $\Omega(T)$ is the same for all routes $T\in\Tscr^0$.
  \end{prop}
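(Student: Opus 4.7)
The plan is to argue by induction on the length $N$ of the route $T=(x_0,x_1,\ldots,x_N)$, proving the slightly more general statement that any two non-expensive sequences of full rotational shifts from a common $x\in\Sscr$ to $\xmax$ induce the same multiset $\Omega$ of (weight, rotation) pairs; the proposition is then the case $x=\xmin$. The base case $N=0$ forces $x=\xmax$ and both sequences to be empty.

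The heart of the argument is a commutativity/exchange lemma: for any $x\in\Sscr$ and distinct $L,L'\in\Lrot(x)$, the full shift $x\mapsto x'=x+\tau_L(x)\rho_L$ preserves $L'$ as a maximal strong component of the updated active graph $\orar\Gamma(x')$, with $\tau_{L'}(x')=\tau_{L'}(x)$. This rests on two simple observations. First, distinct maximal strong components of a digraph are vertex-disjoint, so $V_L\cap V_{L'}=\emptyset$. Second, a full shift along $\rho_L$ only alters $x$ on edges incident to $V_L$, hence at every vertex of $V_{L'}$ the critical tie, head, tail, and set of active edges (for $F$ or $W$) are the same under $x$ and $x'$; in particular, the arcs of $\orar\Gamma(x')$ with an endpoint in $V_{L'}$ coincide with those of $\orar\Gamma(x)$, so $L'$ remains strongly connected and no new outgoing arc leaves $V_{L'}$, and the quantities entering formula~\refeq{tau_L} at $L'$ are untouched. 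Combined with Proposition~\ref{pr:many}, this yields that the two consecutive full shifts $x\to x'\to x'+\tau_{L'}(x)\rho_{L'}$ can be exchanged without altering the endpoint or the (weight, rotation) pairs used.

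For the inductive step, let $(\tau_1,\rho_{L_1})$ be the first pair used in $T$ at $x$. I first show that $(\tau_1,\rho_{L_1})$ also appears in $T'=(x=x'_0,x'_1,\ldots,x'_M=\xmax)$. Iterating the observation above along $T'$, as long as every rotation applied so far has its vertex set disjoint from $V_{L_1}$, the component $L_1$ persists as an element of $\Lrot(x'_j)$ with unchanged weight $\tau_{L_1}$. By Corollary~\ref{cor:xmax} we have $\Lrot(\xmax)=\emptyset$, so at some step of $T'$ a rotation whose vertex set meets $V_{L_1}$ must be applied; vertex-disjointness of maximal strong components in the prevailing active graph then forces that rotation to be exactly $\rho_{L_1}$, applied with the unchanged weight $\tau_1$. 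Iterating the exchange lemma, I commute this occurrence of $\rho_{L_1}$ to the head of $T'$, obtaining a route $T''$ with $\Omega(T'')=\Omega(T')$ whose first step matches the first step of $T$. Deleting this common first step produces two non-expensive full-shift sequences from $x_1:=x+\tau_1\rho_{L_1}$ to $\xmax$, of lengths $N-1$ and $M-1$, and the inductive hypothesis finishes the proof.

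The main obstacle is the verification of the exchange lemma itself, specifically that the \emph{maximality} of the strong component $L'$ (not merely its strong connectedness) survives the shift along $\rho_L$, together with the numerical equality $\tau_{L'}(x')=\tau_{L'}(x)$. Both points follow from vertex-disjointness and locality, but genuinely require checking that no arc of $\orar\Gamma(x')$ escapes $V_{L'}$ into either $V_L$ or the rest of the graph, and that each of the three types of bound appearing in~\refeq{tau_L} is evaluated on edges and values of $x$ that the shift along $\rho_L$ leaves intact.
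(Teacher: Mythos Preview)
Your argument is correct and rests on the same key fact the paper uses---that distinct maximal strong components $L,L'\in\Lrot(x)$ give commuting full shifts---but you organize the proof differently. The paper runs a short diamond-lemma argument: call $x\in\Sscr$ \emph{good} if all upper semi-routes from $x$ to $\xmax$ have the same $\Omega$, take a maximal bad $x$, observe that the first rotations of two disagreeing semi-routes are distinct elements of $\Lrot(x)$, commute them to reach a common second assignment, and invoke goodness of the immediate successors to obtain a contradiction. Your version instead inducts on route length, proves by a persistence argument that the first rotation $L_1$ of $T$ must eventually be applied somewhere in $T'$ (using $\Lrot(\xmax)=\emptyset$ to force this), and then bubble-sorts that occurrence to the front. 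The paper's route is shorter; yours is more constructive and makes the termination mechanism explicit. (A minor point: your induction should be on $|T|+|T'|$ or $\max(|T|,|T'|)$ rather than on $|T|$ alone, since a priori the two lengths could differ.)

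One point you pass over (as does the paper, in fact): the claim ``$x$ unchanged on edges incident to $V_{L'}$, hence $D_f(x')=D_f(x)$ for $f\in F_{L'}$'' is not purely local, because membership of an edge $fw$ in $\tilde H_f$ or $\hat H_f^i$ depends on whether $fw\in T_w(x)$, and such $w$ may lie in $V_L$. The claim is nevertheless true: by~\refeq{adm_weight}(b) a full shift along $\rho_L$ can only enlarge $H_w$ or improve $\pic_w$ for $w\in W_L$, so no edge $fw$ with $w\in V_L$ can newly enter $T_w(x')$; since the edges already in $D_f(x)$ have their other end in $V_{L'}$ (by maximality of $L'$), one gets $D_f(x')=D_f(x)$. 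A sentence to this effect would close the only visible soft spot.
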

  \begin{proof}
~Let us say that a stable assignment $x\in\Sscr$ is \emph{good} if the set $\Omega(T)$ is the same for all upper semi-routes $T\in\Tscrup(x)$; otherwise $x$ is called \emph{bad}. We have to show that $\xmin$ is good. 

Suppose this is not so, and consider a maximal bad assignment $x$, in the sense that any $x'$ obtained from $x$ by a rotational shift of maximal admissible weight is already good. Since $x$ is bad, in the set $\Tscrup(x)$, there are two semi-routes $T'=(x,x',\ldots,\xmax)$ and $T''=(x,x'',\ldots,\xmax)$ such that $\Omega(T')\ne\Omega(T'')$.

Let $x'$ and $x''$ (occurring in $T',T''$) be obtained from $x$ by rotational shifts by $\lambda'\rho_{L'}$ and $\lambda''\rho_{L''}$, respectively. These shifts involve different maximal strong components $L'$ and $L''$ which are simultaneously contained in the active graph $\orar\Gamma(x)$. These components are disjoint, and  the above rotational shifts commute. Then we can form in $\Tscrup(x)$ two semi-routes; namely, $\tilde T'$ starting with the triple $x,x',x'+\lambda''\rho_{L''}$, and $\tilde T''$ starting with the triple $x,x'',x''+\lambda'\rho_{L'}$, which further continue using the same upper semi-route $\tilde T$ starting with $x+\lambda'\rho_{L'}+\lambda''\rho_{L''}$. 

Now since the assignment $x'$ is good (in view of the maximal choice of $x$), the parts of the semi-routes $T'$ and $\tilde T'$ going from $x'$ to $\xmax$ must determine the same sets of pairs $\Omega$, and similarly for the parts of $T''$ and $\tilde T''$ going from $x''$. But then $\Omega(T')=\{(\lambda',\rho_{L'}),\, (\lambda'',\rho_{L''})\}\cup \Omega(\tilde T)=\Omega(T'')$, contrary to the choice of $T',T''$.  
  \end{proof}
  
The above proposition we can extended as follows (by arguing in a similar way).
 \begin{prop} \label{pr:gomogen}
For any $x,y\in\Sscr$ with $x\succ y$, the sets of pairs $(\lambda,\rho)$ used to form non-expensive sequences (``middle semi-routes'') from $x$ to $y$ are the same. \hfill$\qed$
 \end{prop}

Next, relying on Proposition~\ref{pr:equal_rot}, we can denote the set of rotations generating shifts in an arbitrary (complete) route simply as $\Rscr$. The maximal admissible weight of a rotation $\rho=\rho_L\in\Rscr$ (which does not depend on a route) is denoted by $\tau_L$, or by $\tau(\rho)$. Like well-known methods for stable marriages or allocations, we can extend $\Rscr$ to a poset. In this poset rotations $\rho,\rho'\in\Rscr$ are compared as $\rho'\lessdot \rho$ if $\rho'$ is applied \emph{earlier than $\rho$ for all routes} in $\Tscr^0$; clearly relation $\lessdot$ is transitive. The elements $\rho$ are endowed with the weights $\tau(\rho)$, and when needed, we may denote this poset as $(\Rscr,\tau,\lessdot)$.
\medskip

\noindent\textbf{Definition 8.} ~A function $\lambda:\Rscr\to\Rset_+$ satisfying $\lambda(\rho)\le \tau(\rho)$ for all $\rho\in\Rscr$ is called \emph{closed} if  $\lambda(\rho)>0$ and $\rho'\lessdot \rho$  imply $\lambda(\rho')=\tau(\rho')$. A closed function $\lambda$ is called  \emph{fully closed} if for each $\rho\in\Rscr$, the value $\lambda(\rho)$ is either 0 or $\tau(\rho)$. The set of closed functions for $(\Rscr,\tau,\lessdot)$ is denoted by $\frakC=\frakC(\Rscr,\tau,\lessdot)$. It forms a distributive lattice in which for closed $\lambda,\lambda'$, the meet $\lambda\wedge\lambda'$ and join $\lambda\vee\lambda'$ are naturally defined by taking the coordinate-wise minimum and maximum, respectively.
\medskip

Since the maximal weights $\tau(\rho)$ for all $\rho\in\Rscr$ are strictly positive, we observe that for a closed function $\lambda$, both sets $I^+(\lambda):=\{\rho\colon \lambda(\rho)>0\}$ and $I^-(\lambda):=\{\rho\colon \lambda(\rho)=\tau(\rho)\}$ form \emph{ideals} of the poset $(\Rscr,\lessdot)$ (i.e. $\rho\in I^+(\lambda)$ and $\rho'\lessdot \rho$ imply $\rho'\in I^+(\lambda)$, and similarly for $I^-(\lambda)$).

A relationship between the stable assignments and closed functions is established as follows. For $x\in\Sscr$, consider a lower semi-route $T=(\xmin=x_0,x_1,\ldots,x_N=x)$, and for $i=1,\ldots,N$, let $x_i$ be obtained from $x_{i-1}$ by a rotational shift $\lambda_i\rho_i$. By Proposition~\ref{pr:gomogen}, the set $\Omega(T)=\{(\lambda_i,\rho_i)\colon i=1,\ldots,N)\}$ does not depend on a semi-route in $\Tscrlow(x)$. Moreover, the function $\lambda:\Rscr\to \Rset_+$, where $\lambda(\rho)$ is defined to be $\lambda_i$ if $\rho=\rho_i$, $i=1,\ldots,N$, and 0 otherwise, is closed. (Indeed, if $\rho=\rho_i$ and $\rho'\lessdot \rho$, then $\rho'$ must be used in any semi-route in $\Tscrlow(x_i)$, and its generating strong component $L'$ cannot be in $\Lrot(x_i)$, for otherwise $\rho'$ and $\rho$ can be applied in any order. It follows that $\lambda(\rho')=\tau(\rho')$, as required.) We define this closed function by $\omega(x)$.

 \begin{prop} \label{pr:biject}
The correspondence $x\mapsto \omega(x)$ determines a bijection between the lattice $(\Sscr,\prec_W)$ of stable assignments and the lattice $(\frakC,<)$ of closed functions for $(\Rscr,\tau,\lessdot)$.
   \end{prop}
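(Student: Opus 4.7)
The plan is to verify four steps in turn: (A) $\omega$ is well-defined; (B) $\omega$ is injective; (C) $\omega$ is surjective; and (D) $\omega$ is an order isomorphism between $(\Sscr,\prec_W)$ and $(\frakC,<)$. The main obstacle is step~(C).

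Step~(A) is essentially contained in the paragraph preceding the proposition: by Lemma~\ref{lm:2E}(ii) any $x\in\Sscr$ admits a lower semi-route $T\in\Tscrlow(x)$; by Corollary~\ref{cor:gomogen} the pair-set $\Omega(T)$ depends only on~$x$; and closedness of $\omega(x)$ follows from the observation that for $\rho'\lessdot\rho=\rho_i$ the strong component $L'$ generating $\rho'$ cannot belong to $\Lrot(x_{i-1})$ (otherwise $\rho$ and $\rho'$ could be exchanged in a route), forcing $\rho'$ to have been fully applied earlier in $T$. Step~(B) is immediate: along any $T=(\xmin=x_0,\ldots,x_N=x)\in\Tscrlow(x)$ one has $x=\xmin+\sum_{\rho\in\Rscr}\omega(x)(\rho)\,\rho$, so $\omega(x)$ recovers $x$.

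For step~(C), given $\lambda\in\frakC$, set $I^\pm:=I^\pm(\lambda)$; the plan is to build a lower semi-route whose $\Omega$ equals $\{(\lambda(\rho),\rho):\rho\in I^+\}$ in two stages. Since $I^-$ is an ideal of $(\Rscr,\lessdot)$ by closedness, pick a linear extension of $\lessdot$ listing the elements of $I^-$ first; by Proposition~\ref{pr:equal_rot} this ordering is realized by some complete route $T_0\in\Tscr^0$, and truncating $T_0$ after its first $|I^-|$ full shifts yields a stable $y\in\Sscr$. I then claim that $I^+\setminus I^-$ coincides with the set of minimal elements of $\Rscr\setminus I^-$ in $(\Rscr,\lessdot)$: for $\rho\in I^+\setminus I^-$ and any $\rho'\lessdot\rho$, the inequality $\lambda(\rho)>0$ combined with closedness forces $\lambda(\rho')=\tau(\rho')$, so $\rho'\in I^-$. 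Thus $I^+\setminus I^-$ is an antichain. By Proposition~\ref{pr:equal_rot} every such $\rho$ must occur in the continuation of $T_0$ from $y$ to $\xmax$, and, being minimal in $\Rscr\setminus I^-$, its generating component must already lie in $\Lrot(y)$. Proposition~\ref{pr:many} now lets me apply all rotations of $I^+\setminus I^-$ simultaneously at~$y$ with partial weights $\lambda(\rho)\le\tau(\rho)$, producing a stable assignment $x_\lambda=\xmin+\sum_{\rho\in\Rscr}\lambda(\rho)\,\rho$ whose associated closed function is $\lambda$ by construction.

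Step~(D) splits into two implications. If $x\preceq_W y$, then $x\succeq_F y$ by polarity~\refeq{stab-mix}(a), and iterating Lemma~\ref{lm:strength_xp} produces a non-expensive middle semi-route from~$x$ to~$y$. Concatenating with any lower semi-route to~$x$ gives a (possibly expensive) sequence from $\xmin$ to $y$ whose total shift equals $y-\xmin=\sum_{\rho}\omega(y)(\rho)\,\rho$; matching the coefficient of each $\rho$ and using non-negativity of the middle weights yields $\omega(x)\le\omega(y)$ coordinate-wise. Conversely, if $\omega(x)\le\omega(y)$ then $I^\pm(\omega(x))\subseteq I^\pm(\omega(y))$, and the two-stage construction of~(C) applied to $\omega(y)$ can be arranged so that~$x$ arises as an intermediate stable assignment, giving $x\preceq_W y$. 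Combined with the matching of extremes $\xmin\leftrightarrow 0$ and $\xmax\leftrightarrow\tau$, this makes $\omega$ an order isomorphism, hence a lattice isomorphism between the two distributive lattices. The crux of the whole proof is the availability claim in step~(C)---that after the full shifts along $I^-$ every rotation in $I^+\setminus I^-$ already sits in $\Lrot(y)$---which rests critically on Proposition~\ref{pr:equal_rot}.
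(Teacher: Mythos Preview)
Your outline---well-definedness, injectivity, surjectivity, order isomorphism---is a reasonable alternative to the paper's route, which instead proves directly that $\omega$ preserves meets and joins and then deduces bijectivity. However, both approaches hinge on the same technical fact, and that is exactly where your argument has a genuine gap.

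In step~(C) you write: ``pick a linear extension of $\lessdot$ listing the elements of $I^-$ first; by Proposition~\ref{pr:equal_rot} this ordering is realized by some complete route $T_0\in\Tscr^0$.'' Proposition~\ref{pr:equal_rot} says only that all routes share the same pair-set $\Omega$; it does \emph{not} say that every linear extension of $\lessdot$ arises as the rotation sequence of some route. The same problem recurs two sentences later: your assertion that each rotation minimal in $\Rscr\setminus I^-$ has its component already in $\Lrot(y)$ is unsupported. Minimality with respect to $\lessdot$ (defined as ``earlier in \emph{all} routes'') does not, by definition alone, guarantee applicability at $y$. What is missing is the paper's Claim, stated inside its proof of Proposition~\ref{pr:biject}: if $\rho$ is applicable to $\tilde x$ and $\rho'$ is applicable to $\tilde x+\lambda\rho$, then either $\rho'$ is already applicable to $\tilde x$, or $\rho\lessdot\rho'$. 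The contrapositive of this Claim is precisely what you invoke---that a $\lessdot$-minimal rotation over an exhausted ideal is currently applicable---but the Claim requires a nontrivial case analysis on how the active graph $\orar\Gamma$ and its maximal strong components evolve under a full shift (Cases~1 and~2 in the paper). You have not supplied this, and Proposition~\ref{pr:equal_rot} does not provide it.

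Two smaller points. First, your statement that $I^+\setminus I^-$ \emph{coincides} with the set of minimal elements of $\Rscr\setminus I^-$ is only half-proved and half-true: you establish (and only need) the inclusion $I^+\setminus I^-\subseteq\{\text{minimal elements}\}$, while a minimal element $\rho$ with $\lambda(\rho)=0$ shows the reverse inclusion can fail. Second, in step~(D) the phrase ``matching the coefficient of each $\rho$'' tacitly assumes the rotations are linearly independent in $\Rset^E$, which is nowhere established; the conclusion can be rescued by first combining repeated rotations in the concatenated sequence (as in the proof of Lemma~\ref{lm:2E}(ii)) and then invoking Corollary~\ref{cor:gomogen}, but this should be spelled out.
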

   \begin{proof}
Consider stable assignments $x,y\in\Sscr$. Let $\lambda:=\omega(x)$ and $\mu:=\omega(y)$. The key part of the proof is to show that 
 \begin{equation} \label{eq:homomor}
 \lambda\wedge \mu=\omega(x\curlywedge y)\qquad \mbox{and} \qquad \lambda\vee \mu=\omega(x\curlyvee y)
  \end{equation}
(where $\curlywedge$ and $\curlyvee$ concern the relation $\prec_W$ rather than $\prec_F$); in other words, $\omega$ determines a homomorphism of the lattices. 

To show the left equality, consider a middle (non-expensive) semi-route $X=(z=x_0,x_1,\ldots, x_N)$ from $z:=x\curlywedge y$ to $x$, and a middle semi-route $Y=(z=y_0,y_1,\ldots,y_M)$ from $z$ to $y$. Let $\Rscr(X)=(\rho_1,\ldots,\rho_N)$ and $\Rscr(Y)=(\rho'_1,\ldots,\rho'_M)$ be the corresponding sequences of rotations, and let $\lambda_i$ ($\lambda'_j$) be the corresponding weight of $\rho_i$ (resp. $\rho'_j$) used in $X$ (resp. $Y$). (By Proposition~\ref{pr:gomogen}, the set of rotations $\rho_i$ in $\Rscr(X)$, along with their weights $\lambda_i$, depend only on the end assignments $z$ and $x$. And similarly for $\Rscr(Y)$.) 

One can realize that the left equality in~\refeq{homomor} follows from the property that
  \begin{numitem1} \label{eq:no_common}
the sequences $\Rscr(X)$ and $\Rscr(Y)$ have no common rotations.
  \end{numitem1}
  
We prove~\refeq{no_common} as follows. Suppose, for a contradiction, that $\Rscr(X)$ and $\Rscr(Y)$ share a rotation $\rho$, say, $\rho=\rho_i=\rho'_j$.  

If $i=j=1$ (i.e., $\rho$ is the first rotation in both $\Rscr(X)$ and $\Rscr(Y)$), then $z':=z+\min\{\lambda_1,\lambda'_1\}\,\rho$   satisfies $z'\succ z$, $z'\preceq x$ and $z'\preceq y$, contrary to the fact that $z$ is the maximal lower bound for $x,y$. So we may assume that $i>1$ and $j\ge 1$.

Consider a rotation $\rho_k$ with $k<i$ in $\Rscr(X)$. If $\rho_k\lessdot \rho$, then $\rho_k$ must occur in the sequence $\Rscr(Y)$ earlier than $\rho'_j=\rho$, and therefore, $\rho_k=\rho'_{k'}$ for some $k'<j$. This enables us to exclude the situation when
  \begin{itemize}
\item[($\ast$):] $\rho_k\lessdot \rho_i$ for $k=1,\ldots,i-1$, and simultaneously, $\rho'_{k'}\lessdot \rho'_j$ for $k'=1,\ldots,j-1$.
  \end{itemize}
Indeed, in this case, we would have $\{\rho_1,\ldots,\rho_i\}=\{\rho'_1,\ldots,\rho'_j\}$. Also for each $k=1,\ldots,i-1$, the relation $\rho_k\lessdot \rho$ implies that $\lambda_k$ is the maximal admissible weight $\tau(\rho_k)$. It follows that if $\rho_k=\rho'_{k'}$, then $\lambda_k=\lambda'_{k'}$. Then the stable assignment $z':=z+\lambda_1\rho_1+\cdots+\lambda_{i-1}\rho_{i-1}$ belongs to both semi-routes $X$ and $Y$. This implies $z\prec z'\preceq x$ and $z'\preceq y$, contradicting the definition of $z$.

It remains to consider the situation when there is $\rho_k$ with $1\le k<i$ which is not comparable with $\rho_i$; for brevity, we will write $\rho_i\approx \rho_k$. We will use the following
  \medskip

\noindent\textbf{Claim.} \emph{Let $\tilde x\in\Sscr$ and let $\rho$ and $\rho'$ be rotations such that $\rho$ is applicable to $\tilde x$, and $\rho'$ is applicable to $\tilde x':=\tilde x+\lambda\rho$ for an admissible weight $\lambda>0$ for $\rho$. Then exactly one of the following is valid: {\rm(a)} $\rho'$ is applicable to $\tilde x$, or {\rm(b)} $\rho\lessdot\rho'$.}
  \medskip
  
This Claim will be proved later. Assuming validity of it, we argue as follows. Let $k$ be maximum subject to $k<i$ and $\rho_k\approx\rho_i$. First suppose that $k=i-1$. Then, by (a) in the Claim applied to $\tilde x=x_{i-2}$, the rotations $\rho'=\rho_{i-1}$ and $\rho=\rho_i$ commute, and we can update $X$ by swapping $\rho'$ and $\rho$. This gives the semi-route $X'=(x_0,\ldots,x_{i-2},x',x_i,\ldots,x_N)$, where $x':=x_{i-2}+\lambda_i\rho_i$, and in the corresponding sequence $\Rscr(X')$ the rotation $\rho$ is disposed earlier than in $\Rscr(X)$. We can continue to ``lower'' $\rho$ further if $\rho_{i-2}\approx \rho$, and so on.

Now suppose that $k< i-1$. Then we are going to ``lift'' $\rho'=\rho_k$ so as to get closer to $\rho$. This is done as follows. Since $\rho'\approx \rho$ and $\rho_{k+1}\lessdot\rho$ (by the maximal choice of $k$), we have $\rho'\approx\rho_{k+1}$ (for $\rho'\lessdot\rho_{k+1}$ would imply $\rho'\lessdot \rho$). Then, by~(a) in the Claim, $\rho_{k+1}$ is applicable to $x_{k-1}$. Swapping $\rho'$ and $\rho_{k+1}$ (preserving their weights), we obtain the semi-route $X''$ such that in the corresponding sequence $\Rscr(X'')$ the rotation $\rho'$ becomes closer to $\rho$, as required.

Summing up the above observations, we can conclude that the semi-route $X$ can be transformed so that the rotation $\rho$ reaches the first position in the resulting  sequence of rotations related to $z$ and $x$. By transforming $Y$ in a similar way, we obtain a sequence of rotations related to $z$ and $y$ in which the same rotation $\rho$ is situated at the first position as well. Then we act as in the beginning of the proof, yielding a contradiction with the definition of $z$. 
This proves~\refeq{no_common}, implying the left equality in~\refeq{homomor}. The right equality in~\refeq{homomor} is proved symmetrically (by taking the order $\prec_F$ instead of $\prec_W$, and thinking of a route as a sequence going from $\xmax$ to $\xmin$). Thus, $\omega$ gives a homomorphism of the above lattices, as required.

It follows that the map $\omega$ homomorphically embeds $(S,\prec_W)$ into a sublattice of $(\frakC,<)$. Also $\omega$ is, obviously, injective.

To show that $\omega$ is a bijection, we use the fact that for any $\rho,\rho'\in\Rscr$, unless $\rho'\lessdot \rho$, there is a route where $\rho$ is applied earlier than $\rho'$, and therefore, there is $x\in\Sscr$ separating $\rho$ from $\rho'$, in the sense that $\omega(x)$ assigns a positive weight to $\rho$, but zero weight to $\rho'$. This implies (by using~\refeq{homomor}(left)) that for any $\rho\in\Rscr$ and $0<\lambda\le\tau(\rho)$, there is $x\in\Sscr$ such that $\omega(x)$ assigns weight $\lambda$ to $\rho$, the full weight $\tau(\rho'')$ to each $\rho''\lessdot \rho$, and zero weight otherwise (in other words, $\omega(x)$ is the minimal closed function taking value $\lambda$ on $\rho$). So $\omega(\Sscr)$ covers all join-irreducible elements of $(\frakC,<)$, and the desired bijection follows.
   \end{proof}
   
\noindent\textbf{Remark 4.} ~By Birkhoff's representation theorem~\cite{birk}, a finite distributive lattice $\Lscr$ is isomorphic to the lattice (``ring of sets''), with standard $\cap$ and $\cup$ operations, formed by ideals (down sets) in the poset $\Pscr$ generated by  join-irreducible elements of $\Lscr$. This can be extended, with a due care, to our infinite lattice $\Lscr=(\Sscr,\prec_W)$ as well. Formally, for an integer $k\ge 2$ and any $\rho\in\Rscr$, choose a set $\Lambda^{(k)}(\rho)$ of $k$ weights $0=\lambda_1<\lambda_2<\cdots<\lambda_k=\tau(\rho)$. Accordingly, transform the weighted poset $(\Rscr,\tau,\lessdot)$ into a finite poset $\Pscr^{(k)}$ by replacing each element $\rho$ by $k$ elements $\rho_1,\ldots,\rho_k$ corresponding to the weights in $\Lambda^{(k)}(\rho)$ and ordered as $\rho_1\lessdot\cdots\lessdot\rho_k$, respectively (as to relations $\rho\lessdot\rho'$ in $(\Rscr,\tau,\lessdot)$, they are replaced by $\rho_k\lessdot\rho'_1$). Applying the rotational shifts with chosen weights, we build the corresponding finite subset $\Sscr^{(k)}$  of stable assignments in $\Sscr$. This $\Sscr^{(k)}$ induces a sublattice of $(\Sscr,\prec_W)$ isomorphic to the lattice of ideals in $\Pscr^{(k)}$, in spirit of Birkhoff's representation. (Note that $\Sscr^{(2)}$ consists exactly of the stable assignments generated by full rotational shifts, and $\Pscr^{(2)}$ turns into the unweighted poset $(\Rscr,\lessdot)$.) When $k$ grows, the constructed objects ``tend'' to the original objects $\Sscr,\lessdot_W,\Rscr,\tau,\lessdot$, and one can conclude that the join-irreducible elements $x$ of $(\Sscr,\prec_W)$ are exactly the preimages by $\omega$ of the minimal (viz. \emph{principal}) closed functions determined by the pairs $(\rho,\lambda)$ with $\rho\in\Rscr$ and $0<\lambda\le\tau(\rho)$.   
   
 \medskip  
\noindent\textbf{Proof of the Claim.}
Let $L$ and $L'$ be the maximal strong components in the active graphs $\orar\Gamma=\orar\Gamma(\tilde x)$ and $\orar\Gamma'=\orar\Gamma(\tilde x')$ that determine $\rho$ and $\rho'$, respectively. If $\rho'$ is applicable to $\tilde x$, or, equivalently, $L'\in\Lrot(\tilde x)$, then $\rho\approx \rho'$. So assume that $L'\notin\Lrot(\tilde x)$. Then $\lambda$ is the maximal rotational weight $\tau(\rho)$, whence $\orar\Gamma\ne\orar\Gamma'$. 

For $v\in V^=$, let $A_v$ and $A'_v$ denote the sets of active edges leaving $v$ in $\orar\Gamma$ and $\orar\Gamma'$, respectively. Then (cf.~\refeq{rotat_shift}(b)):
  \begin{numitem1} \label{eq:AAp}
if $v=f\in F^=$, then either $\pic_f(\tilde x')=\pic_f(\tilde x)$ and $A'_f\subseteq A_f$, or $\pic_f(\tilde x')$ is worse than $\pic_f(\tilde x)$; whereas if $v=w\in W^=$, then either $\pic_w(\tilde x')=\pic_w(\tilde x)$ and $A'_w\supseteq A_w$, or $\pic_w(\tilde x')$ is better than $\pic_w(\tilde x)$; moreover, if $\pic_v(\tilde x')=\pic_v(\tilde x)$ for all $v\in V^=$, then at least one of the above inclusions is strict.
  \end{numitem1}
Now consider two possible cases.
 \medskip
 
\noindent\emph{Case 1}: $L$ and $L'$ have a common vertex. For each $v\in V_L\cap V_{L'}$, we have: $A_v=E_L\cap E_v$ and $A'_v=E_{L'}\cap E_v$, the sets $A_v,\,A'_v,\,\pic_v(\tilde x),\,\pic_v(\tilde x')$ satisfy relations as in~\refeq{AAp}, and at least one of these relations does not turn into equality (since, obviously, $L\ne L'$). When moving along any route, relations concerning the active graphs for neighboring assignments are updated similar to those exposed in~\refeq{AAp}. These updates have a ``monotone character'' at each vertex, and we can conclude that the rotation $\rho'$ cannot be applied in any route earlier than $\rho$. It follows that $\rho\lessdot \rho'$.
 \medskip
 
\noindent\emph{Case 2}: $L$ and $L'$ are disjoint. This is possible only if $L'$ is a strong component in each of $\orar\Gamma$ and $\orar\Gamma'$, but $L'$ is maximal in $\orar\Gamma'$ and not maximal in $\orar\Gamma$. Then the set $Q$ of active edges in $\orar\Gamma$ going from $L'$ to other strong components in $\orar\Gamma$ must disappear under the transformation of $\tilde x$ into $\tilde x'$. This implies that all edges in $Q$ enter just $L$. Since the assignment on each $e\in Q$ is not changed: $\tilde x'(e)=\tilde x(e)$, the situation that the edge $e$ becomes non-active can happen only if $e$ goes from $F$ to $W$, say, $e=(f,w)$, and $w$ does not belong to the tail $T_w(\tilde x')$. This implies that the assignments on the edges in $\pic_w(\tilde x)$ must reduce to zero. Then the critical tie in $\Pi_w$ becomes better: $\pic_w(\tilde x')>\pic_w(\tilde x)$. Moreover, $e$ must belong to a tie worse than $\pic_w(\tilde x')$. It follows that in any route the rotation $\rho'$ cannot be applied earlier than $\rho$ (for otherwise the edges from $Q$ should be active at the moment of applying $\rho'$, and therefore, $L'$ could not be a maximal strong component in the current active graph). As a result, we obtain $\rho\lessdot \rho'$.
\hfill$\qed$
\medskip   
   
In the rest of this section, we explain how to construct the rotation poset $(\Rscr,\tau,\lessdot)$ efficiently. Our approach is in spirit of the method elaborated for the stable marriage problem in~\cite{IL}.

First of all we compute, step by step, the set $\Rscr$ of rotations, see explanations around Proposition~\ref{pr:time}. This is performed in the process of constructing an arbitrary route $T$ from $\xmin$ to $\xmax$. Recall that at each step, for a current stable assignment $x$, we construct the active graph $\orar\Gamma(x)$, extract the set $\Lrot(x)$ of maximal strong components $L$ in it, and determine the rotations $\rho=\rho_L$ and their maximal admissible weights $\tau(\rho)=\tau_L$ (this is performed in time polynomial in $|E|$, cf. Proposition~\ref{pr:time}). Then we update the current $x$ by adding to it the total shift $\sum(\tau_L\rho_L\colon L\in\Lrot(x))$, and so on. As a result, we obtain the set $\Rscr$ of all  rotations $\rho$ (where $|\Rscr|=O(|E|)$) along with their maximal weights $\tau(\rho)$, in time polynomial in $|E|$.

A less trivial task is to find the order relations $\lessdot$ for $\Rscr$. This is reduced to finding the set of pairs $(\rho,\rho')$ giving \emph{immediately preceding} relations, i.e., those satisfying $\rho\lessdot \rho'$ and having no $\rho''\in\Rscr$ in between: $\rho\lessdot\rho''\lessdot\rho'$. We try to extract such pairs by appealing to the Claim in the proof of Proposition~\ref{pr:biject}.

More precisely, define $\Escr$ to be the set of pairs $(\rho,\rho')$ in $\Rscr$ such that
  \begin{numitem1} \label{eq:rho-rhop}
there exists $x\in\Sscr$ such that $\rho$ but not $\rho'$ is applicable to $x$, whereas $\rho'$ is applicable to $x':=x+\tau(\rho)\rho$.
  \end{numitem1}
  
Each pair $(\rho,\rho')\in\Escr$ gives an immediately preceding relation. (Indeed, $x$ and $x'$ as above can be represented as two consecutive elements in some route; also the Claim implies that $\rho\lessdot \rho'$; therefore, the existence of $\rho''$ between $\rho$ and $\rho'$ is impossible.)

Note that the set of pairs $\Escr$ can be found efficiently (in fact, in time $O(|E|^2)$). 

We show that $\Escr$ does determine all immediately preceding relations in $(\Rscr,\lessdot)$ (in other words, $(\Rscr,\Escr)$ is the Hasse diagram for this poset). Clearly if $\rho'\in\Rscr$ is reachable from $\rho\in\Rscr$ by a directed path in $(\Rscr,\Escr)$, then $\rho\lessdot \rho'$. The converse is valid as well.
 \begin{prop} \label{pr:dir_path}
If $\rho,\rho'\in\Rscr$ and $\rho\lessdot\rho'$, then the graph $\Hscr=(\Rscr,\Escr)$ has a directed path from $\rho$ to $\rho'$.
 \end{prop}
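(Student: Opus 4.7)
The plan is to induct on $n(\rho,\rho'):=|\{\sigma\in\Rscr:\rho\lessdot\sigma\lessdot\rho'\}|$, the number of rotations strictly between $\rho$ and $\rho'$ in the poset. A preliminary observation, used throughout, is that $\rho\lessdot\rho'$ forbids $\rho$ and $\rho'$ from being simultaneously applicable at any stable assignment $y$: otherwise, starting from $y$, applying $\rho'$ first and extending to $\xmax$ (using Lemma~\ref{lm:2E}(ii) and Proposition~\ref{pr:equal_rot}) would produce a route in which $\rho'$ precedes $\rho$, contrary to $\rho\lessdot\rho'$. Consequently, in any route in which $\rho$ is applied at step $i$ and $\rho'$ at step $j>i$, the smallest index $k$ with $\rho'$ applicable at $x_k$ satisfies $i\le k\le j-1$; the lower bound comes from the same swap-to-contradiction, since $\rho'$ being applicable at an earlier $x_m$ would let us apply it instead of $\rho_{m+1}$, again placing $\rho'$ before $\rho$.

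The second preparatory step is a reordering lemma: any linear extension of $(\Rscr,\lessdot)$ is realizable as a route. This is a bubble-sort argument based on the Claim in the proof of Proposition~\ref{pr:biject}: if consecutive rotations $\sigma_s,\sigma_{s+1}$ in a route satisfy $\sigma_s\not\lessdot\sigma_{s+1}$, then by that Claim $\sigma_{s+1}$ is already applicable at $x_{s-1}$, their generating maximal strong components are disjoint by Corollary~\ref{cor:vertP}, and the two shifts can be swapped. Iterating this, given $\rho\lessdot\rho'$ I can produce a route $T$ in which the steps strictly between $i$ and $j$ are occupied by exactly the elements of the open interval $\{\sigma:\rho\lessdot\sigma\lessdot\rho'\}$, with all other rotations pushed outside that block.

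For the base case $n(\rho,\rho')=0$ the chosen route has $j=i+1$; setting $x:=x_{i-1}$ gives $\rho$ applicable at $x$, $\rho'$ applicable at $x+\tau(\rho)\rho=x_i$, and $\rho'$ inapplicable at $x$ (by the non-simultaneity observation). This is exactly the hypothesis of~\refeq{rho-rhop}, so $(\rho,\rho')\in\Escr$ and we have a direct edge. For the inductive step $n\ge 1$, consider $k$ as above in the chosen route. If $k=i$, the same argument again gives $(\rho,\rho')\in\Escr$; otherwise $i<k\le j-1$, whence $\rho_k$ is one of the intermediate rotations and therefore satisfies $\rho\lessdot\rho_k\lessdot\rho'$. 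Applying the Claim at the transition step $k$ (where $\rho'$ passes from inapplicable at $x_{k-1}$ to applicable at $x_k$) yields $(\rho_k,\rho')\in\Escr$. Since $n(\rho,\rho_k)<n(\rho,\rho')$ (the open interval below $\rho_k$ is a proper subset of that below $\rho'$, missing $\rho_k$ itself), the induction hypothesis provides a directed path $\rho\to\cdots\to\rho_k$, and appending the edge $(\rho_k,\rho')$ completes the required path.

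The main technical obstacle is the reordering step: one must verify that the iterated swaps constitute a globally valid route, and that the target linear extension can indeed be organized so that exactly the open interval occupies the block between $\rho$ and $\rho'$. Once that is in hand, the two roles of the Claim—supplying $\Escr$-edges for both the base case and each inductive reduction—assemble the entire $\rho$-to-$\rho'$ path.
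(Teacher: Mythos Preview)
Your proof is correct and takes a genuinely different route from the paper's. The paper argues by contradiction on ideals: it lets $\Iscr$ be the ideals of $(\Rscr,\lessdot)$ and $\Jscr$ the ideals of the reachability poset of $\Hscr$, assumes $\Iscr\subsetneq\Jscr$, picks a maximal $J\in\Jscr\setminus\Iscr$ and a maximal $R\in\Iscr$ with $R\subset J$, and then extracts an edge of $\Escr$ leaving $J$, contradicting that $J$ is an $\Hscr$-ideal. Your argument is instead a direct induction on $|(\rho,\rho')|$, preceded by a reordering lemma (every linear extension of $(\Rscr,\lessdot)$ is a route, via bubble-sort swaps justified by the Claim). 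Both proofs ultimately hinge on the same Claim from Proposition~\ref{pr:biject}; the paper uses it once to manufacture the contradicting edge, while you invoke it repeatedly for the swaps and again to produce the edge $(\rho_k,\rho')$ at each inductive reduction. Your approach is more constructive (it actually builds the path) and yields the useful side fact that linear extensions correspond to routes; the paper's approach is shorter and avoids the combinatorial bookkeeping of arranging the open interval as a contiguous block. Two minor points worth tightening in a final write-up: your non-simultaneity observation is only ever applied to assignments lying on a route, so you need not worry about extending an arbitrary $y$ back to $\xmin$; and the existence of the desired linear extension (with exactly the open interval between $\rho$ and $\rho'$) follows cleanly from the fact that $\{\sigma:\sigma\lessdot\rho',\ \rho\not\lessdot\sigma\}$ is a down-set in which $\rho$ is maximal.
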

 \begin{proof}
~Let $\Iscr$ be the set of ideals in the poset $(\Rscr,\lessdot)$, and $\Jscr$ the set of ideals in the poset determined by $\Hscr$ (via the reachability by paths). Then $\Iscr\subseteq \Jscr$, and we have to show that $\Iscr=\Jscr$, whence the result will follow.

Suppose this is not so. Choose $J\in \Jscr$ with $|J|$ maximum subject to $J\notin \Iscr$. Also choose $R\in\Iscr$ with $|R|$ maximum subject to $R\subset J$. By Proposition~\ref{pr:biject}, $R$ corresponds to some $x\in\Sscr$, and therefore, $R$ is the set of rotations used to form a lower semi-route $T$ from $\xmin$ to $x$ (i.e., $R=\Rscr(T)$).

Consider a rotation $\rho$ applicable to $x$. Then $R':=R\cup\{\rho\}$ is an ideal in $\Iscr$ (corresponding to the assignment $x':=x+\tau(\rho)\rho$). 
By the maximality of $R$, we have $R'\not\subseteq J$; so $\rho$ is not in $J$. Then the set $J':=J\cup R'=J\cup\{\rho\}$ is an ideal in $\Jscr$ strictly including $J$, and the maximality of $J$ implies $J'\in\Iscr$.

Now since $J'\supset R'$ and both $J',R'$ are ideals in $\Iscr$, the set $J'-R'$ contains a rotation $\rho'$ such that the set $R'':=R'\cup\{\rho'\}$ is an ideal in $\Iscr$. Form $J'':= R\cup\{\rho'\}$; obviously, $R\subset J''\subseteq J$. This $J''$ is not an ideal in $\Iscr$ (since $J''\in\Iscr$ would lead to a contradiction with the maximality of $R$ (if $J''\ne J$) or with the condition on $J$ (if $J''=J$)).

So we come to the situation when $R,R',R''\in\Iscr$; $R'=R\cup\{\rho\}$; $R''=R'\cup\{\rho'\}$; but $R\cup\{\rho'\}\notin\Iscr$. Then the rotation $\rho'$ is not applicable to the assignment $x$, but it is applicable to $x'$ (obtained from $x$ by applying $\rho$). This means that $(\rho,\rho')$ is an edge in $\Escr$ (cf.~\refeq{rho-rhop}). But this edge goes from $\Rscr-J$ to $J$ (since $\rho\notin J$ while $\rho'\in J$), contrary to the fact that $J$ is an ideal for $\Hscr$. 
 \end{proof}
 
\section{Affine representability and stable assignments of minimum cost} \label{sec:weight}

The bijection $\omega$ figured in Proposition~\ref{pr:biject} enables us to give an affine representation of the lattice $\Lscr=(\Sscr,\prec_W)$, like what is noticed in~\cite{MS} for SAP, or in~\cite{FZ} for the Boolean case of Alkan--Gale's model.

In what follows, depending on the context, a rotation $\rho\in\Rscr$ can be regarded either as the corresponding function on $E$ (viz. a vector in $\Rset^E$), or as a unit base vector in the space $\Rset^\Rscr$ (of which coordinates are indexed by elements of $\Rscr$); in the latter case, we may use notation $\langle\rho\rangle$ for $\rho$. Let $A\in \Rset^{E\times\Rscr}$ be the matrix whose columns are formed by rotations $\rho\in\Rscr$ (regarded as vectors in $\Rset^E$). Then, by Proposition~\ref{pr:biject},
  \begin{numitem1} \label{eq:affine}
the affine map $\gamma:\lambda\in\Rset^\Rscr\mapsto x\in\Rset^E$ given by
$x:= \xmin+A\lambda$
determines the bijection between the lattice $(\frakC,<)$ of closed functions $\lambda$ for the rotation poset $(\Rscr,\tau,\lessdot)$ and the lattice $(\Sscr,\prec_W)$ of stable assignments in SMP; in other words, $x=\omega^{-1}(\lambda)$ (where $x\in\Sscr$, $\lambda\in\frakC$, and $\omega$ is as in Proposition~\ref{pr:biject}).
  \end{numitem1}
  
As is explained in Sects.~\SEC{addit_prop} and~\SEC{poset_rot}, $|\Rscr|$ is estimated as $O(|E|)$ and the matrix $A$ can be constructed in time polynomial in $|E|$. Due to this, we can take advantages from the above affine representation in order to efficiently translate certain problems on stable assignments into problems on closed functions. 

First of all we observe that the map $\gamma$ in~\refeq{affine} sends the convex hull $\convex(\frakC)\subset\Rset^\Rscr$ of $\frakC$ onto the convex hull $\convex(\Sscr)\subset\Rset^E$ of $\Sscr$. The polytope $\convex(\frakC)$ is full-dimensional in $\Rset^\Rscr$ (since each unit base vector $\langle\rho\rangle$ can be expressed as a linear combination of closed functions) and it can be described by the linear constraints
  \begin{eqnarray}
  0\le\lambda(\rho)\le\tau(\rho),\qquad\quad && \rho\in\Rscr;  \label{eq:conv72} \\
  (1/\tau(\rho))\lambda(\rho)\le (1/\tau(\rho'))\lambda(\rho'), && \rho,\rho'\in\Rscr,\;
                \rho'\lessdot\rho. \label{eq:conv73}
   \end{eqnarray}                
(Note that~\refeq{conv72}--\refeq{conv73} turn into the constraints for Stanley's order polytope~\cite{stan} after scaling  each variable $\lambda(\rho)$ to $\lambda(\rho)/\tau(\rho)$.) 

When $A$ has full column rank (and therefore, $\dimen(\convex(\Sscr))=  \dimen(\convex(\frakC))=|\Rscr|$), $\convex(\Sscr)$ becomes affinely congruent to $\convex(\frakC)$, and the facets of $\convex(\Sscr)$ can be explicitly described via the facets of $\convex(\frakC)$; the latter ones are as in~\refeq{conv72} for minimal  $\rho$ in $(\Rscr,\lessdot)$, and as in~\refeq{conv73} when $\rho'$ immediately precedes $\rho$. This is valid, e.g., for the Boolean case of~\cite{AG}, as is shown in~\cite{FZ}, and for the unconstrained case of SAP (when $b\equiv\infty)$), as is shown in~\cite{MS}. As to our model SMP, the full column rank is not guaranteed even in the unconstrained case $b\equiv\infty$ (to construct a counterexample is not difficult). When $k:=|\Rscr|-\dimen(\convex(\Sscr))>0$, the facets of $\convex(\Sscr)$ are images by $\gamma$ of some faces (or their unions) of co-dimension $k+1$ in $\convex(\frakC)$, and to characterize these facets looks a less trivial task (for a further discussion, see Remark~5 below). 

Next, note that
\begin{numitem1} \label{eq:vertC}
the vertices of $\convex(\frakC)$ are exactly the fully closed functions.
  \end{numitem1}

Indeed, if $\lambda\in\frakC$ is such that $0<\lambda(\rho)<\tau(\rho)$ holds for some $\rho\in\Rscr$, then $\lambda$ is a convex combination of closed functions $\lambda'$ and $\lambda''$ obtained from $\lambda$ by replacing its value on $\rho$ by zero and by $\tau(\rho)$, respectively; so $\lambda$ is not a vertex. Conversely, any fully closed function $\lambda$ is a vertex of $\convex(\frakC)$ since it is the unique element of $\frakC$ attaining the maximum of the inner product $a\lambda'$ over $\lambda'\in\frakC$, where for $\rho\in\Rscr$,  $a(\rho)$ is defined to be 1 if $\lambda(\rho)=\tau(\rho)$, and $-1$ if $\lambda(\rho)=0$. 

We obtain from~\refeq{affine} and~\refeq{vertC} that
  \begin{numitem1} \label{eq:vertS}
if $x\in\Sscr$ is a vertex of $\convex(\Sscr)$, then $\omega(x)$ is fully closed (but the converse is not guaranteed when $\gamma$ is singular).
  \end{numitem1}

In the rest of this section we consider the \emph{min-cost stable assignment problem}:
  \begin{numitem1} \label{eq:min_cost}
given \emph{costs} $c(e)\in\Rset$ of edges $e\in E$, find a stable assignment $x\in\Sscr$ minimizing the total cost $cx:=\sum(c(e)x(e)\colon e\in E)$.
 \end{numitem1} 
(Since all stable assignments $x$ have the same size $|x|$, the function $c$ can be considered up to adding a constant; in particular, one may assume that $c$ is positive. Replacing $c$ by $-c$ leads to the problem of maximizing $cx$.)

Starting with the pioneering work~\cite{ILG}, a well-known method of minimizing a linear function over the set of stable objects (in an appropriate stability model) consists in reduction to a minimization problem over the set of ideals or closed functions in the associated poset of rotations (assuming that such a poset exists and can be efficiently constructed). The latter problem is further reduced, by use of Picard's method in~\cite{pic}, to the classical minimum cut problem. To make our description more self-contained, we now explain how this approach works in our case. 

We first compute the cost of each rotation $\rho\in\Rscr$, to be $c\rho:=\sum(c(e)\rho(e)\colon e\in E)$. For each $x\in\Sscr$ and the corresponding closed function $\lambda=\omega(x)$, we have
  $$
  cx=c\xmin+\sum(\lambda(\rho)c\rho\colon \rho\in\Rscr).
  $$

Since problem~\refeq{min_cost} has an optimal solution $x\in\Sscr$ for which $\omega(x)$ is fully closed, by~\refeq{vertS}, we come to the following minimization problem over the set of ideals in the finite poset $(\Rscr,\lessdot)$, with the weight function $\zeta$ defined by $\zeta(\rho):=\tau(\rho)c\rho$ for $\rho\in\Rscr$:
  \begin{numitem1} \label{eq:zeta_cost}
find an ideal $I\subset \Rscr$ minimizing the total weight $\zeta(I):=\sum(\zeta(\rho)\colon \rho\in I)$. 
\end{numitem1}

Admitting a slightly more general setting (\emph{problem}~$(\ast)$), we can consider a finite directed graph $H=(V_H,E_H)$ and a function $\zeta:V_H\to\Rset$ and ask for finding a closed set of vertices $X\subseteq V_H$ minimizing $\zeta(X):=\sum(\zeta(v)\colon v\in X)$ (where $X$ is called closed if there is no edge going from $V_H-X$ to $X$).

Following~\cite{pic}, problem~$(\ast)$ is reduced to an instance of minimum cut problem on the directed graph $\hat H=(\hat V,\hat E)$ and edge capacity function $h$ obtained from $H,\zeta$ by: 

(a) adding two vertices, ``source'' $s$ and ``sink'' $t$; 

(b) adding the set $E^+$ of edges $(s,v)$ for all vertices $v$ in $V^+:=\{v\in V_H\colon \zeta(v)>0\}$;

(c) adding the set $E^-$ of edges $(u,t)$ for all vertices $u$ in $V^-:=\{u\in V_H\colon \zeta(u)<0\}$;

(d) assigning $h(s,v):=\zeta(v)$ for all $v\in V^+$, \; $h(u,t):=|\zeta(u)|$ for all $u\in V^-$, and $h(e):=\infty$ for all $e\in E_H$.

An $s$--$t$ \emph{cut} in $H$ is meant to be the set $\delta(A)$ of edges going from a set $A\subset \hat V$ such that $s\in A\not\ni t$ to its complement $\hat V-A$, and its capacity is $h(\delta(A)):=\sum(h(e)\colon e\in\delta(A))$. One shows that $\delta(A)$ is a minimum capacity cut among all $s$--$t$ cuts if and only if $X:=V_H-A$ is a closed set in $H$ with $\zeta(X)$ minimum.

Indeed,  for an $s$--$t$ cut $\delta(A)$, the value $h(\delta(A))$ is finite if and only if $\delta(A)$ contains no edges from $H$. The latter implies that $\delta(A)\subseteq E^+\cup E^-$, and $X$ is closed. Then
   \begin{multline*}
h(\delta(A))=h(\delta(A)\cap E^+)+h(\delta(A)\cap E^-) \\
= \zeta(X\cap V^+)+\sum(|\zeta(u)|\colon u\in (V_H-X)\cap V^-)\\
  =\zeta(X\cap V^+)+ \zeta(X\cap V^-)-\zeta(V^-)=\zeta(X)-\zeta(V^-).
  \end{multline*}

So $\zeta(X)$ differs from $h(\delta(A))$ by the constant $\zeta(V^-)$, and the desired property follows. Summing up the above reasonings and constructions, we obtain
 \begin{theorem} \label{tm:min_cost}
The minimum cost stable assignment problem~\refeq{min_cost} is solvable in strongly polynomial time.
  \end{theorem}
  
\noindent\textbf{Remark 5.} As is mentioned in the Introduction, the (strongly) polynomial-time solvability of~\refeq{min_cost} implies that the separation problem for $\Sscr$ is solvable in (strongly) polynomial time as well, due to the polynomial-time equivalence between optimization and separation problems (in a wide class of such problems) established in~\cite[Theorems~6.4.9,\,6.6.5]{GLS}. (Recall that the separation problem for $\Sscr$ is: given a vector $\xi\in\Rset^E$, decide whether $\xi$ is contained in $\convex(\Sscr)$, and if not, find a hyperplane $H$ in $\Rset^E$ separating $\xi$ and $\Sscr$, i.e. such that one component (half-space) in $\Rset^E-H$ contains $\xi$ but no element of $\Sscr$.) Moreover, when $\xi\notin\convex(\Sscr)$, one can find, in (strongly) polynomial time, a separating hyperplane including a facet of $\convex(\Sscr)$ (cf.~\cite[Th.~6.5.16]{GLS}).

In light of this, one may ask: (a) can $\convex(\Sscr)$ be explicitly described via a system of linear constraints, or~(b) can one characterize the facet structure of $\convex(\Sscr)$ in ``good combinatorial terms''? Both questions are affirmatively answered in the simplest case, the stable marriage problem, in~\cite{VV}. An affirmative answer to~(a) is given in~\cite{MS} for the unconstrained SAP, and in~\cite{FZ} for the Boolean version of~\cite{AG} in the case when at least one side of the market (viz. vertex part of $G$) is subject to quota restrictions and the choice functions are given via oracle calls. The latter generalizes known many-to-many stability problems. As to our SMP model, both questions are open at present.

\appendix

\section{Appendix. An efficient algorithm for finding $\xmin$}

Recall that the method described in Sect.~\SEC{begin} can have infinitely many iterations, though it converges just to the stable assignment $\xmin$ which is optimal for the part $F$. We refer to that method, as well as iterations in it, as \emph{ordinary}. In this section we describe a modified method, which is finite and finds $\xmin$ in time polynomial in $|E|$.

We start with analyzing some details of the ordinary method. Recall that a current, $i$-th, iteration in it consists of two stages: at the 1st one, the current boundary function $b^i$ is transformed into $x^i$, by setting $x^i_f:=C_f(b^i_f)$ for each $f\in F$, and at the 2nd one, $x^i$ is transformed into $y^i$, by setting $y^i_w:=C_w(x^i_w)$ for each $w\in W$. Then the boundary function is updated according to rule~\refeq{bi}. Let $F^{i=}$ denote the set of fully filled vertices $f$ in $F$ for $x^i$ (i.e., satisfying $|x^i_f|=q(f)$), and $W^{i=}$ the set of fully filled vertices in $W$ for $y^i$. For $f\in F$, let $L^i_f$ denote the set of edges $e\in E_f$ such that $b^i(e)<b(e)$. We observe the following.
  \smallskip
  
(i) For each $f\in F$, the sets $L_f$ are monotone non-decreasing: $L^i_f\subseteq L^{i+1}_f$, and the current value on each edge $e\in L_f$ may decrease (at the 2nd stages of some iterations) but never increases (since $b^1(e)\ge \cdots \ge b^i(e)\ge b^{i+1}(e)\ge\cdots$).
 \smallskip
 
(ii) The sets $W^{\bullet=}$ are monotone non-decreasing: $W^{i=}\subseteq W^{(i+1)=}$ (for if $|x^i_w|\ge q(w)$, then the choice function $C_w$ cuts $x^i_w$ so as to get $|y^i_w|=q(w)$, and subsequently the relations $|x_w|\ge|y_w|=q(w)$ are always maintained). In turn, the sets $F^{\bullet=}$ are monotone non-increasing: $F^{i=}\supseteq F^{(i+1)=}$ (since $b^i\ge b^{i+1}$).
 \smallskip
 
(iii) The heads $H_w(y)$ of vertices $w\in W^{\bullet=}$ are monotone non-decreasing: $H_w(y^i)\subseteq H_w(y^{i+1})$ (when $w\in W^{i=}$), unless the critical tie in $\Pi_w$ changes (becomes better). In turn, the heads $H_j(x)$ for $f\in F^{\bullet=}$ are monotone non-increasing: $H_f(x^i)\supseteq H_f(x^{i+1})$.
 \smallskip
 
(iv) For each $f\in F$, all edges in $E_f-(H^i_f\cup L^i_f)$ are saturated, where $H^i_f$ denotes the head $H_f(x^i)$ (which is nonempty if $f\in F^{i=}$, and empty if $f\in F-F^{i=}$).
\smallskip
 
These observations prompt monotone parameters, and we say that $i$-th iteration is \emph{positive} if at least one of the following events happens in it: for some $f\in F$, the set of edges $e\in E_f$ that are either saturated or contained in $L_f$ increases; the set $W^{\bullet=}$ of fully filled vertices in $W$ increases; for some $w\in W^{\bullet=}$, either the head $H_w$ increases, or the critical tie for $w$ becomes better. The number of positive iterations does not exceed $2|E|+|W|$. 

Next we deal with a sequence $Q$ of consecutive non-positive iterations and consider a current iteration among these. At the 1st stage of this iteration, the sum of values of the current function $x$ increases (compared with the preceding $y$) by some amount $\xi>0$. Note that if for a vertex $w\in W^{\bullet=}$, the value increases on some edge $e=fw$, then this edge cannot belong to the head $H_w$ or to $\Eend_w$. (For otherwise the cutting operation at the 2nd stage of the current iteration decreases the value on $e$, and as a consequence, $e$ is added as a new element to the set $L_f$, which is impossible since the iteration is non-positive.)

It follows that an increase on some edge $e=fw$ at the 1st stage of an iteration from $Q$ is possible only in two cases: either $w\in W^{\bullet=}$ and $e$ is contained in the current tail $T_w$, or $w$ belongs to the current deficit set $W-W^{\bullet=}$. This implies that the total increase $\xi'$ on the edges incident to vertices in $W^{\bullet=}$ satisfies $\xi'\le\xi$ (the equality may happen only if there is no contribution from $\xi$ to the vertices in $W-W^{\bullet=}$). At the 2nd stage of this iteration, the total value in the heads $H_w$ for these vertices $w\in W^{\bullet=}$ must decrease by the same amount $\xi'$ (emphasize that these heads $H_w$ do not change, and each edge $fw$ where the value decreases has been added to the set $L_f$ on earlier iterations). Then we come to the 1st stage of the next iteration with the total ``quota-underloading'' $\xi'$ in some vertices in $F$, which further causes a ``quota-overloading'' $\xi''\le\xi'$ in $W^{\bullet=}$, and so on. When such ``overloadings'' decrease geometrically, non-positive iterations may continue infinitely long (yet providing a convergence to a limit function).

Relying on the observations above, we are now going to aggregate iterations from $Q$. More precisely, consider the first iteration in $Q$; assume that it is $i$-th iteration in the ordinary process. For brevity, we denote $x^i$ by $x$, and $y^i$ by $y$ (the functions obtained on the 1st and 2nd stages of the first iteration in $Q$). Also denote:
  %
  \begin{gather*}
 F^= :=F^{i=}\quad \mbox{and}\quad W^= :=W^{i=}; \\
  H_f:=H_f(x),\;\; h_f:=|H_f|,\;\; \mbox{and}\;\; a(y,f):=y(e) 
   \quad\mbox{for $f\in F^=$ and $e\in H_f$}; \\  
  H_w:=H_w(y),\;\; h_w:=|H_w|,\;\; \mbox{and}\;\; a(y,w):=y(e) 
   \quad\mbox{for $w\in W^=$ and $e\in H_w$}. 
  \end{gather*}

The  sequence $Q$ without the first iteration in it is replaced by one ``big'' (aggregated) iteration. To do so, we arrange the following linear program with variables $\phi_f$ for $f\in F^=$, and $\psi_w$ for $w\in W^=$:
  \begin{gather}
  \mbox{maximize\;\; $\eta(\phi):=\sum(h_f\phi_f\colon f\in F^=)$ \qquad\quad subject to}
     \label{eq:Alp1}  \\
  h_w\psi_w-\sum(\phi_f\colon f\in F^=,\, fw\in H_f)=0,\qquad w\in W^=;\label{eq:Alp2} \\  
  \qquad\qquad \qquad \psi_w\le a(y,w), \qquad\qquad\qquad\qquad w\in W^=; \label{eq:Alp3} \\
  h_f\phi_f-\sum(\psi_w\colon w\in W^=,\, fw\in L_f)\le q(f)-|y_f|,
           \quad f\in F^=;                           \label{eq:Alp4}\\
\qquad\qquad\qquad\qquad    \phi_f\le b(fw)-y(fw), \qquad\qquad\qquad f\in F^=,\; fw\in H_f; \label{eq:Alp5} \\
\qquad    \phi_f+\psi_w\le a(y,w)- y(fw), \qquad f\in F^=,\, w\in W^=,\, fw\in 
               \pic_w(y)-H_w; \label{eq:Alp6} \\
    \qquad  \sum(\phi_f\colon f\in F^=,\, fw\in E_w)\le q(w)-|y_w|, \qquad\qquad w\in W-W^=;
                  \label{eq:Alp7} \\    
   \phi\ge 0,\; \psi\ge 0. \qquad\label{eq:Alp8}
   \end{gather}

The set of feasible solutions to this system is nonempty (since we can take as $\phi,\psi$ the corresponding augmentations determined on the second iteration in $Q$) and compact (in view of~\refeq{Alp3},\refeq{Alp5},\refeq{Alp8}). Let $(\phi,\psi)$ be an optimal solution to~\refeq{Alp1}--\refeq{Alp8}, and let $\Delta=\Delta^{\phi,\psi}$ be the corresponding function on $E$, i.e., for $e=fw\in E$, $\Delta(e)$ is equal to $\phi_f$ if $f\in F^=$ and $e\in H_f$,\; $-\psi_w$ if $w\in W^=$ and $e\in H_w$, and 0 otherwise. 

Define $y':=y+\Delta^{\phi,\psi}$. Then, based on the properties of $y$, we observe the following: \refeq{Alp2} implies that all vertices in $W^=$ are fully filled for $y'$; \refeq{Alp3} implies that $y'$ is nonnegative; \refeq{Alp4} implies that $y'$ does not violate the quotas within $F^=$; \refeq{Alp5} implies that $y'$ does not exceed the upper bounds $b$; \refeq{Alp6} can be rewritten as $y'(e)=a(y,w)-\psi_w\ge y(fw)+\phi_f=y'(fw)$, where $e\in H_w$, implying that the edges in the head $H_w$ for $y$ belong to the head at $w$ for $y'$ as well; and~\refeq{Alp7} implies that $y'$ does not violate the quotas within $W-W^=$. 

One may assume that the optimal solution $(\phi,\psi)$ is chosen so that the set $\Lscr$ of inequalities in~\refeq{Alp3}--\refeq{Alp7} that turn into equalities is nonempty. One can see that if $\Lscr$ involves a constraint from~\refeq{Alp3},\refeq{Alp5}--\refeq{Alp7}, then some of the above-mentioned parameters changes. (Indeed, the equality with $w$ in~\refeq{Alp3} implies that $y'$ becomes zero on $H_w(y)$, the equality with $fw$ in~\refeq{Alp5} implies that $y'(fw)$ attains the upper bound $b(fw)$; the equality with $fw$ in~\refeq{Alp6} implies that the head at $w$ becomes larger; and the equality with $w$ in~\refeq{Alp7} implies that the vertex $w$ becomes fully filled.)

Suppose that none of the constraints in~\refeq{Alp3},\refeq{Alp5}--\refeq{Alp7} turns into equality, and therefore, the attained constraints concern only~\refeq{Alp4}. Then the resulting function $y'$ is well-defined, in the sense that it is suitable to applying further ordinary iterations. Let $x''$ be the function obtained by applying to $y'$ the 1st stage of the ordinary iteration. We observe that if $\Lscr$ involves all inequalities in~\refeq{Alp4}, then all vertices in $F^=$ are fully filled for $y'$; this implies that $x''=y'$, and the process terminates.

Now suppose that there is an inequality in~\refeq{Alp4} that is strict for $y'$, and therefore, the subset $F'$ formed by $f\in F^=$ with $|y'_f|<q(f)$ is nonempty. Let $y''$ be the functions obtained by applying to $x''$ the 2nd stage of the ordinary iteration. Then $\delta:=y''-y'$ is represented as $\Delta^{\phi',\psi'}$, where $\phi'$ is a function on $F^=$ such that $\phi'_f>0$ for each $f\in F'$. It follows that the pair $(\phi'',\psi''):=(\phi+\phi',\psi+\psi')$ gives a feasible solution to~\refeq{Alp1}--\refeq{Alp8}. But $\eta(\phi'')=\eta(\phi)+\eta(\phi')> \eta(\phi)$, contrary to the optimality of $(\phi,\psi)$.

Thus, the big iteration either terminates the whole process or changes at least one of the above parameters. Also in the modified method as described above, each big iteration is preceded by one ordinary iteration and there are no two consecutive ordinary non-positive iterations. Therefore, the whole process finishes after $O(|E|)$ iterations. Each big iteration is performed in time polynomial in $|E|$, by using a strongly polynomial l.p. method (see, e.g.,~\cite{schr}) to solve program~\refeq{Alp1}--\refeq{Alp8}. 

Let $\hat x$ be the resulting assignment. To see that $\hat x$ is stable, one can argue in a similar way as in the ordinary method (where a proof of stability is rather straightforward). More precisely, one can show by induction on the number of iterations that if an edge $e=fw$ is not saturated upon termination of a current iteration, then $e$ belongs to either the set $H_f\cup\Eend_f$ for $f\in F^=$, or to the set $H_w\cup\Eend_w$ for $w\in W^=$, whence the stability of $\hat x$ will follow. (Here $W^=,H_w,\Eend_w$ are objects arising at the end of the iteration, whereas $F^=,H_f,\Eend_f$ are those arising on the 1st stage when the iteration is ordinary, and are defined as above (before introducing system~\refeq{Alp1}--\refeq{Alp8})  when the iteration is big.) 

It remains to ask: whether the resulting $\hat x$ found by the modified algorithm does coincide with the $F$-optimal assignment $\xmin$? In essence this question is not so important to us, because once we are given one or another stable assignment, we can always transform it into $\xmin$ by use of a sequence of $O(|E|)$ rotational shifts, by considering the polar order $\succeq_W$ on $\Sscr$ and acting as in Sect.~\SEC{poset_rot}. 
As a result, we can conclude with the following
 \begin{prop} \label{pr:modif}
The above modified algorithm to find $\xmin$ is finite and consists of $O(|E|)$ iterations, each taking time polynomial in $|E|$.    \hfill$\qed$
  \end{prop}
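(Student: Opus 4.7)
The plan is to verify three claims: (a) the algorithm halts after $O(|E|)$ iterations, (b) each iteration runs in time polynomial in $|E|$, and (c) the final output is $\xmin$.

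For~(a), I would introduce a global monotone potential $\Phi$ aggregating observations~(i)--(iv) that precede the proposition: for instance, take $\Phi$ to be the number of saturated edges, plus $\sum_{f\in F}|L^i_f|$, plus $|W^{i=}|$, plus $\sum_{w\in W^{i=}}|H_w(y^i)|$, plus the number of pairs $(w,\pi)$ where $\pi$ is a tie in $\Pi_w$ strictly earlier than the current critical tie $\pic_w$. By construction $\Phi=O(|E|)$ throughout, and I would verify that $\Phi$ is non-decreasing along the whole sequence of iterations (both ordinary and big), and moreover strictly increases whenever one of the events defining a positive ordinary iteration occurs, or whenever a big iteration is executed without termination. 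The delicate step is the latter: using the case analysis sketched in the text, if some constraint among~\refeq{Alp3}, \refeq{Alp5}, \refeq{Alp6}, \refeq{Alp7} is tight at the optimum $(\phi,\psi)$, a direct inspection shows that one of the summands in $\Phi$ strictly increases; otherwise only~\refeq{Alp4} constraints can be tight, in which case either $x''=y'$ and the process terminates, or the feasible increment $(\phi',\psi')$ derived from the next ordinary iteration makes $(\phi+\phi',\psi+\psi')$ strictly improve the objective $\eta(\phi)$, contradicting optimality of $(\phi,\psi)$. Since the modified schedule alternates so that no two consecutive ordinary iterations are non-positive, the total iteration count is $O(|E|)$.

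For~(b), each ordinary iteration consists of $O(|V|)$ evaluations of choice functions $C_v$, costing $O(|E|)$ altogether. Each big iteration reduces to solving the linear program~\refeq{Alp1}--\refeq{Alp8} with $O(|V|)$ variables and $O(|E|)$ constraints, which is solvable in strongly polynomial time by a standard LP method as in~\cite{schr}.

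For~(c), I would first prove stability of the terminal assignment $\hat x$ by induction on the iteration count, maintaining the invariant that any non-saturated edge $e=fw$ satisfies either $f\in F^{i=}$ and $e\in H_f(x^i)\cup\Eend_f(x^i)$, or $w\in W^{i=}$ and $e\in H_w(y^i)\cup\Eend_w(y^i)$; this invariant is preserved under a big iteration precisely because constraints~\refeq{Alp3}--\refeq{Alp7} prevent any edge from becoming simultaneously non-saturated and outside the relevant head or end sets at both endpoints. Passing to $\hat x$ and invoking~\refeq{nonfill} yields stability. The remaining issue—whether $\hat x$ coincides with $\xmin$—is bypassed by post-processing, exactly as indicated in the text: starting from any stable $\hat x$, a sequence of $O(|E|)$ rotational shifts computed in the polar lattice $(\Sscr,\succeq_W)$ transforms $\hat x$ into $\xmin$ by Lemma~\ref{lm:2E}(ii) together with Proposition~\ref{pr:time}, each shift taking time polynomial in $|E|$. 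The main obstacle is the bookkeeping in step~(a), where the interplay between ordinary and big iterations must be tracked carefully to rule out infinite subsequences of non-improving big iterations; this is exactly what the monotonicity of $\Phi$, combined with the LP optimality argument, is designed to secure.
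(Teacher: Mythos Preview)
Your proposal is correct and follows essentially the same approach as the paper: the Appendix establishes the proposition through the discussion preceding it, counting ``positive'' events (bounded by $2|E|+|W|$), using the LP optimality argument to show each big iteration either terminates or triggers such an event, invoking the scheduling rule to bound total iterations, and then handling stability and the reduction to $\xmin$ via rotational post-processing exactly as you describe. Your explicit potential $\Phi$ is simply a convenient packaging of the paper's separate monotone parameters into a single counter; the underlying argument is identical.
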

  
\noindent\textbf{Remark 6.} ~As a (partial) alternative to the above method, we can offer an efficient and relatively simple ``combinatorial'' algorithm which, given an instance of SMP, decides whether a stable solution to it is quota filling (i.e. any stable solution attains the quotas for all vertices), and if does, finds a stable solution. To do so, the input graph $G=(F\sqcup W,E)$ is extended to graph $G'=(F'\sqcup W',E')$ by adding:

vertex $f_0$ (to $F$) and vertex $w_0$ (to $W$);

edges $f_0w$ for all $w\in W$, denoting the set of these edges by $A$;

edges $fw_0$ for all $f\in F$, denoting the set of these edges by $B$;

edge $f_0w_0$.

The capacities $b$ are extended to $E'-E$ as
  \begin{numitem1} \label{eq:Ep-E}
~$b(f_0w):=q(w)$, $w\in W$;\;\; $b(fw_0):=q(f)$, $f\in F$;\;\; and $b(f_0w_0):=\infty$,
  \end{numitem1}
and the quotas are extended as 
  \begin{numitem1} \label{eq:quotVp}
~$q(f_0):=\sum(q(w)\colon w\in W)=:Q_W$\; and\; $q(w_0):=\sum(q(f)\colon f\in F=:Q_F$.
  \end{numitem1}

The preferences are extended as follows (keeping the old orders $\ge_v$ within $G$):
  \begin{numitem1} \label{eq:pref_new}
\begin{itemize}
 \item[(a)] for $f\in F$ and all  $e\in E_f$, assign $fw_0\succ_f e$;
 \item[(b)] for $w\in W$ and all $e\in E_w$, assign $e\succ_w f_0w$;
 \item[(c)] assign $e\succ_{f_0} f_0w_0$ for all $e\in A$,\; and assign $f_0w_0\succ_{w_0} e'$ for all $e'\in B$;
  \item[(d)] assign arbitrary preferences on $A$ w.r.t. $f_0$, and on $B$ w.r.t. $w_0$.
   \end{itemize}
   \end{numitem1}

For convenience we will use notation with symbol $y$ for assignments on $E'$, and with $x$ for the restrictions of $y$ on $E$. Define $y_0$ on $E'$ as

  \begin{numitem1} \label{eq:y0}
~$y_0(e):=b(e)$ for all $e\in A$,\;  $y_0(e'):=b(e')$ for all $w\in W$, and 0 otherwise.
  \end{numitem1}
This $y_0$ is quota filling (in $G'$) and stable. Moreover~\refeq{pref_new}(a) ensures that $y_0$ is the best assignment for $F'$, i.e. $y_0=\ymin$. One can show the following
  \begin{lemma} \label{lm:GGp}
{\rm (i)} Let $x$ be a stable assignment for $G$ and suppose that $x$ is quota filling (which implies that $Q_F=Q_W$). Let $y$ be the extension of $x$ to $E'$ that takes zero values on $A\cup B$ and satisfies $y(f_0w_0)=Q_F$. Then $y$ is a stable assignment for $G'$.

{\rm (ii)} Let $y$ be a stable assignment for $G'$ and suppose that $y$ takes zero values on $A\cup B$. Then $x=y\rest{E}$ is a quota filling stable assignment for $G$.
 \end{lemma}
   \begin{proof}
~(i)  follows from the facts that: $y$ is quota filling (since $y(f_0w_0)=q(f_0)=q(w_0)$); and no edge in $E'-E=A\cup B\cup\{f_0w_0\}$ is blocking for $y$ (since (a),(b),(c) in~\refeq{pref_new} imply that: for each $w\in W$, $f_0w$ is the worst in $E'_w$; for each $f\in F$, $fw_0$ is worse than the unique edge $f_0w_0$ in $E'_{w_0}$ where $y$ is nonzero; and $f_0w_0$ is the worst in $E'_{f_0}$).

Now we show~(ii). Observe that any $w\in W$ is fully filled by $x$. For otherwise $w$ is deficit for $y$ (since $y(E'_w)=x(E_w)+y(f_0w)=x(E_w)<q(w)$), implying that the edge $e=f_0w$ is blocking for $y$ in both cases: $y(f_0w_0)<q(f_0)$, and $y(f_0w_0)=q(f_0)$ (in view of $e\succ_{f_0} f_0w_0$ and $q(f_0)=Q_W>0$). So we have $|x_w|=q(w)$ for all $w\in W$, and therefore,
  $$
  Q_W=\sum(x(E_w)\colon w\in W)=\sum(x(E_f)\colon f\in F)\le Q_F.
  $$

The case $Q_W<Q_F$ is impossible. For otherwise we would have $y(E'_{w_0})=y(f_0w_0)<q(w_0)=Q_F$ and $y(E'_f)=x(E_f)<q(f)$ for some $f\in F$, and therefore, the edge $fw_0$ would be blocking for $y$ (as it is unsaturated and connects two deficit vertices). Thus, $Q_W=Q_F$. This implies that $x$ is quota filling. Now the stability of $x$ easily follows.
   \end{proof}

Based on Lemma~\ref{lm:GGp}, we apply to $G'$ the rotational method to obtain $\ymax$, by drawing a route from $\ymin$ to $\ymax$. Let $x$ be the restriction of $y=\ymax$ to $E$. If $y$ takes zero values on $A\cup B$, then, by~(ii) in the lemma, $x$ is stable and quota filling. This implies that $x=\xmax$ (since $\xmax$ admits a proper extension to $G'$, according to~(i) in the lemma). And if $y$ is not identically zero on $A\cup B$, then we can declare that the stable assignments for $G$ are not quota filling; this again follows from~(i) in the lemma. 

Thus, we obtain an efficient algorithm, not appealing to linear programming, to recognize the existence of quota filling assignments for a given instance of SMP and find one of them. If desirable, we can further efficiently transform $\xmax$ into $\xmin$, by using ``reversed'' rotations in $G$.

It should be noted, however, that when the constructed $y$ is not identically zero on $A\cup B$, we cannot guarantee that the restriction $x=y\rest{E}$ is stable for $G$. In light of this, one can address the question of constructing an efficient ``combinatorial'' method to find a stable assignment in all cases of SMP (when deficit vertices are possible).

\end{document}